\newtheorem{theorem}{Theorem}[section]
\newtheorem{corollary}[theorem]{Corollary}
\newtheorem{lemma}[theorem]{Lemma}
\newtheorem{proposition}[theorem]{Proposition}
\newtheorem{conjecture}[theorem]{Conjecture}
\newtheorem{thm}{Theorem}[section]
\newenvironment{thmbis}[1]
  {%
   \addtocounter{thm}{-1}%
   \begin{thm}}
  {\end{thm}}
\newtheorem{conj}{Conjecture}[section]
\newenvironment{conjbis}[1]
  {%
   \addtocounter{conj}{-1}%
   \begin{conj}}
  {\end{conj}}
\newtheorem{tthm}{Theorem}[section]
\newenvironment{tthmbis}[1]
  {%
   \addtocounter{tthm}{-1}%
   \begin{tthm}}
  {\end{tthm}}
\newtheorem{cconj}{Conjecture}[section]
\theoremstyle{definition}
\newtheorem{remark}[theorem]{Remark}
\newtheorem{definition}[theorem]{Definition}
\newtheorem{example}[theorem]{Example}
\newtheorem{problem}[theorem]{Problem}
\newtheorem*{claim*}{Claim}
\newcommand\orilongtilde[2]{\vcenter{\hbox{%
\tikz[xscale=#1,yscale=#2]{
\fill plot[smooth] coordinates{
(1,0.061)(0.642,-0.052)(0.417,-0.094)(0.236,-0.094)
(0.097,-0.067)(-0.022,-0.027)(-0.144,0.012)
(-0.283,0.039)(-0.459,0.036)(-0.678,0)(-1,-0.097)
}
-- plot[smooth] coordinates{
(-1,-0.061)(-0.642,0.052)(-0.417,0.094)(-0.236,0.094)
(-0.097,0.067)(0.022,0.027)(0.144,-0.012)
(0.283,-0.039)(0.459,-0.036)(0.678,0)(1,0.097)
}
--cycle;}}}}
\newcommand\longtildeo[1]{%
\pgfmathparse{scalar(#1/2cm)}\let\bilitmp\pgfmathresult
\orilongtilde{\bilitmp}{0.6*\bilitmp^(1/3)}}
\newcommand\xlongtilde[2][]{%
  \mathrel{\mathop{%
    \setbox\tw@\vbox{\m@th
      \hbox{$\scriptstyle\mkern10mu{#1}$}%
      \hbox{$\scriptstyle\mkern10mu{#2}$}%
    }%
    {\longtildeo{\wd\tw@}}%
    \raise1.16ex\hbox{}%让\ht大点
    }%
  \limits
    \@ifnotempty{#2}{^{#2}}%
    \@ifnotempty{#1}{_{#1}}}%
}
\providecommand{\keywords}[1]{\textbf{\textbf{Keywords.}} #1}
\providecommand{\MSC}[1]{\textbf{\textbf{MSC 2020.}} #1}
\begin{document}

\title{Divisibility of Griesmer Codes}

\author{Haihua Deng\thanks{Research supported by the National Natural Science Foundation of China Grant No. 123B2011.}\quad Hexiang Huang\thanks{Research partially funded by National Key R\&D Program of China under Grant No. 2021YFA1001000, National Natural Science Foundation of China under Grant No. 12231014, and a Taishan scholar program of Shandong Province.}\quad Qing Xiang\thanks{Research partially supported by the National Natural Science Foundation of China Grant No. 12071206, 12131011.}
}

% \date{\today}

\maketitle

% \vspace{-0.8cm}
% \setcounter{tocdepth}{2}

\begin{abstract}
In this paper, we consider Griesmer codes, namely those linear codes meeting the Griesmer bound. Let $C$ be an $[n,k,d]_q$ Griesmer code with $q=p^f$, where $p$ is a prime and $f\ge1$ is an integer. In 1998, Ward proved that for $q=p$, if $p^e|d$, then $p^e|\mathrm{wt}(c)$ for all $c\in C$. In this paper, we show that if $q^e|d$, then $C$ has a basis consisting of $k$ codewords such that the first $\min\left\{e+1,k\right\}$ of them span a Griesmer subcode with constant weight $d$ and any $k-1$ of them span a $[g_q(k-1,d),k-1,d]_q$ Griesmer subcode. Using the $p$-adic algebraic method together with this basis, we prove that if $q^e|d$, then $p^e|\mathrm{wt}(c)$ for all $c\in C$. Based on this fact, using the geometric approach with the aforementioned basis, we show that if $p^e|d$, then $\Delta |{\rm wt}(c)$ for all $c\in C$, where $\Delta=\left\lceil p^{e-(f-1)(q-2)}\right\rceil$.
\end{abstract}

% \subjclass[2020]{11T06, 11T71}
\noindent\MSC{11T71; 11A07, 94B05, 94B65}\\
\noindent\keywords{Griesmer code, Divisible code, Algebraic method, Geometric approach.}

% \vspace{-0.1cm}
% \setcounter{tocdepth}{2}
% \vspace{-0.5cm}
% \setcounter{tocdepth}{2}
% \tableofcontents

\section{Introduction}

The notion of divisible codes was first introduced by Ward \cite{ward1981divisible} in 1981. Divisible codes are linear codes whose codeword weights share a common divisor larger than one. A linear code $C$ is called \textit{$\Delta$-divisible} if the weights $\mathrm{wt}(c)$ of all codewords $c\in C$ are divisible by some positive integer $\Delta$. In such a case, we call $\Delta$ a $\mathit{divisor}$ of $C$. There is an extensive survey on divisible codes by Kurz \cite{kurz2021divisible} (We will use some results from \cite{kurz2021divisible} later). In \cite{ward1981divisible}, Ward proved a fundamental theorem regarding divisible codes, which states that any $\Delta$-divisible code $C$ over $\mathbb{F}_q$ with $\gcd(\Delta,q)=1$ is equivalent to a $\Delta$-fold repetition of some code $C'$, supplemented by sufficiently many 0 coordinates. Therefore, $p$-power divisible codes over $\mathbb{F}_q$ will be of particular interest,  where $p$ is the characteristic of $\mathbb{F}_q$ (we will write $q=p^f$ throughout this paper). Many structural codes such as simplex codes, cyclic codes, and generalized Reed-Muller codes exhibit \textit{good divisibility}, meaning that all weights are divisible by a ``large" $p$-power; see \cites{ax1964zeroes,ward1990weight,ward2001divisible}. Griesmer codes, as a special class of length-optimal codes including the aforementioned simplex codes, also seem to exhibit good divisibility. 

We first recall the Griesmer bound. Let $C$ be an $[n,k,d]_q$ code. In 1960, Griesmer \cite{griesmer1960bound} proved that for binary linear codes (i.e., $q=2$), we have $n\ge\sum_{i=0}^{k-1}\left \lceil \frac{d}{2^i}  \right \rceil$. Later, in 1965, Solomon and Stiffler \cite{solomon1965algebraically} generalized this result, leading to the following theorem.

\begin{theorem}[The Griesmer Bound \cites{griesmer1960bound,solomon1965algebraically}]\label{Griesmer bound}
Let $C$ be an $[n,k,d]_q$ code. Then
\[n\ge\sum_{i=0}^{k-1}\left \lceil \frac{d}{q^i}  \right \rceil.   
\]   
\end{theorem}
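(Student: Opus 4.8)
The plan is to prove the bound by induction on the dimension $k$, with the inductive step driven by the \emph{residual code} construction. For the base case $k=1$, an $[n,1,d]_q$ code is spanned by a single codeword, every nonzero codeword of which has weight $d$, so $n \geq d = \lceil d/q^0 \rceil$, which is the claimed inequality.

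For the inductive step, I assume the bound holds in dimension $k-1$ and let $C$ be an $[n,k,d]_q$ code. I would first fix a codeword $c \in C$ of minimum weight $d$ and apply a monomial transformation (a permutation of coordinates followed by a rescaling of columns) so that $c = (1,\ldots,1,0,\ldots,0)$, with its $d$ nonzero entries occupying the first $d$ coordinates; this leaves the parameters $[n,k,d]_q$ unchanged. I then define the residual code $\mathrm{Res}(C,c)$ of length $n-d$ as the image of $C$ under the projection $\pi$ onto the last $n-d$ coordinates. The heart of the argument is to show that $\mathrm{Res}(C,c)$ is an $[n-d,\,k-1,\,d']_q$ code with $d' \geq \lceil d/q \rceil$.

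Both facts follow from a single averaging idea: for any $x \in C$, consider the full coset $\{x - \alpha c : \alpha \in \mathbb{F}_q\}$, all of whose members share the same image under $\pi$. On the first $d$ coordinates, the $i$-th entry of $x-\alpha c$ is $x_i - \alpha$, which is nonzero for exactly $q-1$ of the $q$ choices of $\alpha$; hence, summed over $\alpha$, the number of nonzero entries among the first $d$ coordinates equals $d(q-1)$. For the dimension, if $x \neq 0$ lies in $\ker\pi$ but is not a scalar multiple of $c$, then every $x-\alpha c$ is a nonzero codeword supported in the first $d$ coordinates, forcing the total of their weights to be at least $qd$, contradicting the value $d(q-1)$; thus $\ker\pi = \langle c \rangle$ and $\dim \mathrm{Res}(C,c) = k-1$. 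For the distance, given a nonzero $y = \pi(x)$, the same averaging produces an $\alpha^*$ for which $x-\alpha^* c$ has at most $\lfloor d(q-1)/q \rfloor = d - \lceil d/q \rceil$ nonzero entries among the first $d$ coordinates; since $y\neq 0$ guarantees $x-\alpha^* c \neq 0$, its weight is at least $d$, and removing the contribution from the first block yields $\mathrm{wt}(y) \geq \lceil d/q \rceil$.

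With the residual code in hand, the induction hypothesis gives $n-d \geq \sum_{i=0}^{k-2}\lceil d'/q^i \rceil \geq \sum_{i=0}^{k-2}\lceil \lceil d/q \rceil/q^i \rceil$. Applying the nested-ceiling identity $\lceil \lceil d/q \rceil / q^i \rceil = \lceil d/q^{i+1} \rceil$ and re-indexing converts the right-hand side into $\sum_{j=1}^{k-1}\lceil d/q^j \rceil$, so that $n \geq d + \sum_{j=1}^{k-1}\lceil d/q^j \rceil = \sum_{j=0}^{k-1}\lceil d/q^j \rceil$, completing the induction. I expect the main obstacle to be the distance bound $d' \geq \lceil d/q \rceil$ for the residual code: one must confirm both that the minimizing shift $\alpha^*$ exists (via the averaging count) and that $x-\alpha^* c$ is genuinely nonzero, so that the minimum-distance hypothesis on $C$ may legitimately be invoked; handling the interaction of the several ceiling functions cleanly is the only delicate piece of bookkeeping.
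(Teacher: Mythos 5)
Your proposal is correct and takes essentially the same route as the paper's first proof: induction on $k$ via the residual code with respect to a minimum-weight codeword, hinging on the lemma that $\mathrm{Res}(C,c)$ is an $[n-d,k-1,d']_q$ code with $d'\ge\lceil d/q\rceil$. Your averaging/pigeonhole count over the coset $\{x-\alpha c:\alpha\in\mathbb{F}_q\}$ is just a repackaging of the paper's bound on the agreement numbers $A_\alpha$ (both amount to counting nonzero coordinates on $\mathrm{supp}(c)$ across the $q$ shifts and invoking the minimum distance), so the two arguments are the same in substance.
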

\noindent The bound in Theorem \ref{Griesmer bound} will be called \textit{the Griesmer bound} and the quantity on the right-hand side of the above inequality will be denoted by $g_q(k,d)$; that is, $g_q(k,d):=\sum\limits_{i=0}^{k-1}\left \lceil \frac{d}{q^i}  \right \rceil$. Any $[n,k,d]_q$ code $C$ with code length meeting the Griesmer bound will be called a \textit{Griesmer code}.

Besides the simplex codes, there are many other examples of Griesmer codes, such as maximum distance separable (MDS) linear codes, the $1^{\text{st}}$ order Reed-Muller codes and Belov codes (see \cites{belov1974conjecture,dodunekov1986optimal}). We shall present two examples of Griesmer codes arising from finite geometry as follows.

\begin{example}
A \textit{unital} in $\mathrm{PG}(2,q^2)$ is a set $\mathcal{U}$ of $q^3+1$ points such that every line meets $\mathcal{U}$ in 1 or $q+1$ points. Let $G$ be a $3\times (q^3+1)$ matrix with columns $g_1,g_2,\ldots,g_{q^3+1}$ such that $\mathcal{U}:=\{\left\langle g_1 \right\rangle,\left\langle g_2 \right\rangle,\ldots, \left\langle g_{q^3+1} \right\rangle\}$ is a unital in $\mathrm{PG}(2,q^2)$. Then the linear code $C$ with $G$ as a generator matrix is a $[q^3+1,3,q^3-q]_{q^2}$ Griesmer code. Note that $C$ is actually a two-weight code with nonzero weights $q^3-q$ and $q^3$. 
\end{example}

\begin{example}
An \textit{ovoid} in $\mathrm{PG}(3,q)$ is a set $\mathcal{O}$ of $q^2+1$ points such that no three of them are collinear and every hyperplane meets $\mathcal{O}$ in 1 or $q+1$ points. Let $G$ be a $4\times (q^2+1)$ matrix with columns $g_1,g_2,\ldots,g_{q^2+1}$ such that $\mathcal{O}:=\{\left\langle g_1 \right\rangle,\left\langle g_2 \right\rangle,\ldots, \left\langle g_{q^2+1} \right\rangle\}$ is an ovoid in $\mathrm{PG}(3,q)$. Then the linear code $C$ with $G$ as a generator matrix is a $[q^2+1,4,q^2-q]_q$ Griesmer code. Note that $C$ is in fact a two-weight code with nonzero weights $q^2-q$ and $q^2$.    
\end{example}

Let $C$ be a $[g_q(k,d),k,d]_q$ Griesmer code. In 1984, Dodunekov and Manev \cite{dodunekov1984minimum} first showed that in the case when $q=2$, if $2^e|d$, then $2^e|\mathrm{wt}(c)$ for all $c\in C$. In 1998, Ward proved the following divisibility result on Griesmer codes.

\begin{theorem}[Theorem 1 in \cite{ward1998divisibility}]\label{Divisibility over F_p}
Let $C$ be a $[g_p(k,d),k,d]_p$ Griesmer code where $p$ is a prime. If $p^e|d$, then $p^e|\mathrm{wt}(c)$ for all $c\in C$.     
\end{theorem}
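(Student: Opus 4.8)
The plan is to recast the divisibility statement geometrically, then run an induction on the dimension $k$ using the residual code as the reduction step, and finally transfer divisibility back up through a coset-averaging identity. First I would translate the claim into the language of the projective multiset attached to a generator matrix $G$ of $C$: writing $c_u = uG$ for $u \in \mathbb{F}_p^k$, one has $\mathrm{wt}(c_u) = n - N_0(u)$, where $N_0(u)$ counts the columns $g_j$ of $G$ with $u \cdot g_j = 0$. Thus $p^e \mid \mathrm{wt}(c)$ for every $c \in C$ is equivalent to all hyperplane sections of the multiset being congruent to $n$ modulo $p^e$. The two base cases are immediate: for $k = 1$ the code is $[d,1,d]_p$ and every nonzero word has constant weight $d$, while the case $e = 0$ is vacuous.

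The inductive engine is the residual code. I would fix a minimum-weight codeword $c$, set $S = \mathrm{supp}(c)$ (so $|S| = d$), and let $C' = \mathrm{Res}(C,c)$ be the restriction of $C$ to the complement of $S$. Using the standard residual bound $d(C') \ge \lceil d/p\rceil = d/p$ together with the Griesmer identity $g_p(k,d) - d = g_p(k-1, d/p)$ (which follows from $\lceil \lceil d/p\rceil / p^{i}\rceil = \lceil d / p^{i+1}\rceil$ after reindexing), one sees that $C'$ is forced to be a $[g_p(k-1,d/p),\,k-1,\,d/p]_p$ Griesmer code, and a standard argument shows the restriction map $x \mapsto x|_{S^{\mathrm{c}}}$ has one-dimensional kernel $\langle c\rangle$, so $\dim C' = k-1$. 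Since $p^{e-1} \mid d/p$, the induction hypothesis on $k$ yields $p^{e-1} \mid \mathrm{wt}(x')$ for every $x' \in C'$.

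To recover the full power of $p$, I would exploit a coset-averaging identity. Each $x' \in C'$ is the image of the coset $\{x_0 + \lambda c : \lambda \in \mathbb{F}_p\}$, and because $c$ is nonzero at every coordinate of $S$, the entry $(x_0 + \lambda c)_i$ vanishes for exactly one value of $\lambda$ at each $i \in S$; summing weights over $\lambda$ therefore gives
\[
\sum_{\lambda \in \mathbb{F}_p} \mathrm{wt}(x_0 + \lambda c) = (p-1)d + p\,\mathrm{wt}(x').
\]
As $p^e \mid (p-1)d$ and $p^e \mid p\,\mathrm{wt}(x')$, the weights along every line through $x_0$ in the direction $c$ sum to $0$ modulo $p^e$.

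The hard part will be upgrading these line-sum congruences to divisibility of each \emph{individual} weight. Here I would induct additionally on $e$: the $(e-1)$-case already gives $p^{e-1}\mid \mathrm{wt}(x)$ for all $x \in C$, so writing $v(x) = (\mathrm{wt}(x)/p^{e-1}) \bmod p$ reduces the problem to proving $v \equiv 0$. Dividing the identity above by $p^{e-1}$ shows $\sum_{\lambda} v(x_0 + \lambda c) \equiv 0 \pmod p$ for every minimum-weight direction $c$, and $v$ already vanishes on minimum-weight codewords since $v(c) = d/p^{e-1} \equiv 0 \pmod p$. The crux is then to combine these line relations over a sufficiently rich supply of minimum-weight directions so as to force $v$ to be identically zero. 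This is precisely where a structured generating set is needed: using a basis whose initial segment spans a constant-weight-$d$ Griesmer subcode and each of whose $(k-1)$-element subsets spans a Griesmer subcode, one can connect an arbitrary codeword to the minimum-weight locus through a chain of lines on which the averaging identity applies, thereby pinning down each $v(x)$ to zero.
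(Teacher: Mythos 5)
Your preparatory steps are all correct, and they line up with tools the paper itself develops: the hyperplane-section recasting is Proposition \ref{weight}/Corollary \ref{congruence corollary}, the fact that $\mathrm{Res}(C,c)$ is a $[g_p(k-1,d/p),k-1,d/p]_p$ Griesmer code is Theorem \ref{residual code and shortened code}(1), your coset-averaging identity $\sum_{\lambda\in\mathbb{F}_p}\mathrm{wt}(x_0+\lambda c)=(p-1)d+p\,\mathrm{wt}(x')$ is a valid computation, and the double induction on $(k,e)$ is well-founded. For $p=2$ the plan even closes, since a line has only two points and relation (c) below degenerates to $v(x)=v(x+c)$, which genuinely propagates along chains (this is in essence the Dodunekov--Manev binary argument). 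The genuine gap is the step you yourself call the crux, and for odd $p$ it is fatal as stated. After your reductions, everything you know about $v$ is: (a) $v(0)=0$; (b) $v$ vanishes on minimum-weight codewords; (c) $\sum_{\lambda\in\mathbb{F}_p}v(x+\lambda c)\equiv 0\pmod p$ for every $x\in C$ and every minimum-weight $c$. These constraints do not imply $v\equiv 0$. A relation of type (c) pins down $v$ at a new point only if the other $p-1$ points of that line are already known zeros, so for $p\ge 3$ ``connecting $x$ to the minimum-weight locus through a chain of lines'' has no propagating force. A formal model makes this concrete: let $p=3$, identify the ambient space with $\mathbb{F}_3^2$, let the ``minimum-weight'' set be the nonzero points of $\langle e_1\rangle\cup\langle e_2\rangle$, and put $v(x_1,x_2)=x_1x_2$. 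Then (a) and (b) hold, and (c) holds for all $c=\pm e_1,\pm e_2$ because $\sum_{\lambda\in\mathbb{F}_3}(x_1\pm\lambda)x_2=3x_1x_2\pm x_2\sum_{\lambda}\lambda\equiv 0$, yet $v\not\equiv 0$. Hence no argument that uses only (a)--(c) plus connectivity can be complete; the proof must invoke some further structural fact about Griesmer codes that excludes this model.

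You do gesture toward such structure --- the basis whose initial segment spans a constant-weight-$d$ subcode and whose $(k-1)$-element subsets span Griesmer subcodes, i.e.\ Theorem \ref{A basis for Griesmer codes} --- but you never supply the mechanism converting it into vanishing of $v$, and that is where the entire difficulty of the theorem is concentrated. Even granting, by induction on $k$, that $v$ vanishes on each of the $k$ coordinate Griesmer subcodes of dimension $k-1$ and on the constant-weight subcode, the codewords involving all $k$ basis directions are still constrained only through $p$-term sum relations, which the model shows cannot force pointwise vanishing by themselves (in the model, a minimum-weight word in direction $e_1+e_2$ would give $\sum_{\lambda}v(x+\lambda c)\equiv 2\not\equiv 0$ and kill the example; so what is needed is precisely a supply of extra minimum-weight directions, or extra relations, that your argument does not establish). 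The paper, for exactly this reason, does not argue by line averaging: it obtains this statement as the $f=1$ case of Theorem \ref{First Theorem}, proved via Ward's $p$-adic divisibility criterion (Theorem \ref{divisibility criterion}) applied to the basis of Theorem \ref{A basis for Griesmer codes}, with the real work being the valuation estimates for the coefficients $c(r,s)$; and where the paper does argue geometrically (Theorem \ref{Second Theorem}), it replaces single lines by the pencil of all $q+1$ hyperplanes through a codimension-two subspace, uses divisibility inherited by hyperplane restrictions (Proposition \ref{Divisibility inherits}, Lemma \ref{Grisemer code restricted to a constant weight hyperplane}), and still needs Theorem \ref{First Theorem} as input in the low-dimensional case. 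One of these finer devices, or a substitute for them, is what your proposal is missing.
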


The above theorem shows that Griesmer codes over the prime field $\mathbb{F}_p$ have a good divisibility property. However, when the field size is $q=p^f$ $(f>1)$, in general, we cannot hope for a divisibility result as good as in the case when $q=p$; that is, if $C$ is a $[g_q(k,d),k,d]_q$ Griesmer code and $q^e|d$, $q^e$ may not divide $\mathrm{wt}(c)$ for all $c\in C$. For example, the \textit{hexacode} $H$ (see \cite{conway2013sphere} for definition) is a $[6,3,4]_4$ code containing 45 codewords of weight 4, 18 codewords of weight 6 and the zero codeword. The divisor of $H$ is 2 but not 4 although $4|d=4$. Despite this obstruction, Ward proved further divisibility results on Griesmer codes.

\begin{theorem}[\cite{ward2001divisible}]\label{evidences}
Let $C$ be a $[g_q(k,d),k,d]_q$ Griesmer code.
\begin{enumerate}
    \item If $q|d$, then $C$ is $p$-divisible.
    \item In the case when $q=4$, if $2^e|d$, then $2^{e-1}$ is a divisor of $C$.
\end{enumerate}
\end{theorem}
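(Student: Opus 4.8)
The plan is to pass to the projective realization of the code and to lean on the residual-code construction, which preserves the Griesmer property. Fix a generator matrix $G$ of $C$ and view its columns $g_1,\dots,g_n$, up to scalars, as a multiset $\mathcal{M}$ of $n=g_q(k,d)$ points of $\mathrm{PG}(k-1,q)$; for $x\in\mathbb{F}_q^k\setminus\{0\}$ the codeword $xG$ has weight $n-|\mathcal{M}\cap H_x|$, where $H_x$ is the hyperplane annihilated by $x$. Thus the assertions are statements that every intersection number $|\mathcal{M}\cap H|$ is congruent to $n$ modulo the prescribed power of $p$. For part (1) I would argue by induction on $k$: choosing a codeword $c$ of weight $d$ and passing to $\mathrm{Res}(C,c)$, the Griesmer equality forces $\mathrm{Res}(C,c)$ to be a $[g_q(k-1,\lceil d/q\rceil),\,k-1,\,\lceil d/q\rceil]_q$ Griesmer code, and the weights of $C$ are reconstructed from those of the residual code together with the geometry of $\mathrm{supp}(c)$. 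The first obstacle is that $q\mid d$ does not force $q\mid\lceil d/q\rceil$, so the induction does not close with the hypothesis as stated; I would therefore carry through a strengthened invariant recording the divisibility that $q\mid d$ actually imposes on each intersection number along the base-$q$ expansion of $d$.

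The arithmetic core, and the step I expect to decide the matter, is the $p$-adic (exponential-sum) method. Writing $\psi$ for a nontrivial additive character of $\mathbb{F}_q$ and $N_0(x)=|\mathcal{M}\cap H_x|$, orthogonality gives $qN_0(x)=n+\sum_{t\in\mathbb{F}_q^{\times}}\sum_{j=1}^{n}\psi\!\big(t\langle x,g_j\rangle\big)$, whence
\[
q\cdot\mathrm{wt}(xG)=(q-1)n-\sum_{t\in\mathbb{F}_q^{\times}}\sum_{j=1}^{n}\psi\!\big(t\langle x,g_j\rangle\big).
\]
I would then work in $\mathbb{Z}[\zeta_p]$ with $\pi=1-\zeta_p$ and estimate the $\pi$-adic valuation of the right-hand side through Stickelberger's theorem. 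The naive bound, coming from $\psi(\cdot)\equiv1\pmod{\pi}$, is vacuous; the real work is to show that the Griesmer extremality together with $q\mid d$ pushes the valuation up to the level $p^{f+1}\mid q\cdot\mathrm{wt}(xG)$, which is precisely $p\mid\mathrm{wt}(xG)$. Converting the tightness of the bound into the required valuation jump is the main obstacle, and this is where the refined intersection-number congruences from the first paragraph must be fed in; a reduction to Ward's prime-field Theorem~\ref{Divisibility over F_p} through a subfield or trace representation of $C$ could be used to anchor the base of the estimate.

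For part (2) the generic valuation bound is one power of $2$ too weak, so I would exploit the special arithmetic of $q=4$, where $p=2$, $f=2$ and the additive characters are real, taking values in $\{\pm1\}$. Here the quadratic structure of $\mathbb{F}_4$ over $\mathbb{F}_2$ introduces an extra cancellation in the character sum above beyond what the generic Stickelberger estimate detects, and combining this with the intersection-number congruences should upgrade the guaranteed divisor from the generic $2^{e-2}$ to $2^{e-1}$. The delicate point is to certify that this extra factor of $2$ occurs for every Griesmer code over $\mathbb{F}_4$, not merely on average; I would verify it hyperplane by hyperplane, computing the $2$-adic valuation of the real sum attached to the $\mathbb{F}_4$-structure and using the Griesmer equality to pin down the residual configuration.
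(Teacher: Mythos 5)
Your proposal is a plan rather than a proof: at every decisive point you explicitly defer the mathematical content. For part (1) you set up the multiset/hyperplane dictionary correctly and you correctly observe that the naive residual-code induction does not close (since $q\mid d$ does not give $q\mid\lceil d/q\rceil$), but the promised fix --- the ``strengthened invariant'' along the base-$q$ expansion of $d$ --- is never formulated, and the character-sum route ends with your own admission that ``converting the tightness of the bound into the required valuation jump is the main obstacle.'' That obstacle \emph{is} the theorem; nothing in the proposal overcomes it. Invoking Stickelberger's theorem here is moreover a non-sequitur: Stickelberger computes $\pi$-adic valuations of \emph{Gauss sums}, whereas the sums $\sum_{t\in\mathbb{F}_q^{\times}}\sum_{j}\psi\bigl(t\langle x,g_j\rangle\bigr)$ attached to an arbitrary Griesmer code carry no multiplicative-character structure, and the Griesmer hypothesis enters your valuation estimate nowhere. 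Part (2) rests entirely on the unsubstantiated assertion that the $\mathbb{F}_4$-structure produces ``extra cancellation'' upgrading $2^{e-2}$ to $2^{e-1}$; no lemma or computation supports this, and even the claimed generic bound $2^{e-2}$ is itself not established in your sketch.

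For calibration: this statement is quoted from Ward's survey, so the paper contains no proof of it to compare against line by line; but part (1) is exactly the case $e=1$ of Theorem \ref{First Theorem}, and the machinery that actually proves it is quite different from what you outline. One first builds the special basis of Theorem \ref{A basis for Griesmer codes} (via supplementary Griesmer subcodes, Proposition \ref{supplementary}, and minimum-weight preimages in residual codes, Proposition \ref{preimage of minimum weight codeword in Res(C,a)}), then feeds that basis as an additive spanning set into Ward's polarization divisibility criterion (Theorem \ref{divisibility criterion}), and finally controls the $p$-adic valuations of the coefficients $c(r,k)$ by Kummer's theorem. In that argument the place where Griesmer extremality enters is completely explicit --- through the constant-weight subcode $\mathrm{span}_{\mathbb{F}_q}\{a_1,\ldots,a_{e+1}\}$ guaranteed by Proposition \ref{constant weight} and through the Griesmer subcodes spanned by any $k-1$ basis vectors --- which is precisely the ingredient your sketch leaves as a black box. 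Your geometric setup and residual-code observations are sound and match the paper's preliminaries, but without the polarization criterion (or some substitute mechanism you would have to supply), neither part of the statement is proved.
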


Ward then proposed the following conjecture.

\begin{conjecture}[Conjecture 10 in \cite{ward2001divisible}]\label{Conjecture 1}
Let $C$ be a $[g_q(k,d),k,d]_q$ Griesmer code where $q=p^f$. Suppose that $p^e|d$ and $p^e\ge q$. Then $p^{e+1}/q$ is a divisor of $C$.    
\end{conjecture}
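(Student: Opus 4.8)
The plan is to recast the statement geometrically and prove it by induction on $k$, using the special basis to drive the induction and the $p$-adic Gauss-sum machinery (dually, the subspace decomposition of the associated point set) as the arithmetic engine. Identify $C$ with its projective system: let $\mathcal{M}$ be the multiset of $n=g_q(k,d)$ points in $\mathrm{PG}(k-1,q)$ given by the columns of a generator matrix. A nonzero codeword $c$ corresponds to a hyperplane $H$ with $\mathrm{wt}(c)=n-m(H)$, where $m(H)$ counts the points of $\mathcal{M}$ on $H$ with multiplicity. Writing $\Delta=p^{e+1}/q=p^{\,e+1-f}$, the conjecture is equivalent to the combinatorial assertion that $m(H)\equiv n\pmod{\Delta}$ for every hyperplane $H$, i.e.\ that $\mathcal{M}$ is $\Delta$-divisible. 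Since $p^e\mid d$ and $p^e\ge q$ we have $e_0:=\lfloor e/f\rfloor\ge1$ and $q^{e_0}\mid d$, so the paper's structure theorem (applied with $e_0$) furnishes a basis $c_1,\dots,c_k$ whose first $t:=\min\{e_0+1,k\}$ members span a constant-weight-$d$ Griesmer subcode and any $k-1$ of which again span a Griesmer subcode.

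First I would use that a linear code all of whose nonzero words have the same weight is a replicated simplex code, so the constant-weight-$d$ subcode $\langle c_1,\dots,c_t\rangle$ is an $s$-fold copy of the $[(q^t-1)/(q-1),t,q^{t-1}]_q$ simplex with $d=s\,q^{\,t-1}$; since $p^e\mid d$, the replication number $s$ is divisible by $p^{\,e-f(t-1)}$, and this ``sub-$q$'' $p$-power is exactly the part of the divisibility lying beyond the $q$-power range that must be extracted. For the induction, take the weight-$d$ word $c_1$: because $q\mid d$, the residual code $\mathrm{Res}(C,c_1)$ is a $[g_q(k-1,d/q),k-1,d/q]_q$ Griesmer code with $p^{\,e-f}\mid(d/q)$, so the induction hypothesis (when $e-f\ge f$) makes it $p^{\,(e-f)+1-f}=p^{\,e+1-2f}$-divisible. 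Splitting $\mathrm{wt}(c)=\mathrm{wt}_{\mathrm{off}}(c)+\mathrm{wt}_{\mathrm{on}}(c)$ along $\mathrm{supp}(c_1)$, the off-support part is a weight of the residual code and is already $p^{\,e+1-2f}$-divisible, so the whole problem reduces to a lifting step: showing the on-support contribution recovers the missing factor $p^{f}$ uniformly in $H$, which is where the replicated-simplex structure must be used.

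For the arithmetic of the base case and the lifting I would use the additive-character expansion
\[ m(H)=\frac{1}{q}\sum_{u\in\mathbb{F}_q}\sum_{P\in\mathcal{M}}\psi\bigl(u\langle a,P\rangle\bigr), \]
controlling the $p$-adic valuations of the resulting Gauss sums by Stickelberger's congruence; this is the route that already yields the paper's divisor $\lceil p^{\,e-(f-1)(q-2)}\rceil$. Dually, in the geometric picture one writes $\mathcal{M}$ (or its complement in a multiple of the whole space) as a sum of point sets of the subspaces occurring in the Solomon--Stiffler construction, each $r$-dimensional subspace contributing a simplex that is $q^{\,r-1}=p^{\,f(r-1)}$-divisible, and then argues that $p^e\mid d$ forces the smallest subspace dimension to be large enough for the combined multiset to be $p^{\,e+1-f}$-divisible. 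The two engines should agree, and I would play them against each other, using the geometry to locate cancellation and the Gauss sums to quantify it.

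The hard part is closing the gap between the Stickelberger estimate and the conjectured bound. For $f>1$ the digit-sum loss $(f-1)(q-2)$ in the Gauss-sum valuation far exceeds the loss $f-1$ the conjecture permits, so no term-by-term bound can succeed; one must produce genuine cancellation among the heavy Gauss-sum terms, and the only structural source of such cancellation in sight is the rigidity of the special basis---the constant-weight subcode together with the fact that every $k-1$ basis vectors again span a Griesmer code. Concretely, I expect the crux to be proving that $\mathrm{wt}_{\mathrm{on}}(c)$ is $p^{f}$-divisible relative to the residual congruence, uniformly over all hyperplanes, and that the base-$p$ carries arising when $f\nmid e$ do not spoil this. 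The hexacode $[6,3,4]_4$, where $4\mid d$ but $C$ is only $2$-divisible, shows that the full Griesmer extremality must enter in a global way, so any argument local to $\mathrm{supp}(c_1)$ is doomed; this is precisely the obstacle that separates the conjecture from the weaker bound $\lceil p^{\,e-(f-1)(q-2)}\rceil$ that the present methods secure.
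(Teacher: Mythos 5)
The statement you set out to prove is Conjecture \ref{Conjecture 1} (Ward's Conjecture 10 in \cite{ward2001divisible}): it is an \emph{open conjecture}, and the paper contains no proof of it. The paper proves only strictly weaker results --- Theorem \ref{First Theorem} (equivalently $p^{\lfloor e/f\rfloor}$-divisibility under $p^e\mid d$) and Theorem \ref{Second Theorem} (the divisor $\lceil p^{e-(f-1)(q-2)}\rceil$) --- together with a reduction of the conjecture to the case $k\le q-1$ (Theorem \ref{reduction of Ward's conjecture}); the authors moreover state that they believe the conjecture may be \emph{false} and suggest parameter ranges in which to search for counterexamples. So there is no proof in the paper to compare against, and your text, read as a proof, has genuine gaps that you yourself flag.

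Concretely, two steps fail. First, the residual-code induction does not actually reduce the problem. Writing $\mathrm{wt}(c)=\mathrm{wt}_{\mathrm{off}}(c)+\mathrm{wt}_{\mathrm{on}}(c)$ along $\mathrm{supp}(c_1)$, the inductive hypothesis controls $\mathrm{wt}_{\mathrm{off}}(c)$ only modulo $p^{e+1-2f}$, which is weaker than the target modulus $p^{e+1-f}$; no statement about $\mathrm{wt}_{\mathrm{on}}(c)$ alone can repair this, since what is needed is a \emph{joint} congruence for the two parts, and the promised ``lifting step recovering the factor $p^f$ uniformly in $H$'' is never constructed --- you concede as much when you observe that no term-by-term Gauss-sum/Stickelberger bound can beat the digit-sum loss $(f-1)(q-2)$, which is exactly the obstruction at which the paper's own method stops in Theorem \ref{Second Theorem}. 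Second, your dual ``geometric engine'' assumes that $\mathcal{M}$ decomposes as a sum of subspace point sets arising from the Solomon--Stiffler construction; this is false in general, since Griesmer codes need not be of Solomon--Stiffler type (the hexacode you cite, as well as the unital and ovoid examples in the paper, are not of this form), so that decomposition cannot serve as the basis of an argument. The ingredients you do use correctly --- the reformulation via Corollary \ref{congruence corollary}, the basis from Theorem \ref{A basis for Griesmer codes}, Bonisoli's theorem for the constant-weight subcode, and the residual-code parameters from Theorem \ref{residual code and shortened code} --- are precisely the paper's toolkit, but in the paper this toolkit provably yields only the weaker divisors, not the conjectured one.
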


In Section \ref{The Algebraic Method}, we will prove Theorem \ref{First Theorem} using the $p$-adic algebraic method, extending Theorem 1 of \cite{ward1998divisibility} and Proposition 13 of \cite{ward2001divisible}. In Section \ref{Geometric Approach}, we will prove Theorem \ref{Second Theorem} from the geometric point of view.

\begin{theorem}\label{First Theorem}
Let $C$ be a $[g_q(k,d),k,d]_q$ Griesmer code where $q=p^f$. If $q^e|d$, then $p^e | \mathrm{wt}(c)$ for all $c\in C$.
\end{theorem}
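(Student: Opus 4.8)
The plan is to combine the special basis from the abstract with the $p$-adic (Gauss sum) machinery, organised as an induction on the dimension $k$. Fix $C$ together with the basis $b_1,\dots,b_k$ whose existence is asserted above, and write a generic codeword as $c_a=\sum_{j=1}^k a_j b_j$ with $a=(a_1,\dots,a_k)\in\mathbb{F}_q^k$. When $k\le e+1$ we have $\min\{e+1,k\}=k$, so the whole of $C$ is the constant-weight-$d$ Griesmer subcode spanned by $b_1,\dots,b_k$; then every nonzero codeword has weight exactly $d$, and $q^e\mid d$ forces $p^e\mid d$, which settles the base case. For $k\ge e+2$ I would split codewords according to whether some coordinate $a_j$ vanishes: if $a_j=0$ for some $j$, then $c_a$ lies in the span of the remaining $k-1$ basis vectors, which by the basis property is a $[g_q(k-1,d),k-1,d]_q$ Griesmer code with the \emph{same} $d$ (so $q^e\mid d$ still holds), and the induction hypothesis gives $p^e\mid\mathrm{wt}(c_a)$. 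This reduces everything to the \emph{full-support} messages $a\in(\mathbb{F}_q^{*})^k$, which is the only case the algebraic method must treat.

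Second, for a full-support $a$ I would pass to the $p$-adic algebraic method. Let $\psi(x)=\zeta_p^{\mathrm{Tr}(x)}$ be the canonical additive character of $\mathbb{F}_q$; counting nonzero coordinates gives the identity
\[
q\,\mathrm{wt}(c_a)=(q-1)\,g_q(k,d)-\sum_{t\in\mathbb{F}_q^{*}}\sum_{i=1}^{n}\psi\!\big(t\langle a,v_i\rangle\big),
\]
where $v_1,\dots,v_n\in\mathbb{F}_q^k$ (with $n=g_q(k,d)$) are the columns of the generator matrix built from $b_1,\dots,b_k$. Working in the ring $\mathbb{Z}[\zeta_{q-1},\zeta_p]$ with the prime $\mathfrak{p}$ above $p$ normalised so that $v_{\mathfrak{p}}(p)=p-1$, the target $p^e\mid\mathrm{wt}(c_a)$ is equivalent to the valuation bound $v_{\mathfrak{p}}\big(q\,\mathrm{wt}(c_a)\big)\ge (e+f)(p-1)$, because $v_{\mathfrak{p}}(q)=f(p-1)$. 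Thus it suffices to show that the right-hand character sum, together with the term $(q-1)g_q(k,d)$, has $\mathfrak{p}$-adic valuation at least $(e+f)(p-1)$.

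Third, I would extract this valuation bound from the structure carried by the basis. Expanding each $\psi(t\langle a,v_i\rangle)$ through Gauss sums and invoking Stickelberger's theorem — which evaluates the $\mathfrak{p}$-adic valuation of a Gauss sum $g(\omega^{-j})$ as the base-$p$ digit sum $s_p(j)$ — converts divisibility statements about $d$ into valuation estimates on the individual summands. Here the first $\min\{e+1,k\}=e+1$ rows are decisive: they generate a one-weight (replicated simplex) code, so the projected column multiset $\{\bar v_i\}\subset\mathbb{F}_q^{e+1}$ consists of $\lambda$ copies of $\mathrm{PG}(e,q)$ with $d=\lambda q^e$, and this rigid distribution pins down enough digit sums to force the required valuation. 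The Griesmer bound being met with equality provides the remaining linear relations among the coordinate multiplicities $M_\beta=\#\{i:\langle a,v_i\rangle=\beta\}$ that are needed to push the estimate through, and the induction hypothesis may be fed back in to control the cross terms coming from the last $k-e-1$ coordinates.

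The main obstacle I anticipate is precisely the uniform $\mathfrak{p}$-adic valuation estimate of the twisted character sum in the third step, and in particular the honest bookkeeping of the factor $f$. For $f=1$ (Ward's original Theorem \ref{Divisibility over F_p}) the $1/q$ costs only $p-1$ in valuation and the estimate is comparatively tight; for general $q=p^f$ the division by $q$ costs $f(p-1)$, which is exactly why the hypothesis must be strengthened from $p^e\mid d$ to $q^e\mid d$ and why the conclusion weakens to $p^e$. Showing that the digit-sum (Stickelberger) contributions coming from the simplex subcode compensate this loss \emph{exactly} — neither more nor less — is the delicate heart of the argument, and keeping the two reductions (vanishing-coordinate codewords via induction, full-support codewords via Gauss sums) correctly interlocked is where the main care is required.
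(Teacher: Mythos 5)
Your overall frame is sound and partly parallels the paper: the induction on $k$, the base case $k\le e+1$ via the constant-weight subcode, and the reduction of codewords with a vanishing message coordinate to the induction hypothesis (legitimate, since the basis guarantees that any $k-1$ of the $b_j$ span a $[g_q(k-1,d),k-1,d]_q$ Griesmer code with the \emph{same} $d$) are all correct. The valuation bookkeeping in your second step is also right: with $v_{\mathfrak p}(p)=p-1$ one has $v_{\mathfrak p}(q)=f(p-1)$, and $p^e\mid\mathrm{wt}(c_a)$ is equivalent to $v_{\mathfrak p}\bigl(q\,\mathrm{wt}(c_a)\bigr)\ge (e+f)(p-1)$.

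The genuine gap is your third step, which is where the entire theorem lives, and which is asserted rather than proved. The identity $q\,\mathrm{wt}(c_a)=(q-1)n-\sum_{t\ne 0}\sum_i\psi\bigl(t\langle a,v_i\rangle\bigr)$ is tautological: the character sum \emph{equals} $(q-1)n-q\,\mathrm{wt}(c_a)$, so bounding its $\mathfrak p$-adic valuation is literally the statement to be proved, and nothing in the full-support reduction feeds into the Gauss-sum expansion (the expansion does not see the support of $a$ in any exploitable way). Once you expand via Gauss sums and Stickelberger, what you arrive at is, in essence, Ward's polarization criterion: one must show
\begin{align*}
e\le\frac{1}{p-1}\sum_{i} S_p(r_i)-f+\nu_p\Bigl(\sigma\bigl(T(b_{1})^{\circ(r_1)}\circ\cdots\circ T(b_{m})^{\circ(r_m)}\bigr)\Bigr)
\end{align*}
for \emph{all} exponent tuples with $\sum_i r_i\equiv 0\pmod{q-1}$, and the claim that the replicated-simplex structure of the first $e+1$ coordinates ``pins down enough digit sums'' is precisely the estimate that needs an argument. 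In the paper this is the bulk of the work: an induction organized over the exponent tuples, with (i) the case $r_1=q-1$ handled by applying the dimension induction to the residual and projected codes of $a_1+\alpha a_2$, (ii) the case $r_1+r_2\ge q-1$ untangled by a Vandermonde system over $\beta\in\mathbb{F}_q^{\times}$ together with the fact that $\nu_p\bigl(c(q-1,i)\bigr)=0$, and (iii) a backwards induction on $r_1$ that hinges on the exact evaluation $\nu_p\bigl(c(r+p^j,p^j)\bigr)=\nu_p\binom{r+p^j}{p^j}$, itself a nontrivial Kummer-type digit analysis. Your proposal supplies neither these ingredients nor any substitute for them, and you concede as much when you call this step ``the delicate heart''; as it stands, the proof stops exactly where the difficulty begins.
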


\begin{theorem}\label{Second Theorem}
Let $C$ be a $[g_q(k,d),k,d]_q$ Griesmer code where $q=p^f$. If $p^e|d$, then $\left\lceil p^{e-(f-1)(q-2)}\right\rceil$ is a divisor of $C$.     
\end{theorem}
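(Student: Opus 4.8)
The plan is to argue by induction on the dimension $k$, using the special basis to split every codeword into one of two types -- codewords lying in a $(k-1)$-dimensional Griesmer subcode, and ``full-support'' codewords -- and then to treat the full-support case by a counting (geometric) argument anchored at Theorem \ref{First Theorem}. First I would dispose of the trivial ranges: when $f=1$ the assertion is exactly Ward's Theorem \ref{Divisibility over F_p}, and when $e\le(f-1)(q-2)$ the exponent $e-(f-1)(q-2)$ is nonpositive, so $\Delta=1$ and there is nothing to prove. Hence I may assume $f\ge 2$ and $e>(f-1)(q-2)$; since $(f-1)(q-2)\ge f-1$ this forces $e\ge f$, so $q\mid d$. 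Let $a$ be the largest integer with $q^{a}\mid d$ (so $a\ge 1$). Theorem \ref{First Theorem} then provides the arithmetic input that $C$ is $p^{a}$-divisible, i.e. every weight is a multiple of $p^{a}$. Throughout I work in the geometric dictionary: fixing the special basis as the rows of a generator matrix identifies the columns with a multiset $\mathcal{M}$ of points of $\mathrm{PG}(k-1,q)$, so that the weight of the codeword indexed by a hyperplane $H$ equals $n-|\mathcal{M}\cap H|$.

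For the induction, the base case $k\le a+1$ is immediate: by the structural basis theorem (applied with $q^{a}\mid d$) the first $\min\{a+1,k\}=k$ basis codewords span a constant-weight-$d$ code, which is then all of $C$, and $\Delta\mid d$ because $p^{\,e-(f-1)(q-2)}\mid p^{e}\mid d$. For the inductive step assume the result in dimension $k-1$ and let $c_1,\dots,c_k$ be the special basis. Deleting any single $c_i$ leaves $k-1$ codewords spanning a $[g_q(k-1,d),k-1,d]_q$ Griesmer subcode \emph{with the same minimum distance} $d$, hence with the same $\Delta$; by the inductive hypothesis each such subcode is $\Delta$-divisible. Consequently every codeword $c=\sum_i\lambda_i c_i$ with some $\lambda_i=0$ lies in one of these subcodes and is $\Delta$-divisible. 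This reduces the theorem to the \emph{full-support} codewords, those with all $\lambda_i\neq 0$.

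To handle a full-support codeword $c$ I would exploit the constant-weight simplex subcode $D_0=\langle c_1,\dots,c_{a+1}\rangle$ supplied by the basis theorem, together with coset-sum identities. A direct count over the coset $c+D_0$ gives
\[
\sum_{b\in D_0}\mathrm{wt}(c+b)=q^{a}\bigl(q(n-z)-n_0\bigr),
\]
where $n_0$ is the effective length of $D_0$ and $z$ is the number of coordinates outside the support of $D_0$ at which $c$ vanishes; in particular this coset-sum is divisible by $q^{a}=p^{fa}$, and since $fa\ge e-f\ge e-(f-1)(q-2)$ it is divisible by $\Delta$. The members of $c+D_0$ having a vanishing basis coordinate are $\Delta$-divisible by the previous paragraph, so the \emph{sum} of the weights of the full-support members of the coset is $\equiv 0\pmod{\Delta}$. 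Intersecting this with the analogous relations from all sub-simplices of $D_0$ -- the pencil relation $\sum_{t\in\mathbb{F}_q}\mathrm{wt}(c+t c_i)=q\,m-\mathrm{wt}(c_i)$ being the one-dimensional case -- and feeding in the uniform $p^{a}$-divisibility from Theorem \ref{First Theorem}, lets me propagate divisibility from the boundary into the full-support region one coordinate at a time.

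The hard part is precisely this last propagation. The coset relations only pin down the various marginal sums of the weight function over $c+D_0$, and these under-determine the individual full-support weights, so converting ``the sum is $\Delta$-divisible'' into ``each summand is $\Delta$-divisible'' requires a careful $p$-adic bookkeeping rather than linear algebra alone. I expect the defect to be absorbed level by level: each of the $f-1$ non-prime ``levels'' of $\mathbb{F}_q/\mathbb{F}_p$ should cost at most $q-2$ in the $p$-adic valuation when one solves a $q$-term pencil relation for a single weight, which is what would yield the loss $(f-1)(q-2)$ and hence the divisor $\lceil p^{\,e-(f-1)(q-2)}\rceil$. This should be compared with Conjecture \ref{Conjecture 1}, which predicts the smaller loss $f-1$; the extra factor $q-2$ is the price of the present counting method, and sharpening the propagation step is the natural place where the bound could be improved.
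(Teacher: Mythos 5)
Your frame --- the trivial range $e\le(f-1)(q-2)$, induction on $k$, and the reduction via the basis of Theorem \ref{A basis for Griesmer codes} to ``full-support'' codewords, every codeword with a vanishing basis coefficient lying in a $(k-1)$-dimensional Griesmer subcode handled by the induction hypothesis --- agrees with the first half of the paper's inductive step. But writing $\Delta=\left\lceil p^{e-(f-1)(q-2)}\right\rceil$, the proposal has a genuine gap exactly at the step you yourself call ``the hard part'': nothing converts ``the sum of the full-support weights over a coset of $D_0$ is $\equiv 0\pmod{\Delta}$'' into ``each full-support weight is $\equiv 0\pmod{\Delta}$''. Your coset-sum and pencil identities are correct, but they involve quantities you never control modulo $\Delta$ (the count $z$ of zero coordinates of $c$ off the support of $D_0$, and in the pencil relation the quantity $m=|\mathrm{supp}(c)\cup\mathrm{supp}(c_i)|$, equivalently the codimension-two count $\mathcal{M}(H_c\cap H_{c_i})$), and, as you concede, the marginal sums under-determine the individual weights. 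The proposed rescue --- a ``level by level'' absorption costing $q-2$ per level over $f-1$ levels --- is a heuristic reverse-engineered from the exponent in the statement, not a mechanism; so the theorem is not proved.

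The paper closes precisely this hole with two ingredients absent from your proposal, and the loss $(f-1)(q-2)$ enters elsewhere than your heuristic suggests. First, when $(f-1)(q-2)<e<f(q-2)$ no geometry is needed at all: Theorem \ref{First Theorem} gives $p^{\lfloor e/f\rfloor}$-divisibility, and in this regime $\lfloor e/f\rfloor\ge e-(f-1)(q-2)$; this regime is the true source of the exponent loss, while the geometric induction itself is loss-free. Second, when $e\ge f(q-2)$, so that $q^{q-2}\mid d$: for $k\le q-1$ the code has constant weight by Proposition \ref{constant weight}; for $k\ge q$ and a hyperplane $H_u$ with all coefficients nonzero, the paper takes the pencil of $q+1$ hyperplanes through a codimension-two subspace $H''$ chosen so that \emph{all} $q$ other members have some vanishing coordinate (this is where $k\ge q$ is used), hence individually satisfy the congruence by the dimension induction, and so that one of them, $H_q$, is the hyperplane of a \emph{minimum-weight} codeword --- minimum weight because the first $q-1$ basis vectors span a constant-weight code. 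Lemma \ref{Grisemer code restricted to a constant weight hyperplane} then identifies $\mathcal{M}|_{H_q}$ with the Griesmer multiset of a residual code of distance $d/q$, so the induction hypothesis applied \emph{with distance $d/q$} --- a case your fixed-$d$ induction never supplies --- yields $\mathcal{M}(H'')\equiv n\pmod{\left\lceil\Delta/q\right\rceil}$, and the coefficient $q$ in the identity $n=\mathcal{M}(H_u)+\sum_{i=1}^q\mathcal{M}(H_i)-q\,\mathcal{M}(H'')$ exactly compensates the weakened modulus, isolating $\mathcal{M}(H_u)\equiv n\pmod{\Delta}$ outright. Without a pencil all of whose other members are individually controlled, and without the $d/q$ form of the induction to control the codimension-two count, your sum relations cannot be disentangled; that is where the proposal fails.
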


Note that Conjecture \ref{Conjecture 1} and Theorem \ref{First Theorem} can be rephrased as follows. 

\begin{conjbis}{Conjecture 1}
Let $C$ be a $[g_q(k,d),k,d]_q$ Griesmer code where $q=p^f$. If $p^e|d$, then $\left\lceil p^{e-(f-1)} \right\rceil$ is a divisor of $C$.  
\end{conjbis}

\begin{thmbis}{First Theorem}
Let $C$ be a $[g_q(k,d),k,d]_q$ Griesmer code where $q=p^f$. If $p^e|d$, then $p^{\lfloor e/f\rfloor} | \mathrm{wt}(c)$ for all $c\in C$.
\end{thmbis}

The above reformulation of Conjecture \ref{Conjecture 1} makes it clear that Theorem \ref{Second Theorem} can be viewed as a similar, but weaker result compared with Conjecture \ref{Conjecture 1}. These results provide new information about Griesmer codes, which will be helpful in searching for Griesmer codes or proving the nonexistence of Griesmer codes with certain parameters. For nonexistence proofs of certain Griesmer codes, see, e.g. \cites{ward2004sequence,maruta2004nonexistence,kawabata2022nonexistence}.

The rest of this paper is organized as follows: In Section \ref{The Geometric Approach} and Section \ref{Two proofs of the Griesmer bound}, we will give some preliminaries and present two proofs of the Griesmer bound, both of which will be useful later. In Section \ref{A Basis for Griesmer codes section}, we will show that any $[g_q(k,d),k,d]_q$ Griesmer code with $q^e|d$ has a basis consisting of $k$ codewords such that the first $\min\{e+1,k\}$ of them span a Griesmer subcode with constant weight $d$ and any $k-1$ of them span a $[g_q(k-1,d),k-1,d]_q$ Griesmer subcode. In Section \ref{The Algebraic Method}, using the $p$-adic algebraic method together with this basis, we apply a divisibility criterion given by Ward \cite{ward1990weight} to prove Theorem \ref{First Theorem}. Based on Theorem \ref{First Theorem}, again using the basis mentioned above, we prove Theorem \ref{Second Theorem} from the geometric point of view.

\section{Preliminaries}\label{The Geometric Approach}

\subsection{Residual Codes, Projected Codes and Shortened Subcodes}

First, we review some basic definitions.

The \textit{support} of a word $c=(c_1,\ldots,c_n)\in\mathbb{F}_q^n$, denoted by $\mathrm{supp}(c)$, is the set of nonzero coordinates of $c$, i.e., $\mathrm{supp}(c):=\{i \mid c_i\ne0\}$.    

The \textit{residual code} of $C\subseteq\mathbb{F}_q^n$ with respect to a word $a\in \mathbb{F}_q^n$, denoted by $\mathrm{Res}(C,a)$, is the code obtained by puncturing $C$ at $\mathrm{supp}(a)$. The codeword $\mathrm{res}(c,a)\in \mathrm{Res}(C,a)$ denotes the codeword obtained by puncturing $c\in C$ at $\mathrm{supp}(a)$. In other words,
\begin{align*}
\mathrm{Res}(C,a) &= \{ (c_j)_{j \notin \mathrm{supp}(a)} \mid c = (c_1,\ldots,c_n) \in C \},\\
\mathrm{res}(c,a) &= (c_j)_{j \notin \mathrm{supp}(a)}, \quad \forall c \in C.
\end{align*}
\par The \textit{projected code} of $C\subseteq\mathbb{F}_q^n$ with respect to a word $a\in \mathbb{F}_q^n$, denoted by $\mathrm{Proj}(C,a)$, is the code obtained by projecting $C$ at $\mathrm{supp}(a)$. The codeword $\mathrm{proj}(c,a)\in \mathrm{Proj}(C,a)$ denotes the codeword obtained by projecting $c\in C$ at $\mathrm{supp}(a)$. In other words,
\begin{align*}
\mathrm{Proj}(C,a) &= \{ (c_j)_{j \in \mathrm{supp}(a)} \mid c = (c_1,\ldots,c_n) \in C \},\\
\mathrm{proj}(c,a) &= (c_j)_{j \in \mathrm{supp}(a)}, \quad \forall c \in C
\end{align*}

For $1\le i\le n$, the \textit{shortened subcode} $C_i$ is the subcode of $C$ consisting of codewords in $C$ such that the $i$-th coordinate is 0, i.e., 
\begin{align*}
C_i:=\{c=(c_1,\ldots,c_n)\in C \mid c_i=0\}.    
\end{align*}

\begin{remark}
Note that residual codes, projected codes and shortened subcodes of linear codes are still linear codes.    
\end{remark}

\subsection{The Geometric Setting}

The geometric approach towards linear codes embodies geometric intuition and can allow us to prove some interesting results. It is worth noting that Landjev \cite{Landjev2001} used the polynomial method from the geometric point of view to reprove Theorem \ref{Divisibility over F_p}. We will use the geometric approach to prove Theorem \ref{Second Theorem}. First, we recall the geometric setting in the classical projective space $\mathrm{PG}(k-1,q)$ below. 

Let $V=\mathbb{F}_q^k$. The classical projective space $\mathrm{PG}(k-1,q)$ is defined as the geometry in which the point set $\mathcal{P}$ is the set of all 1-dimensional subspaces of $V$, the line set $\mathcal{L}$ is the set of all 2-dimensional subspaces of $V$, ..., the set $\mathcal{H}$ of hyperplanes is the set of all $(k-1)$-dimensional subspaces of $V$, where the incidence relation is the natural inclusion of subspaces.

\begin{definition}
A \textit{multiset} $\mathcal{M}$ (of points) in $\mathrm{PG}(k-1,q)$ is a collection of points in $\mathrm{PG}(k-1,q)$ with multiplicity.     
\end{definition}

We may regard vectors as points if there is no ambiguity. Sometimes it is advantageous to use the definition of a multiset $\mathcal{M}$ of points in $\mathrm{PG}(k-1,q)$ as follows.

\begin{definition}
A \textit{multiset} $\mathcal{M}$ (of points) in $\mathrm{PG}(k-1,q)$ is a mapping $\mathcal{M}:\mathcal{P}\to\mathbb{N}_0:=\mathbb{N}\cup\{0\}$.      
\end{definition}

For each point $P$, the number $\mathcal{M}(P)$ is the multiplicity of the point $P$ in the multiset $\mathcal{M}$. If $\mathcal{M}(P)\in\{0,1\}$ for all points $P\in\mathcal{P}$, then $\mathcal{M}$ is simply a set. We can also add two multisets $\mathcal{M}_1$ and $\mathcal{M}_2$ (regard them as functions) to obtain a multiset $\mathcal{M}_1+\mathcal{M}_2$. For any subset $S\subseteq \mathcal{P}$, we denote $\mathcal{M}(S):=\sum_{P\in S}\mathcal{M}(P)$ and the multiset $\mathcal{M}|_S:\mathcal{P}\to\mathbb{N}_0$ by
\begin{align*}
\mathcal{M}|_S(P):=\begin{cases}
 \mathcal{M}(P), & \text{if } P\in S, \\
 0, & \text{otherwise.}
\end{cases}
\end{align*}

Let $C$ be an $[n,k]_q$ code and $G$ be a generator matrix of $C$. The \textit{effective length} of the linear code $C$, denoted by $n(C)$, is the number of nonzero coordinates of $C$ (or the number of nonzero columns of its generator matrix $G$). If $G$ contains no zero columns, then we will say that $C$ has full length. In such a situation, we may identify $C$ with the multiset $\mathcal{M}$ corresponding to all the columns of a generator matrix $G$ of $C$ since different generator matrices of $C$ will give rise to the ``same" multiset up to equivalence. Formally speaking, two multisets in $\mathrm{PG}(k-1,q)$ are said to be \textit{projectively equivalent} if they are associated with linear codes that are semilinearly isomorphic. We will say that the multiset $\mathcal{M}$ is $\Delta$-divisible when $C$ is $\Delta$-divisible. We will write $\mathcal{M}_C$ to indicate the multiset corresponding to a linear code $C$, or simply $\mathcal{M}$ when $C$ is clear from the context.  Throughout, we always assume that an $[n,k,d]_q$ code $C$ has full length.

Under the assumptions stated above, we can utilize this geometric framework to express the weight $\mathrm{wt}(c)$ of a codeword $c=aG$, where $a$ is a row vector, in a linear code $C$ with a generator matrix $G$ in the following way. 

\begin{proposition}\label{weight}
Let $C$ be an $[n,k]_q$ code with a $k\times n$ generator matrix $G$. Then any codeword $c\in C$ can be uniquely written as $c=aG$ for an $a\in\mathbb{F}_q^k$. Moreover, for $a=0$, $c$ is the zero codeword and so $\mathrm{wt}(c)=0$; for any nonzero $a$, we have
\begin{align*}
\mathrm{wt}(c)=n-\mathcal{M}(H_a),    
\end{align*}
where $\mathcal{M}$ is the multiset corresponding to the columns of $G$ and $H_a:=a^{\perp}$ is the hyperplane consisting of vectors that are orthogonal to $a$.
\end{proposition}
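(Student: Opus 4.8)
The plan is to reduce the statement to the observation that the $i$-th coordinate of $c=aG$ is the value of a linear functional determined by $a$ on the $i$-th column of $G$, and then to count the vanishing coordinates geometrically. First I would dispose of existence and uniqueness: since $G$ is a generator matrix of the $[n,k]_q$ code $C$, its $k$ rows form a basis of $C$, so $G$ has full row rank $k$ and the $\mathbb{F}_q$-linear map $\mathbb{F}_q^k\to C$, $a\mapsto aG$, is a bijection. Hence every $c\in C$ equals $aG$ for a unique $a\in\mathbb{F}_q^k$, and $a=0$ corresponds precisely to the zero codeword, for which $\mathrm{wt}(c)=0$.

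Next, writing $g_1,\dots,g_n$ for the columns of $G$, the $i$-th entry of $c=aG$ is the standard inner product $a\cdot g_i$. Fix a nonzero $a$, so that $H_a=a^{\perp}$ is a genuine hyperplane, i.e.\ a $(k-1)$-dimensional subspace of $V=\mathbb{F}_q^k$. The key step is the chain of equivalences
\[
c_i=0 \iff a\cdot g_i=0 \iff g_i\in H_a,
\]
and, since $H_a$ is a subspace, the last condition is the same as saying that the point $\langle g_i\rangle$ of $\mathrm{PG}(k-1,q)$ is incident with the hyperplane $H_a$. Because we assume throughout that $C$ has full length, no column $g_i$ is zero, so every column determines a well-defined point of $\mathrm{PG}(k-1,q)$ and the passage from columns to points is faithful and respects multiplicities.

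It then remains to count. The number of zero coordinates of $c$ equals the number of indices $i$ with $g_i\in H_a$, which, counted with multiplicity, is exactly $\mathcal{M}(H_a)=\sum_{P\in H_a}\mathcal{M}(P)$ by the definition of the multiset $\mathcal{M}$ attached to the columns of $G$. Subtracting this from the total number $n$ of coordinates gives $\mathrm{wt}(c)=n-\mathcal{M}(H_a)$, as claimed.

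I do not anticipate any genuine obstacle here, as the result is essentially a restatement of the dictionary between linear codes and multisets of points. The only points requiring care are the bookkeeping between ``the vector $g_i$ lies in the subspace $H_a$'' and ``the projective point $\langle g_i\rangle$ is incident with the hyperplane $H_a$,'' and the explicit use of the full-length hypothesis to ensure that each column contributes a well-defined point carrying the correct multiplicity in $\mathcal{M}$.
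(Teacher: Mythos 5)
Your proof is correct. Note that the paper states Proposition \ref{weight} without any proof at all, treating it as part of the standard dictionary between full-length linear codes and multisets of points in $\mathrm{PG}(k-1,q)$; your argument (columns as points, $c_i=0\iff g_i\in H_a$, counting with multiplicity via the full-length hypothesis) is exactly the standard justification the paper implicitly relies on, and it fills in the details cleanly.
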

Geometrically, the weight of $c=aG\in C$ is equal to the number of points of $\mathcal{M}$ not in $H_a$. This proposition establishes the connection between the weights of codewords in a linear code $C$, and the intersection sizes of hyperplanes with the multiset corresponding to a generator matrix $G$ of $C$. As a corollary, we obtain the following result.

\begin{corollary}\label{congruence corollary}
Let $C$ be an $[n,k]_q$ code. Then $C$ is $\Delta$-divisible if and only if 
\begin{align*}
\mathcal{M}_C(H)\equiv n \pmod{\Delta}    
\end{align*}
for all hyperplanes $H\in\mathcal{H}$ in $\mathrm{PG}(k-1,q)$.
\end{corollary}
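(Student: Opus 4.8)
The plan is to read off the stated congruence directly from the weight formula of Proposition \ref{weight}, so the corollary is essentially a restatement of that proposition. The key preliminary observation is that the assignment $a \mapsto H_a = a^{\perp}$ sets up a correspondence between the nonzero vectors of $V = \mathbb{F}_q^k$ and the hyperplanes of $\mathrm{PG}(k-1,q)$: every $(k-1)$-dimensional subspace of $V$ is the orthogonal complement of some nonzero $a$, so every hyperplane $H$ can be written as $H_a$; moreover, replacing $a$ by a nonzero scalar multiple $\lambda a$ leaves $H_a$ unchanged while rescaling the codeword $aG$ to $\lambda(aG)$, which has the same support and hence the same weight. Thus both $\mathcal{M}(H)$ and $n - \mathcal{M}(H)$ depend only on the hyperplane $H$, and as $a$ ranges over all nonzero vectors, $H_a$ ranges over all hyperplanes.

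Granting this, I would prove the two implications as follows. For the forward direction, I would assume $C$ is $\Delta$-divisible and fix an arbitrary hyperplane $H$. Writing $H = H_a$ for some nonzero $a$, Proposition \ref{weight} gives $\mathrm{wt}(aG) = n - \mathcal{M}(H)$; since $\Delta \mid \mathrm{wt}(aG)$ by hypothesis, I conclude $\mathcal{M}(H) \equiv n \pmod{\Delta}$, as required. For the reverse direction, I would assume $\mathcal{M}(H) \equiv n \pmod{\Delta}$ for every hyperplane $H$ and take an arbitrary codeword $c \in C$. The zero codeword satisfies $\mathrm{wt}(0) = 0$, which is divisible by $\Delta$; for a nonzero $c$ I write $c = aG$ with $a \ne 0$ and apply Proposition \ref{weight} once more to obtain $\mathrm{wt}(c) = n - \mathcal{M}(H_a) \equiv n - n = 0 \pmod{\Delta}$. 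Hence $\Delta$ divides every codeword weight, i.e., $C$ is $\Delta$-divisible.

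I expect no serious obstacle here, since the statement is a direct consequence of Proposition \ref{weight}. The only point that requires a moment of care is the bookkeeping of the vector-to-hyperplane correspondence: one must verify that quantifying over all nonzero $a$ is genuinely equivalent to quantifying over all hyperplanes $H$ (so that the ``for all hyperplanes'' and ``for all codewords'' quantifiers line up), and one must treat the zero codeword separately in the reverse direction. Neither of these is difficult, but both are needed to close the argument cleanly.
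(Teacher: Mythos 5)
Your proof is correct and is essentially the paper's own argument: the paper states this result as an immediate consequence of Proposition \ref{weight}, and your two implications (together with the vector-to-hyperplane correspondence $a \mapsto H_a$ and the separate treatment of the zero codeword) are exactly the bookkeeping that justifies calling it a corollary.
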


\section{Two Proofs of the Griesmer Bound}\label{Two proofs of the Griesmer bound}

The Griesmer bound was proved by Griesmer in the case when $q=2$, by Solomon and Stiffler for general prime powers $q$. In order to keep the paper self-contained, we will give two proofs for the Griesmer bound in this section.

\subsection{The First Proof of the Griesmer Bound}\label{The first proof}

We first prove a lemma about the residual code $\mathrm{Res}(C,a)$ where $a$ is a minimum weight codeword of $C$.

\begin{lemma}\label{property of the residual code}
Let $C$ be an $[n,k,d]_q$ code with $k\ge2$ and $a$ be a minimum weight codeword of $C$. Then $\mathrm{Res}(C,a)$ is an $[n-d,k-1,d']_q$ code with $d'\ge\left \lceil d/q \right \rceil $.     
\end{lemma}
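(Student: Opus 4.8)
The plan is to establish the three parameters of $\mathrm{Res}(C,a)$ one at a time. Write $A:=\mathrm{supp}(a)$, so that $|A|=\mathrm{wt}(a)=d$ because $a$ is a minimum weight codeword. The length claim is immediate: puncturing deletes the $d$ coordinates indexed by $A$, so $\mathrm{Res}(C,a)$ has length $n-d$.

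For the dimension, I would analyze the kernel of the (linear) puncturing map $\pi\colon C\to\mathbb{F}_q^{n-d}$, $c\mapsto\mathrm{res}(c,a)$. Its kernel consists of the codewords $c\in C$ with $\mathrm{supp}(c)\subseteq A$. Given such a nonzero $c$, fix any $i\in A$ and set $\lambda:=c_i/a_i$ (legitimate since $a_i\ne0$ for $i\in A$); then $c-\lambda a$ is supported on $A\setminus\{i\}$, so $\mathrm{wt}(c-\lambda a)\le d-1<d$, which forces $c=\lambda a$ by the minimality of $d$. Hence $\ker\pi=\langle a\rangle$ is one-dimensional, and the rank--nullity theorem yields $\dim\mathrm{Res}(C,a)=k-1$.

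The substantive step is the distance bound $d'\ge\lceil d/q\rceil$, and here I would run an averaging argument over the fibre of $\pi$. Let $c\in C$ be such that $\mathrm{res}(c,a)$ is a nonzero codeword of weight $w$. Since $a$ vanishes outside $A$, every translate $c-\lambda a$ with $\lambda\in\mathbb{F}_q$ has the same residual $\mathrm{res}(c,a)$, hence the same weight $w$ off $A$; moreover none of these translates is the zero codeword (a zero translate would force $\mathrm{res}(c,a)=0$), so each satisfies $\mathrm{wt}(c-\lambda a)\ge d$. Writing $N_\lambda:=|\{\,i\in A:c_i\ne\lambda a_i\,\}|$ for the contribution on $A$, we get $w+N_\lambda\ge d$ for every $\lambda$. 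The key counting observation is that for each fixed $i\in A$ the equation $c_i=\lambda a_i$ has exactly one solution $\lambda$, so $i$ contributes to $N_\lambda$ for precisely $q-1$ values of $\lambda$; summing over the $d$ indices $i\in A$ gives $\sum_{\lambda\in\mathbb{F}_q}N_\lambda=(q-1)d$. Combining this with $N_\lambda\ge d-w$ summed over the $q$ values of $\lambda$ yields $(q-1)d\ge q(d-w)$, i.e. $qw\ge d$, whence $w\ge\lceil d/q\rceil$.

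The only point requiring care — and the crux of the argument — is the double count of $\sum_\lambda N_\lambda$ together with the verification that every translate $c-\lambda a$ is nonzero, which is exactly what guarantees the uniform lower bound $\mathrm{wt}(c-\lambda a)\ge d$ across the whole fibre; the length and dimension claims are routine once the kernel of $\pi$ has been identified.
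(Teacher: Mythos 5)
Your proposal is correct and follows essentially the same route as the paper: the dimension claim comes from identifying $\ker$ of the puncturing map as $\langle a\rangle$ via minimality of $d$, and the distance bound comes from applying $\mathrm{wt}(c-\lambda a)\ge d$ to the $q$ translates in the fibre and counting agreements with $\lambda a$ on $\mathrm{supp}(a)$. The only difference is bookkeeping: the paper substitutes the individual bounds $A_\alpha\le\mathrm{wt}(\mathrm{res}(b,a))$ into the weight partition to get $\mathrm{wt}(b)\le q\cdot\mathrm{wt}(\mathrm{res}(b,a))$, whereas you sum the equivalent inequalities $N_\lambda\ge d-w$ over all $\lambda\in\mathbb{F}_q$ against the double count $\sum_\lambda N_\lambda=(q-1)d$ --- these are the same inequalities combined in a different order.
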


\begin{proof}

First we claim that $\dim \mathrm{Res}(C,a)=k-1$. Clearly $\mathrm{Res}(C,a)$ is a linear code; so it makes sense to talk about its dimension. Let $P_a$ be the puncturing map
\begin{align*}
P_a: C&\longrightarrow \mathrm{Res}(C,a)\\ 
c&\longmapsto P_a(c):=\mathrm{res}(c,a)
\end{align*} 
It is easy to see that $P_a$ is a linear map. Since $a$ is a minimum weight codeword, we have $\mathrm{ker}(P_a)=\left \langle a \right \rangle $, the 1-dimensional subspace spanned by $a$. Thus, $\dim \mathrm{Res}(C,a)=k-1$.

Next, we prove $d'\ge\left \lceil d/q \right \rceil $. Let $b=(b_1,\ldots,b_n)\in C$ be a codeword which is not a multiple of $a=(a_1,\ldots,a_n)$. Then $\mathrm{res}(b,a)\ne0$. Let $\mathbb{F}_q^{\times}:=\mathbb{F}_q\backslash\{0\}$. For $\alpha\in\mathbb{F}_q^{\times}$, we define
\begin{align*}
A_{\alpha}:=\left | \left\{i: a_i=\alpha b_i\ne0 \right\} \right | ,\text{ where } \alpha\in\mathbb{F}_q^{\times}.
\end{align*}
Then we have
\begin{equation}\label{weight partition}
\mathrm{wt}(\mathrm{res}(b,a))+\sum_{\alpha\in\mathbb{F}_q^{\times}}{A_{\alpha}}=\mathrm{wt}(b).    
\end{equation}
Note that $a-\alpha b\in C$, we have 
\begin{equation*}
\mathrm{wt}(a-\alpha b)=d-A_{\alpha}+\mathrm{wt}(\mathrm{res}(b,a)) \ge d,  
\end{equation*}
from which it follows that
\begin{equation}\label{upper bound of agreement}
A_{\alpha}\le \mathrm{wt}(\mathrm{res}(b,a)).    
\end{equation}
Combining (\ref{weight partition}) with (\ref{upper bound of agreement}), we obtain
\begin{equation}\label{weight of b and weight of res(b,a)}
\mathrm{wt}(b)\le q\cdot \mathrm{wt}(\mathrm{res}(b,a)).    
\end{equation}
Since $\mathrm{wt}(b)\ge d$, we have $q\cdot \mathrm{wt}(\mathrm{res}(b,a))\ge d$ and so $\mathrm{wt}(\mathrm{res}(b,a))\ge \left \lceil d/q \right \rceil $.    
\end{proof}

\begin{proof}[The First Proof of Theorem \ref{Griesmer bound}]
We proceed by induction on the dimension $k$. Clearly the bound holds for $k=1$. Assume that the conclusion holds for linear codes of dimension less than $k$. Applying the induction hypothesis and Lemma \ref{property of the residual code} on $\mathrm{Res}(C,a)$, we have
\begin{align*}
n-d\ge \sum_{i=0}^{(k-1)-1}\left \lceil \frac{d'}{q^{i}}\right \rceil\ge\sum_{i=0}^{(k-1)-1}\left \lceil \frac{\left \lceil \frac{d}{q} \right \rceil }{q^i} \right \rceil=\sum_{i=1}^{k-1}\left \lceil \frac{d}{q^{i}}\right \rceil.     
\end{align*}
The proof is now complete.
\end{proof}

\subsection{The Second Proof of the Griesmer Bound}\label{The first proof}

By $\gamma(\mathcal{M})$ we denote the maximum point multiplicity of a multiset of points $\mathcal{M}$ in $\mathrm{PG}(k-1,q)$. A linear code $C$ is called \textit{projective} if $\gamma(\mathcal{M}_C)=1$.

\begin{lemma}\label{lower bound of maximum multiplicity}
Let $C$ be an $[n,k,d]_q$ code and $\mathcal{M}$ be the corresponding multiset of $C$. Then
\begin{align*}
\gamma(\mathcal{M})\ge\left \lceil \frac{d}{q^{k-1}}  \right \rceil.    
\end{align*}      
\end{lemma}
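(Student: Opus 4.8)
The plan is to relate the maximum point multiplicity $\gamma(\mathcal{M})$ to the minimum distance $d$ via the geometry of hyperplanes in $\mathrm{PG}(k-1,q)$. The key observation is that a point $P$ of multiplicity $\gamma(\mathcal{M})$ lies in many hyperplanes, and any such hyperplane $H$ captures at least that much of the multiset's mass. First I would recall from Proposition \ref{weight} and Corollary \ref{congruence corollary} that for any nonzero $a\in\mathbb{F}_q^k$ the weight of the corresponding codeword is $\mathrm{wt}(c)=n-\mathcal{M}(H_a)$, so that $\mathcal{M}(H_a)=n-\mathrm{wt}(c)\le n-d$ for every hyperplane $H_a$. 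Thus controlling $d$ is equivalent to upper-bounding the maximum value of $\mathcal{M}(H)$ over all hyperplanes $H$.

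The core counting step is an averaging argument over hyperplanes through a fixed point. Fix a point $P$ with $\mathcal{M}(P)=\gamma(\mathcal{M})=:\gamma$. Every one of the remaining points of the multiset (there are $n-\gamma$ of them, counted with multiplicity) together with $P$ determines lines, and I would count incidences between these points and the hyperplanes containing $P$. The number of hyperplanes of $\mathrm{PG}(k-1,q)$ through a fixed point is $(q^{k-1}-1)/(q-1)$, and each point $Q\ne P$ lies together with $P$ in exactly $(q^{k-2}-1)/(q-1)$ of these hyperplanes. Summing $\mathcal{M}(H)$ over all hyperplanes $H$ through $P$ and applying a pigeonhole/averaging estimate yields that some hyperplane $H$ through $P$ satisfies
\begin{align*}
\mathcal{M}(H)\ge \gamma + \frac{q^{k-2}-1}{q^{k-1}-1}\,(n-\gamma).
\end{align*}
Combining this with $\mathcal{M}(H)\le n-d$ gives an inequality relating $d$, $n$, and $\gamma$, which should rearrange to $d\le \gamma\, q^{k-1}$, i.e.\ $\gamma\ge d/q^{k-1}$; taking ceilings (since $\gamma$ is an integer) delivers the claim.

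I expect the main obstacle to be executing the averaging cleanly so that the floor/ceiling manipulations come out exactly, rather than losing a constant and obtaining a weaker bound. The subtlety is that the crude average over all hyperplanes through $P$ must be compared against the \emph{maximum}, and one must verify that the resulting inequality is tight enough to yield $q^{k-1}$ in the denominator without slack. An alternative, possibly cleaner route avoiding delicate incidence counts is to argue inductively by passing to a quotient: projecting from the point $P$ of maximum multiplicity (or, dually, restricting to a hyperplane) reduces the dimension, and one relates $\gamma(\mathcal{M})$ in $\mathrm{PG}(k-1,q)$ to the corresponding quantity in $\mathrm{PG}(k-2,q)$ while tracking how $d$ and $q^{k-1}$ scale. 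Given that this lemma is a stepping stone toward the second proof of the Griesmer bound, I anticipate the authors favor whichever of these two approaches meshes most directly with the subsequent induction, and I would organize the argument to produce exactly the divisibility-friendly form $\gamma(\mathcal{M})\ge\lceil d/q^{k-1}\rceil$.
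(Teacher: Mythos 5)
Your setup is the right one (weights as multiset mass outside hyperplanes, so $\mathcal{M}(H_a)=n-\mathrm{wt}(aG)\le n-d$ for every nonzero $a$), and your averaging computation is itself correct: the mean of $\mathcal{M}(H)$ over the $(q^{k-1}-1)/(q-1)$ hyperplanes through a point $P$ with $\mathcal{M}(P)=\gamma$ equals $\gamma+(n-\gamma)\frac{q^{k-2}-1}{q^{k-1}-1}$, so some hyperplane through $P$ attains at least this value. The gap is in the step where you claim the resulting inequality ``should rearrange to $d\le\gamma q^{k-1}$.'' It does not. Combining your lower bound with $\mathcal{M}(H)\le n-d$ gives exactly
\begin{align*}
d\;\le\;(n-\gamma)\,\frac{q^{k-2}(q-1)}{q^{k-1}-1},
\end{align*}
an inequality that still involves $n$; by itself it cannot be rearranged into a bound relating only $d$, $\gamma$ and $q^{k-1}$. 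To close the argument you must additionally invoke the bound $n=\mathcal{M}(\mathcal{P})\le\gamma\cdot\frac{q^k-1}{q-1}$ (total mass is at most the maximum multiplicity times the number of points of $\mathrm{PG}(k-1,q)$), which gives $n-\gamma\le\gamma\,q\,\frac{q^{k-1}-1}{q-1}$; substituting this into the display does yield $d\le\gamma q^{k-1}$ and hence $\gamma\ge\lceil d/q^{k-1}\rceil$ by integrality. As written, with that ingredient missing, the proof is incomplete.

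Moreover, once you grant that missing ingredient, the entire averaging detour becomes unnecessary, and this is essentially the paper's (much shorter) proof: for any nonzero codeword $c=aG$, $\mathrm{wt}(c)$ is the mass of $\mathcal{M}$ on the complement of $H_a$, and the complement of a hyperplane in $\mathrm{PG}(k-1,q)$ contains exactly $q^{k-1}$ points, each of multiplicity at most $\gamma$; hence $d\le\mathrm{wt}(c)\le\gamma\,q^{k-1}$, or equivalently, pigeonhole applied to a mass of size at least $d$ spread over $q^{k-1}$ points produces a point of multiplicity at least $\lceil d/q^{k-1}\rceil$. (The paper phrases this with a row of the generator matrix: the $\ell$-th row has weight at least $d$, and its nonzero columns all lie among the $q^{k-1}$ projective points whose $\ell$-th coordinate is nonzero.) Your second, sketchier alternative --- projecting from $P$ and inducting on dimension --- is not developed enough to assess and is also not what the paper does; the direct pigeonhole over points outside one hyperplane is the idea your proposal is missing.
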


\begin{proof}
Suppose that $\mathcal{M}$ corresponds to a generator matrix $G=(g_1,\ldots,g_n)$ of $C$ where $g_i=(g_{1,i},\ldots,g_{k,i})^{\top}\in\mathbb{F}_q^k\backslash\{0\}$, $i=1,\ldots,n$. Without loss of generality, assume that $\left \langle g_1 \right \rangle \in\mathrm{PG}(k-1,q)$ has the maximum multiplicity. Suppose $g_{\ell,1}\ne0$ for some $\ell$ since one of $g_{i,1}$ $(i=1,\ldots,k)$ must be nonzero. Note that the $\ell$-th row of $G$ has weight $\mathrm{wt}(g_{\ell,1},\ldots,g_{\ell,n})\ge d$. Therefore, among the columns indexed by $\mathrm{supp}(g_{\ell,1},\ldots,g_{\ell,n})$, there are at least $\left \lceil\mathrm{wt}(g_{\ell,1},\ldots,g_{\ell,n})/q^{k-1}\right \rceil \ge\left \lceil d/q^{k-1} \right \rceil $ columns of $G$ which are the same in $\mathrm{PG}(k-1,q)$ by the pigeonhole principle because there are $q^{k-1}$ different points whose $\ell$-th coordinate is nonzero. The proof is now complete.
\end{proof}

\begin{proof}[The Second Proof of Theorem \ref{Griesmer bound}]
We again use induction on the dimension $k$. Clearly, the bound holds for $k=1$. Assume that the conclusion holds for linear codes of dimension less than $k$. Suppose that $g_{i_1},\ldots,g_{i_{\gamma(\mathcal{M})}}$ are the same up to a scalar factor, i.e., $\left \langle g_{i_1} \right \rangle =\cdots=\langle g_{i_{\gamma(\mathcal{M})}} \rangle =:\left \langle g \right \rangle $. Consider the shortened subcode $C_{\left \langle g \right \rangle }$ of $C$ as follows:
\begin{align*}
C_{\left \langle g \right \rangle }:=\left\{c=(c_1,\ldots,c_n)\in C:c_i=0,\forall i\in\{i_1,\ldots,i_{\gamma(\mathcal{M})}\}\right\}.
\end{align*}
Since $C_{\left \langle g \right \rangle }\subsetneqq C$, $\dim C_{\left \langle g \right \rangle }\le k-1$. Moreover, since $\left \langle g_{i_1} \right \rangle =\cdots=\langle g_{i_{\gamma(\mathcal{M})}} \rangle $, the dimension decreases by 1 and so $\dim C_{\left \langle g \right \rangle }=k-1$. Note that $C_{\left \langle g \right \rangle }$ is an $\left[n,k-1,d'\right]_q$ code with $n(C_{\left \langle g \right \rangle })=n-\gamma(\mathcal{M})$ and $d'\ge d$. By induction hypothesis, we have
\begin{align*}
n-\gamma(\mathcal{M})\ge \sum_{i=0}^{(k-1)-1} \left \lceil d'/q^i \right \rceil \ge \sum_{i=0}^{(k-1)-1} \left \lceil d/q^i \right \rceil   
\end{align*}
and by Lemma \ref{lower bound of maximum multiplicity}, $\gamma(\mathcal{M})\ge\left \lceil \frac{d}{q^{k-1}}  \right \rceil$, then we see that
\begin{align*}
n\ge \sum_{i=0}^{k-2} \left \lceil d/q^i \right \rceil+\gamma(\mathcal{M})\ge \sum_{i=0}^{k-2} \left \lceil d/q^i \right \rceil+\left \lceil d/q^{k-1} \right \rceil=\sum_{i=0}^{k-1} \left \lceil d/q^i \right \rceil,
\end{align*}
which proves the case when the dimension is $k$. Now the proof is complete.   
\end{proof}

From these two proofs of the Griesmer bound, we derive the following result.

\begin{theorem}\label{residual code and shortened code}
Let $C$ be a $[g_q(k,d),k,d]_q$ Griesmer code where $k\ge 2$. Then
\begin{enumerate}
    \item For any codeword $a$ with minimum weight $\mathrm{wt}(a)=d$, the residual code of $C$ with respect to $a$, $\mathrm{Res}(C,a)$ is a $\left[g_q\left(k-1,\left \lceil \frac{d}{q}  \right \rceil \right),k-1,\left \lceil \frac{d}{q}  \right \rceil \right]_q$ Griesmer code.
    \item The shortened subcode $C_{\left \langle g \right \rangle }$ is a $\left[g_q(k-1,d),k-1,d\right]_q$ Griesmer code, where $\left \langle g \right \rangle\in\mathcal{M}_C$ is any point attaining the maximum point multiplicity, i.e., $\mathcal{M}_C(\left \langle g \right \rangle)=\gamma(\mathcal{M}_C)$.
\end{enumerate}  
\end{theorem}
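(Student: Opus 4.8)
The plan is to handle the two parts in parallel, since both rest on the same device: take the relevant subcode (the residual code for (1), the shortened subcode for (2)), read off its dimension, effective length, and a lower bound on its minimum distance from the material already developed in the two proofs of the Griesmer bound, and then apply the Griesmer bound \emph{to the subcode itself} to force all of these estimates to become equalities. Two elementary facts will be used throughout: the nested-ceiling identity $\lceil\lceil d/q\rceil/q^i\rceil=\lceil d/q^{i+1}\rceil$, and the strict monotonicity of the map $d\mapsto g_q(k-1,d)$, which holds precisely because $k-1\ge1$ so that the $i=0$ summand $\lceil d/q^0\rceil=d$ is present and increases by $1$ whenever $d$ does.

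For (1), Lemma \ref{property of the residual code} already gives that $\mathrm{Res}(C,a)$ is an $[\,n-d,\,k-1,\,d'\,]_q$ code with $d'\ge\lceil d/q\rceil$. First I would compute its length: since $C$ meets the Griesmer bound, $n-d=g_q(k,d)-d=\sum_{i=1}^{k-1}\lceil d/q^i\rceil$, and applying the nested-ceiling identity term by term rewrites this as $\sum_{i=0}^{k-2}\lceil\lceil d/q\rceil/q^i\rceil=g_q(k-1,\lceil d/q\rceil)$. Now the Griesmer bound (Theorem \ref{Griesmer bound}) applied to $\mathrm{Res}(C,a)$ gives $n-d\ge g_q(k-1,d')$, while monotonicity gives $g_q(k-1,d')\ge g_q(k-1,\lceil d/q\rceil)=n-d$. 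This chain must collapse to equalities, so $\mathrm{Res}(C,a)$ meets the Griesmer bound and $g_q(k-1,d')=g_q(k-1,\lceil d/q\rceil)$; strict monotonicity then pins down $d'=\lceil d/q\rceil$, which completes (1).

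For (2), the second proof of the Griesmer bound already establishes that the shortened subcode $C_{\langle g\rangle}$ has dimension $k-1$, effective length $n-\gamma(\mathcal{M})$, and minimum distance $d'\ge d$ (any nonzero codeword of $C_{\langle g\rangle}$ is a nonzero codeword of $C$). Here the telescoping identity $g_q(k,d)-g_q(k-1,d)=\lceil d/q^{k-1}\rceil$ is what I would exploit. Lemma \ref{lower bound of maximum multiplicity} supplies the lower bound $\gamma(\mathcal{M})\ge\lceil d/q^{k-1}\rceil$; to obtain the matching upper bound I would apply the Griesmer bound to $C_{\langle g\rangle}$, getting $n-\gamma(\mathcal{M})\ge g_q(k-1,d')\ge g_q(k-1,d)=n-\lceil d/q^{k-1}\rceil$, hence $\gamma(\mathcal{M})\le\lceil d/q^{k-1}\rceil$. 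Therefore $\gamma(\mathcal{M})=\lceil d/q^{k-1}\rceil$, the effective length equals $g_q(k-1,d)$, the inequality chain again collapses, and strict monotonicity forces $d'=d$, so $C_{\langle g\rangle}$ is the claimed $[g_q(k-1,d),k-1,d]_q$ Griesmer code.

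The conceptual content is entirely in the observation that applying the Griesmer bound to a subcode of a Griesmer code sandwiches its parameters between two equal quantities; the remaining work is bookkeeping. I expect the only (minor) obstacles to be verifying the nested-ceiling and telescoping identities cleanly, and invoking the strict monotonicity of $g_q(k-1,\cdot)$ correctly: it is exactly the hypothesis $k\ge2$ that guarantees the $i=0$ term is present, so that equality of $g_q$-values forces equality of the minimum distances rather than merely bounding them.
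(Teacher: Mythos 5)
Your proposal is correct and takes essentially the same approach the paper intends: the paper derives this theorem precisely by re-reading its two proofs of the Griesmer bound, applying the bound to the subcodes $\mathrm{Res}(C,a)$ and $C_{\left\langle g\right\rangle}$, and collapsing the resulting inequality chains using the nested-ceiling and telescoping identities together with the strict monotonicity of $g_q(k-1,\cdot)$. The only difference is that you write out explicitly the bookkeeping that the paper leaves implicit in the sentence ``From these two proofs of the Griesmer bound, we derive the following result.''
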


As a corollary, we obtain the maximum point multiplicity of Griesmer codes.

\begin{corollary}\label{the maximum point multiplicity of Griesmer codes}
Let $C$ be an $[n,k,d]_q$ Griesmer code. Then $\gamma(\mathcal{M}_C)=\left \lceil d/q^{k-1} \right \rceil $.    
\end{corollary}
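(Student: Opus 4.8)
The plan is to sandwich $\gamma(\mathcal{M}_C)$ between matching lower and upper bounds so as to pin down its exact value. The lower bound $\gamma(\mathcal{M}_C)\ge\left\lceil d/q^{k-1}\right\rceil$ is already furnished by Lemma \ref{lower bound of maximum multiplicity}, so the only substantive work is the reverse inequality $\gamma(\mathcal{M}_C)\le\left\lceil d/q^{k-1}\right\rceil$. The key observation is that the already-established Theorem \ref{residual code and shortened code}(2) pins down the effective length of the shortened subcode obtained at a point of maximal multiplicity, and this length forces $\gamma(\mathcal{M}_C)$ from above.

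Before invoking that theorem I would first dispose of the degenerate case $k=1$, since Theorem \ref{residual code and shortened code} is stated only for $k\ge2$. A $[g_q(1,d),1,d]_q$ Griesmer code has length $n=g_q(1,d)=\left\lceil d/q^0\right\rceil=d$, and since $\mathrm{PG}(0,q)$ consists of a single point, every column of a generator matrix represents that same point; hence $\gamma(\mathcal{M}_C)=n=d=\left\lceil d/q^{k-1}\right\rceil$, settling this case directly.

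For $k\ge2$, let $\langle g\rangle$ be a point attaining the maximum multiplicity, realized by the columns $g_{i_1},\ldots,g_{i_{\gamma(\mathcal{M}_C)}}$. By the very definition of the shortened subcode, every codeword of $C_{\langle g\rangle}$ vanishes on the $\gamma(\mathcal{M}_C)$ coordinates $i_1,\ldots,i_{\gamma(\mathcal{M}_C)}$, so those coordinates are identically zero on $C_{\langle g\rangle}$ and therefore $n(C_{\langle g\rangle})\le n-\gamma(\mathcal{M}_C)$. On the other hand, Theorem \ref{residual code and shortened code}(2) identifies $C_{\langle g\rangle}$ as a $\left[g_q(k-1,d),k-1,d\right]_q$ Griesmer code, whence $n(C_{\langle g\rangle})=g_q(k-1,d)$. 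Using $n=g_q(k,d)$ (as $C$ is Griesmer), these combine to give $\gamma(\mathcal{M}_C)\le g_q(k,d)-g_q(k-1,d)$.

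The final step is the routine telescoping identity
\[
g_q(k,d)-g_q(k-1,d)=\sum_{i=0}^{k-1}\left\lceil d/q^i\right\rceil-\sum_{i=0}^{k-2}\left\lceil d/q^i\right\rceil=\left\lceil d/q^{k-1}\right\rceil,
\]
which yields $\gamma(\mathcal{M}_C)\le\left\lceil d/q^{k-1}\right\rceil$ and, together with Lemma \ref{lower bound of maximum multiplicity}, the claimed equality. I do not anticipate any genuine obstacle: the whole argument is a short consequence of results already in hand. The one point worth flagging is that I deliberately use only the inequality $n(C_{\langle g\rangle})\le n-\gamma(\mathcal{M}_C)$ — which holds trivially because those $\gamma(\mathcal{M}_C)$ coordinates are forced to be zero — rather than the exact equality $n(C_{\langle g\rangle})=n-\gamma(\mathcal{M}_C)$ asserted in the second proof of the Griesmer bound; this keeps the upper bound self-evidently rigorous without having to rule out the degeneration of further coordinates.
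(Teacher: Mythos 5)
Your proof is correct and follows essentially the same route as the paper's intended derivation: the lower bound is Lemma \ref{lower bound of maximum multiplicity}, and the upper bound comes from Theorem \ref{residual code and shortened code}(2), which pins the effective length of the shortened subcode $C_{\langle g\rangle}$ at $g_q(k-1,d)$, combined with the telescoping identity $g_q(k,d)-g_q(k-1,d)=\lceil d/q^{k-1}\rceil$. Your handling of the degenerate case $k=1$ and your deliberate use of only the inequality $n(C_{\langle g\rangle})\le n-\gamma(\mathcal{M}_C)$ are minor tightenings of the same argument, not a different approach.
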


\begin{corollary}
Let $C$ be an $[n,k,d]_q$ Griesmer code. Then $C$ is projective if and only if $d\le q^{k-1}$.     
\end{corollary}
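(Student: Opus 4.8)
The plan is to reduce the statement entirely to Corollary \ref{the maximum point multiplicity of Griesmer codes}, which already computes the maximum point multiplicity of a Griesmer code exactly: $\gamma(\mathcal{M}_C)=\left\lceil d/q^{k-1}\right\rceil$. By the very definition of a projective code, $C$ is projective if and only if $\gamma(\mathcal{M}_C)=1$. Substituting the formula, the geometric condition becomes the purely arithmetic condition $\left\lceil d/q^{k-1}\right\rceil=1$, so the whole proof amounts to showing that this ceiling equals $1$ exactly when $d\le q^{k-1}$.

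The key arithmetic observation is that, since $C$ is an $[n,k,d]_q$ code with $k\ge 1$, its minimum distance satisfies $d\ge 1$, and hence $d/q^{k-1}>0$. Consequently $\left\lceil d/q^{k-1}\right\rceil\ge 1$ always holds, and the ceiling equals $1$ precisely when $0<d/q^{k-1}\le 1$, that is, when $d\le q^{k-1}$. Chaining these equivalences gives $C$ projective $\iff \gamma(\mathcal{M}_C)=1 \iff \left\lceil d/q^{k-1}\right\rceil=1 \iff d\le q^{k-1}$, which is the claim.

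I do not expect any genuine obstacle here: the corollary is an immediate consequence of the preceding one, and the only content is the elementary fact that $\lceil x\rceil=1\iff 0<x\le 1$ for $x=d/q^{k-1}$. The one point worth stating explicitly, to keep the argument self-contained, is why $d\ge 1$ (so that the lower bound $\lceil d/q^{k-1}\rceil\ge 1$ is automatic and the equivalence is clean); this is simply because $d$ is the minimum distance of a nonzero linear code.
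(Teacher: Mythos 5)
Your proof is correct and takes exactly the route the paper intends: the corollary is stated without proof precisely because it follows immediately from Corollary \ref{the maximum point multiplicity of Griesmer codes} via the definition of projectivity ($\gamma(\mathcal{M}_C)=1$) and the elementary equivalence $\left\lceil d/q^{k-1}\right\rceil=1\iff 1\le d\le q^{k-1}$. Nothing is missing.
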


\section{A Basis for Griesmer Codes}\label{A Basis for Griesmer codes section}

Before proceeding to the proofs of Theorem \ref{First Theorem} and Theorem \ref{Second Theorem}, we shall show that any $[g_q(k,d),k,d]_q$ Griesmer code with $q^e|d$ has a basis (which we implicitly consider as ordered) consisting of $k$ codewords such that the first $\min\{e+1,k\}$ of them span a Griesmer subcode with constant weight $d$ and any $(k-1)$ of them span a $[g_q(k-1,d),k-1,d]_q$ Griesmer subcode. We will gather here some related results first.

\begin{proposition}[Corollary 5 in \cite{ward1998divisibility}]\label{constant weight}
Suppose that $C$ is a $[g_q(k,d),k,d]_q$ Griesmer code and that $q^{k-1}|d$. Then all nonzero codeword of $C$ have weight $d$.  
\end{proposition}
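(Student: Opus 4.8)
The plan is to prove Proposition~\ref{constant weight} by induction on $k$, leveraging the structural results from Theorem~\ref{residual code and shortened code}. Since $q^{k-1}\mid d$, the hypothesis is quite strong, and the key observation is that $g_q(k,d)$ simplifies dramatically in this regime: because $q^{k-1}\mid d$, every ceiling $\lceil d/q^i\rceil$ for $0\le i\le k-1$ is exact (no rounding occurs), so $g_q(k,d)=\sum_{i=0}^{k-1} d/q^i = d\cdot\frac{q^k-1}{q^{k-1}(q-1)}$. I would record this explicitly at the outset, as it pins down the exact length $n$ of the code.

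First I would dispose of the base case. For $k=1$, the code is a $[d,1,d]_q$ code, which is the $d$-fold repetition of a one-dimensional code, and every nonzero codeword is a scalar multiple of the generator, hence has weight exactly $d$. For the inductive step, I would pick an arbitrary nonzero codeword $c\in C$ and aim to show $\mathrm{wt}(c)=d$; equivalently, using Corollary~\ref{congruence corollary} and Proposition~\ref{weight}, I want every hyperplane $H$ to satisfy $\mathcal{M}_C(H)=n-d$, i.e. all hyperplanes meet the multiset in the same number of points. The natural route is via Theorem~\ref{residual code and shortened code}(2): since $q^{k-1}\mid d$, Corollary~\ref{the maximum point multiplicity of Griesmer codes} gives $\gamma(\mathcal{M}_C)=\lceil d/q^{k-1}\rceil = d/q^{k-1}$, and the shortened subcode $C_{\langle g\rangle}$ at a point of maximum multiplicity is a $[g_q(k-1,d),k-1,d]_q$ Griesmer code. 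Crucially, $q^{k-1}\mid d$ implies $q^{(k-1)-1}\mid d$, so the induction hypothesis applies to $C_{\langle g\rangle}$: all its nonzero codewords have weight $d$.

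The heart of the argument is to transfer the constant-weight property from the shortened subcode back to all of $C$. Here I would work geometrically: the multiset $\mathcal{M}_C$ consists of the point $\langle g\rangle$ with multiplicity $\gamma=d/q^{k-1}$ together with the remaining points, and I would try to show that $\mathcal{M}_C$ is in fact a constant multiple of the multiset of \emph{all} points of $\mathrm{PG}(k-1,q)$. Indeed, the total number of points is $(q^k-1)/(q-1)$, and multiplying by $\gamma=d/q^{k-1}$ gives exactly $n=g_q(k,d)$ computed above; this strongly suggests $\mathcal{M}_C(P)=d/q^{k-1}$ for \emph{every} point $P$. Since every hyperplane of $\mathrm{PG}(k-1,q)$ contains the same number $(q^{k-1}-1)/(q-1)$ of points, the constant multiset then meets every hyperplane equally, forcing all nonzero weights to equal $d$. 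To make this rigorous I would argue that every point attains the maximum multiplicity: by Theorem~\ref{residual code and shortened code}(2) applied at any maximum-multiplicity point together with the induction hypothesis, I can control intersection sizes, and a counting argument (total multiplicity $n$ distributed over the points, with each hyperplane-intersection constrained) pins every point multiplicity to $\gamma$.

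The main obstacle I anticipate is precisely this last counting step: proving that \emph{every} point, not just one, attains multiplicity $\gamma=d/q^{k-1}$. The induction only directly hands me information about one shortened subcode at a time, so I expect to need a global averaging or double-counting argument over all hyperplanes (or all points) to rule out any point of smaller multiplicity. One clean way would be to sum $\mathcal{M}_C(H)$ over all hyperplanes $H$ and compare with the standard identity $\sum_H \mathcal{M}_C(H)=\frac{q^{k-1}-1}{q-1}\cdot n$, then combine this with the lower bound $\mathcal{M}_C(H)\le n-d$ coming from $\mathrm{wt}(c)\ge d$; equality in the average forces $\mathcal{M}_C(H)=n-d$ for all $H$, which is exactly the $d$-divisibility-with-constant-weight conclusion. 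I would make sure the arithmetic $\frac{q^{k-1}-1}{q-1}\cdot n = \left(\frac{q^k-1}{q-1}\right)(n-d)$ checks out using the exact value of $n$, as that identity is what closes the induction.
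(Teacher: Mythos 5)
Your proposal is correct, but the part that actually proves the proposition is your final double-counting paragraph; everything before it is dispensable scaffolding. Note first that the paper does not prove Proposition \ref{constant weight} at all: it is imported as Corollary 5 of \cite{ward1998divisibility}, where Ward's argument is an induction on $k$ through residual codes. Your closing argument is a genuinely different, induction-free proof, and it is complete as it stands: since $q^{k-1}\mid d$, one has exactly $n=g_q(k,d)=d\,\frac{q^k-1}{q^{k-1}(q-1)}$; by Proposition \ref{weight}, every hyperplane $H=H_a$ satisfies $\mathcal{M}_C(H)=n-\mathrm{wt}(aG)\le n-d$ (an \emph{upper} bound --- you call it a lower bound, a slip worth fixing, and it uses that $aG\ne 0$ for $a\ne 0$ since $G$ has rank $k$); double counting point--hyperplane incidences gives $\sum_{H}\mathcal{M}_C(H)=n\cdot\frac{q^{k-1}-1}{q-1}$, which by the displayed value of $n$ equals $\frac{q^k-1}{q-1}\,(n-d)$, i.e.\ the number of hyperplanes times $(n-d)$; hence $\mathcal{M}_C(H)=n-d$ for every $H$, and every nonzero codeword has weight exactly $d$. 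Observe that this uses neither your induction, nor the shortened subcode of Theorem \ref{residual code and shortened code}(2), nor the claim that every point of $\mathrm{PG}(k-1,q)$ has multiplicity $d/q^{k-1}$: that claim is true (it is Bonisoli's theorem, mentioned right after the proposition in the paper) but strictly stronger than needed, and the ``main obstacle'' you worried about dissolves because weights are determined by hyperplane multiplicities alone, not by point multiplicities. For comparison, the route the paper's own machinery most directly supports is the induction you started, but run through residual rather than shortened codes: for a minimum-weight codeword $a$, Theorem \ref{residual code and shortened code}(1) makes $\mathrm{Res}(C,a)$ a $[g_q(k-1,d/q),k-1,d/q]_q$ Griesmer code with $q^{k-2}\mid (d/q)$, induction gives it constant weight $d/q$, and then inequality (\ref{weight of b and weight of res(b,a)}) forces $\mathrm{wt}(b)\le q\cdot(d/q)=d$, hence $\mathrm{wt}(b)=d$, for every $b\notin\langle a\rangle$. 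Your count buys a one-shot, global, induction-free proof; the residual-code induction buys locality and consistency with the paper's other arguments (e.g.\ Proposition \ref{preimage of minimum weight codeword in Res(C,a)}).
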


We remark here that constant weight linear codes are projectively equivalent to a replication of Hamming duals, as shown in \cite{bonisoli1984every}, with a new proof provided in \cite{ward1996characters}.

\begin{proposition}\label{preimage of minimum weight codeword in Res(C,a)}
Let $C$ be a $[g_q(k,d),k,d]_q$ Griesmer code and $\mathrm{Res}(C,a)$ be the residual code of $C$ with respect to a minimum weight codeword $a\in C$. Let $P_a$ be the puncturing map as in the proof of Lemma \ref{property of the residual code} and $c'$ be a minimum weight codeword in $\mathrm{Res}(C,a)$. 
\begin{enumerate}
    \item If $q|d$, then any preimage of $c'$ under $P_a$ must be a minimum weight codeword in $C$. In other words, the number of the preimages of $c'$ with minimum weight $d$ is $q$. 
    \item If $q\nmid d$, then there exists a preimage of $c'$ having minimum weight $d$ in $C$, moreover, the number of these preimages is at least $d+q-\left \lceil d/q \right \rceil q$. 
\end{enumerate}   
In summary, there always exists a preimage of $c'$ having minimum weight $d$ in $C$.
\end{proposition}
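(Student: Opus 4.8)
The plan is to study the fibre of the puncturing map $P_a$ over $c'$ and to compute the weights of all codewords lying in it. First I would pin down the minimum distance of the residual code: by Theorem \ref{residual code and shortened code}(1), since $a$ has weight $d$, $\mathrm{Res}(C,a)$ is a $[g_q(k-1,\lceil d/q\rceil),k-1,\lceil d/q\rceil]_q$ Griesmer code, so its minimum distance is exactly $\lceil d/q\rceil$ and hence $\mathrm{wt}(c')=\lceil d/q\rceil$. Next, recalling from the proof of Lemma \ref{property of the residual code} that $\ker(P_a)=\langle a\rangle$, the fibre $P_a^{-1}(c')$ is a coset of $\langle a\rangle$; fixing one preimage $b$, it is exactly $\{b+\lambda a:\lambda\in\mathbb{F}_q\}$, so there are precisely $q$ preimages to examine.

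The key step is an exact weight computation for $b+\lambda a$, obtained by splitting the coordinates into those outside $\mathrm{supp}(a)$ and those inside it. Outside $\mathrm{supp}(a)$ we have $a_j=0$, so $(b+\lambda a)_j=b_j$ and the contribution is $\mathrm{wt}(\mathrm{res}(b,a))=\mathrm{wt}(c')=\lceil d/q\rceil$, independent of $\lambda$. Inside $\mathrm{supp}(a)$, for each $i\in\mathrm{supp}(a)$ I would set $r_i:=b_i/a_i\in\mathbb{F}_q$, so that $(b+\lambda a)_i=0$ precisely when $r_i=-\lambda$. Introducing the counts $N_\mu:=|\{i\in\mathrm{supp}(a):r_i=\mu\}|$ for $\mu\in\mathbb{F}_q$, which satisfy $\sum_{\mu\in\mathbb{F}_q}N_\mu=|\mathrm{supp}(a)|=d$, the number of zero coordinates of $b+\lambda a$ inside $\mathrm{supp}(a)$ is $N_{-\lambda}$. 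This yields
\[\mathrm{wt}(b+\lambda a)=\lceil d/q\rceil+\bigl(d-N_{-\lambda}\bigr).\]
Since $b+\lambda a\in C$ forces $\mathrm{wt}(b+\lambda a)\ge d$, every $N_\mu$ must satisfy $N_\mu\le\lceil d/q\rceil$, and $\mathrm{wt}(b+\lambda a)=d$ holds if and only if $N_{-\lambda}=\lceil d/q\rceil$. Thus the number of weight-$d$ preimages equals the number of $\mu\in\mathbb{F}_q$ with $N_\mu=\lceil d/q\rceil$.

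It then remains to count, using only the constraints $\sum_\mu N_\mu=d$, $0\le N_\mu\le\lceil d/q\rceil$, over the $q$ values of $\mu$. When $q\mid d$ one has $\lceil d/q\rceil=d/q$, and a sum of $q$ terms each at most $d/q$ that equals $d$ forces every $N_\mu=d/q$; hence all $q$ preimages attain weight $d$, which is part (1). When $q\nmid d$, writing $t$ for the number of indices $\mu$ with $N_\mu=\lceil d/q\rceil$ and bounding the remaining terms by $\lceil d/q\rceil-1$, I would estimate $d=\sum_\mu N_\mu\le q\lceil d/q\rceil-(q-t)$, which rearranges to $t\ge d+q-\lceil d/q\rceil q$; this quantity is positive exactly because $q\nmid d$, giving both the lower bound in part (2) and the existence claim in the summary.

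I do not expect a serious obstacle: the argument is an exact decomposition of the weight followed by an elementary counting estimate. The points needing care are confirming that the off-support contribution is genuinely constant in $\lambda$ and equal to $\mathrm{wt}(c')$ (which is where Theorem \ref{residual code and shortened code}(1) is used to guarantee $\mathrm{wt}(c')=\lceil d/q\rceil$), and correctly converting the minimum-distance constraint $N_\mu\le\lceil d/q\rceil$ into the stated lower bound on the number of maximal $N_\mu$'s.
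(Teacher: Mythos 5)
Your proof is correct and takes essentially the same route as the paper's: both fix one preimage $b$, split the weight of $b+\lambda a$ into the off-support contribution $\mathrm{wt}(c')=\left\lceil d/q\right\rceil$ and an in-support count classified by the ratios of coordinates of $b$ to those of $a$ (your $N_\mu$ are the paper's $A_\alpha$, with $\mu=0$ absorbed into the bookkeeping), and then apply the minimum-distance constraint together with a pigeonhole count. Your single formula $\mathrm{wt}(b+\lambda a)=\left\lceil d/q\right\rceil + d - N_{-\lambda}$ merely streamlines the paper's two-case argument into one uniform computation.
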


\begin{proof} 
There are a total of $q$ preimages of $c'\in \mathrm{Res}(C,a)$ as seen in the proof of Lemma \ref{property of the residual code}. Now we take an arbitrary preimage $b\in C$ of $c'$, i.e., $\mathrm{res}(b,a)=c'$.
\begin{enumerate}
    \item When $q|d$: According to (\ref{weight of b and weight of res(b,a)}),
\begin{align*}
\mathrm{wt}(b)\le q\cdot\mathrm{wt}(\mathrm{res}(b,a))=q\cdot\mathrm{wt}(c')=q\cdot\left \lceil \frac{d}{q}  \right \rceil =q\cdot\frac{d}{q}=d.   
\end{align*}
Thus, $\mathrm{wt}(b)=d$. This proves the first part.
    \item When $q\nmid d$: Write $d=d_1q+d_0$ where $1\le d_0\le q-1$. Then $\mathrm{wt}(c')=\left \lceil \frac{d}{q}  \right \rceil =d_1+1$. As in the proof of Lemma \ref{property of the residual code}, $A_{\alpha}\le d_1+1$ and if there exists an $\alpha\in\mathbb{F}_q$ such that $A_{\alpha}=d_1+1$, then $\mathrm{wt}(b-\alpha a)=d$. Now suppose that $A_{\alpha}\le d_1$ for all $\alpha\in\mathbb{F}_q$. According to (\ref{weight partition}),
    \begin{align*}
    \mathrm{wt}(b)=\mathrm{wt}(\mathrm{res}(b,a))+\sum_{\alpha\in\mathbb{F}_q^{\times}}{A_{\alpha}}\le d_1+1+(q-1)d_1=d_1q+1.
    \end{align*}
    On the other hand, $\mathrm{wt}(b)\ge d=d_1q+d_0\ge d_1q+1$, so $\mathrm{wt}(b)=d$ and $d=d_1q+1$ in this case. Now we know that there exists a preimage $b\in C$ of $c'$ such that $\mathrm{wt}(b)=d$. We consider (\ref{weight partition}) again,
    \begin{align*}
    \mathrm{wt}(b)=\mathrm{wt}(\mathrm{res}(b,a))+\sum_{\alpha\in\mathbb{F}_q^{\times}}{A_{\alpha}}=d_1+1+\sum_{\alpha\in\mathbb{F}_q^{\times}}{A_{\alpha}}=d_1q+d_0    
    \end{align*}
    by $A_{\alpha}\le d_1+1$ we see that there are at least $d_0-1$ elements $\alpha\in\mathbb{F}_q^{\times}$ such that $A_{\alpha}=d_1+1$. Hence, there are at least $d_0$ preimages of $c'$ with minimum weight $d$. 
\end{enumerate}
The proof is now complete.
\end{proof}

\begin{remark}
The first part of Proposition \ref{preimage of minimum weight codeword in Res(C,a)} can also be proved by using Proposition \ref{constant weight} with $C=\left\langle a,b \right\rangle$ and $k=2$.
\end{remark}

\begin{proposition}[c.f. Proposition 4 in \cite{ward1998divisibility}]\label{supplementary}
Let $C$ be a $[g_q(k,d),k,d]_q$ Griesmer code. Then for any codeword $a\in C$ with minimum weight $\mathrm{wt}(a)=d$, there is a supplementary $[g_q(k-1,d),k-1,d]_q$ Griesmer subcode $C'$, i.e., $\left \langle a \right \rangle\cap C'=0$ and $\left \langle a \right \rangle+C'=C$.    
\end{proposition}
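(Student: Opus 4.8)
The plan is to work in the geometric model of Section~\ref{The Geometric Approach} and to realize the supplementary subcode $C'$ as a shortened subcode of $C$. Let $\mathcal{M}=\mathcal{M}_C$ be the multiset of points in $\mathrm{PG}(k-1,q)$ attached to a generator matrix $G=(g_1,\dots,g_n)$ of $C$, and write $a=vG$ for the unique message vector $v\in\mathbb{F}_q^k$. By Proposition~\ref{weight}, the hyperplane $H:=v^{\perp}$ satisfies $\mathcal{M}(\mathcal{P}\setminus H)=\mathrm{wt}(a)=d$. The key structural fact is that shortening $C$ at the coordinates belonging to a single point $\langle w\rangle$ of $\mathcal{M}$ yields a subcode $C_{\langle w\rangle}$ of dimension $k-1$, and that when $\langle w\rangle$ attains the maximum multiplicity $\gamma(\mathcal{M})=\lceil d/q^{k-1}\rceil$ (Corollary~\ref{the maximum point multiplicity of Griesmer codes}), Theorem~\ref{residual code and shortened code}(2) already certifies that $C_{\langle w\rangle}$ is a $[g_q(k-1,d),k-1,d]_q$ Griesmer code. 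Thus the parameters of $C'$ come for free, and the only thing left to engineer is complementarity with $\langle a\rangle$.

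The crucial step is therefore to select a maximum-multiplicity point $\langle w\rangle$ lying \emph{outside} $H$: this is precisely the condition that ejects $a$ from the shortened subcode. Indeed, if $\langle w\rangle\nsubseteq H$ then $v\cdot w\neq 0$, so every coordinate $j$ with $\langle g_j\rangle=\langle w\rangle$ satisfies $a_j=v\cdot g_j\neq 0$, whence $a\notin C_{\langle w\rangle}$. To show such a point exists I would run a counting argument: there are exactly $q^{k-1}$ points of $\mathrm{PG}(k-1,q)$ outside $H$, and their total multiplicity in $\mathcal{M}$ equals $d$. If each of them had multiplicity at most $\gamma-1=\lceil d/q^{k-1}\rceil-1$, the total would be at most $q^{k-1}(\lceil d/q^{k-1}\rceil-1)<q^{k-1}\cdot(d/q^{k-1})=d$, a contradiction. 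Hence some point outside $H$ must attain the maximum multiplicity $\gamma$.

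With $\langle w\rangle$ chosen outside $H$, I would set $C':=C_{\langle w\rangle}$ and finish as follows. Since $a\notin C'$ and $a\neq 0$, we get $\langle a\rangle\cap C'=0$; combined with $\dim\langle a\rangle+\dim C'=1+(k-1)=k=\dim C$, this forces $\langle a\rangle+C'=C$. By Theorem~\ref{residual code and shortened code}(2), $C'$ is a $[g_q(k-1,d),k-1,d]_q$ Griesmer code, which is exactly the required supplementary subcode.

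I expect the counting step---producing a maximum-multiplicity point outside $H$---to be the only genuine obstacle, everything else being an appeal to Theorem~\ref{residual code and shortened code}(2) and a dimension count. The sharp inequality $q^{k-1}(\lceil d/q^{k-1}\rceil-1)<d$ is what makes the argument uniform, regardless of whether $q^{k-1}\mid d$. In the write-up I would state the identification $C_{\langle w\rangle}=\{uG:u\in w^{\perp}\}$ explicitly so that the dimension equality $\dim C_{\langle w\rangle}=k-1$ is transparent, and dispose of the trivial case $k=1$ separately since Theorem~\ref{residual code and shortened code} assumes $k\ge 2$.
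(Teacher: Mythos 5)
Your proof is correct. There is, however, nothing in the paper to compare it against line by line: the paper does not prove Proposition \ref{supplementary} at all, but imports it from Ward's 1998 paper (Proposition 4 there). Your argument is therefore valuable as a self-contained substitute, and it is assembled entirely from the paper's own toolkit: Proposition \ref{weight} identifies the coordinates where $a=vG$ is nonzero with the points of $\mathcal{M}$ outside $H=v^{\perp}$, of total multiplicity $d$; your pigeonhole count over the $q^{k-1}$ points off $H$ produces a point $\langle w\rangle$ outside $H$ of multiplicity at least $\lceil d/q^{k-1}\rceil$, which by Corollary \ref{the maximum point multiplicity of Griesmer codes} must attain $\gamma(\mathcal{M})$; Theorem \ref{residual code and shortened code}(2) then certifies that the shortened subcode $C_{\langle w\rangle}$ is a $[g_q(k-1,d),k-1,d]_q$ Griesmer subcode; and since $v\cdot w\neq 0$ forces $a$ to be nonzero on every coordinate lying over $\langle w\rangle$, you get $a\notin C_{\langle w\rangle}$, after which the dimension count $1+(k-1)=k$ finishes the proof. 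The one step you flagged as the genuine obstacle is indeed sound: $q^{k-1}\bigl(\lceil d/q^{k-1}\rceil-1\bigr)<d$ because $\lceil x\rceil<x+1$, so a maximum-multiplicity point off $H$ always exists, whether or not $q^{k-1}\mid d$. In substance this is Ward's original argument recast in the geometric language of Sections \ref{The Geometric Approach} and \ref{Two proofs of the Griesmer bound} --- pigeonhole on the support of $a$ to find an endpoint avoiding $H$, then shorten there; your counting step is essentially Lemma \ref{lower bound of maximum multiplicity} localized to the complement of a hyperplane. What your rendition buys is that the paper becomes self-contained on this point, with every ingredient already proved internally rather than cited.
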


\begin{lemma}\label{bijection between C' and Res(C,a)}
Let $C$ be a $[g_q(k,d),k,d]_q$ Griesmer code and $C'$ be a $[g_q(k-1,d),k-1,d]_q$ Griesmer subcode of $C$ which is supplementary to a minimum weight codeword $a$. Then the puncturing map $P_a:C\to \mathrm{Res}(C,a)$ restricted to $C'$ is a bijection.     
\end{lemma}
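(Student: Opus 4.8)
The plan is to prove that $P_a$ restricted to $C'$ is a bijection by a dimension-counting argument combined with an injectivity check. First I would recall the setup: by Lemma \ref{property of the residual code}, $\mathrm{Res}(C,a)$ has dimension $k-1$, and $P_a\colon C\to\mathrm{Res}(C,a)$ is a surjective linear map with $\ker(P_a)=\langle a\rangle$. Since $C'$ is a $[g_q(k-1,d),k-1,d]_q$ Griesmer subcode supplementary to $\langle a\rangle$ (as furnished by Proposition \ref{supplementary}), we have $\dim C'=k-1=\dim\mathrm{Res}(C,a)$. So it suffices to establish injectivity of $P_a|_{C'}$, and then bijectivity follows immediately since both spaces have the same finite dimension over $\mathbb{F}_q$ and an injective linear map between equal-dimensional spaces is automatically surjective.

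The injectivity step is the crux, and it reduces to a transparent observation about the kernel. The restriction $P_a|_{C'}$ is a linear map, so its kernel is $C'\cap\ker(P_a)=C'\cap\langle a\rangle$. Because $C'$ is \emph{supplementary} to $\langle a\rangle$, the defining condition $\langle a\rangle\cap C'=0$ tells us precisely that this intersection is trivial. Hence $\ker(P_a|_{C'})=0$, and $P_a|_{C'}$ is injective. Combined with the equality of dimensions from the previous paragraph, this yields that $P_a|_{C'}$ is a bijection between $C'$ and $\mathrm{Res}(C,a)$.

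I do not anticipate a genuine obstacle here, since the essential content is already packaged into the hypotheses: Proposition \ref{supplementary} supplies the complement $C'$ with the key property $\langle a\rangle\cap C'=0$, and Lemma \ref{property of the residual code} identifies the kernel of $P_a$ as exactly $\langle a\rangle$. The only point requiring a little care is to make sure the dimension bookkeeping is correct, namely that $\dim C'$ genuinely equals $\dim\mathrm{Res}(C,a)$ rather than merely being bounded by it; both equal $k-1$, so this is clean. If one preferred to avoid invoking surjectivity of $P_a$ on all of $C$, one could alternatively note directly that $\dim P_a(C')=\dim C'-\dim\ker(P_a|_{C'})=(k-1)-0=k-1=\dim\mathrm{Res}(C,a)$, which forces $P_a(C')=\mathrm{Res}(C,a)$ and hence bijectivity. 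Either route is short, and the whole proof should occupy only a few lines.
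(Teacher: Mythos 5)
Your proof is correct and follows essentially the same route as the paper: both establish $\dim C'=\dim\mathrm{Res}(C,a)=k-1$ and then observe that $\ker(P_a|_{C'})=C'\cap\langle a\rangle=\{0\}$ by the supplementary property, whence bijectivity. The additional remarks on rank--nullity are fine but not needed beyond what the paper already records.
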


\begin{proof}
Since $\dim C'=\dim \mathrm{Res}(C,a)=k-1$, it suffices to show that $\ker(P_a|_{C'})=\{0\}$. This can be seen by noticing that $\ker(P_a|_{C'})=C'\cap \left \langle a \right \rangle =\{0\}$.   
\end{proof}

\begin{theorem}\label{A basis for Griesmer codes}
Let $C$ be a $[g_q(k,d),k,d]_q$ Griesmer code and $q^e|d$. Then there exists a generator matrix $G$ of $C$ such that the first $\min\{e+1,k\}$ rows of $G$ span a Griesmer subcode with constant weight $d$. Moreover, any $k-1$ rows of $G$ generate a $[g_q(k-1,d),k-1,d]_q$ Griesmer subcode.      
\end{theorem}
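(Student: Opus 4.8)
The plan is to induct on the dimension $k$, peeling off one minimum-weight codeword at a time and placing it as the \emph{last} vector of the basis. For the inductive step I would fix a codeword $a\in C$ with $\mathrm{wt}(a)=d$ and invoke Proposition~\ref{supplementary} to obtain a supplementary $[g_q(k-1,d),k-1,d]_q$ Griesmer subcode $C'$ with $C=\langle a\rangle\oplus C'$. Since $C'$ again satisfies the hypotheses ($q^e\mid d$, dimension $k-1$), the inductive hypothesis furnishes a basis $g^{(1)},\dots,g^{(k-1)}$ of $C'$ whose first $\min\{e+1,k-1\}$ members span a constant-weight-$d$ Griesmer subcode and any $k-2$ of which span a $[g_q(k-2,d),k-2,d]_q$ Griesmer subcode. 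I would then take $g^{(1)},\dots,g^{(k-1)},a$ as the candidate basis of $C$.

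The constant-weight requirement transfers almost for free. If $e+1\le k-1$, then $\min\{e+1,k\}=\min\{e+1,k-1\}$ and the first $\min\{e+1,k\}$ vectors already lie in $C'$, so they span a constant-weight-$d$ subcode by the inductive hypothesis; in the remaining range $e\ge k-1$ we have $q^{k-1}\mid d$, and Proposition~\ref{constant weight} shows that the whole of $C$ already has constant weight $d$. Turning to the second clause, deleting $a$ returns $C'$, which is Griesmer by construction, so it remains to treat deletion of an inner vector $g^{(i)}$: the resulting subcode is $\langle a\rangle+C''$, where $C''$ is the $[g_q(k-2,d),k-2,d]_q$ Griesmer subcode of $C'$ obtained by deleting $g^{(i)}$. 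This subcode has dimension $k-1$ and minimum weight $d$, so by the Griesmer bound it is a $[g_q(k-1,d),k-1,d]_q$ code precisely when its effective length is at most $g_q(k-1,d)$, i.e. when $|\mathrm{supp}(a)\setminus\mathrm{supp}(C'')|\le \lceil d/q^{\,k-2}\rceil$. The cleanest bookkeeping is geometric: writing the candidate basis as the rows of a generator matrix with column multiset $\mathcal{M}$, deleting $g^{(i)}$ shortens $C$ at the $i$-th coordinate point and yields effective length $n-\mathcal{M}(\langle e_i\rangle)$; since $n-g_q(k-1,d)=\gamma(\mathcal{M})$ by Corollary~\ref{the maximum point multiplicity of Griesmer codes}, the clause is \emph{equivalent} to $\mathcal{M}(\langle e_i\rangle)=\gamma(\mathcal{M})$ for every $i$, that is, to each coordinate point of the chosen basis having maximum multiplicity.

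A key enabling lemma, which I would establish first, is that the maximum-multiplicity points of any $[g_q(k,d),k,d]_q$ Griesmer code span $\mathrm{PG}(k-1,q)$, so that $k$ linearly independent such points can always be selected as coordinate directions. Indeed, if all maximum-multiplicity points lay in a hyperplane $H=a^{\perp}$, then $\mathrm{wt}(a)\ge d$ forces $\mathcal{M}(H)\le n-d$, while the $q^{k-1}$ points outside $H$ each have multiplicity at most $\gamma(\mathcal{M})-1$; since $\gamma(\mathcal{M})=\lceil d/q^{k-1}\rceil<d/q^{k-1}+1$, one gets $n=\mathcal{M}(H)+\mathcal{M}(\mathrm{PG}(k-1,q)\setminus H)\le (n-d)+q^{k-1}(\gamma(\mathcal{M})-1)<n$, a contradiction.

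The main obstacle is to make the two clauses cohabit in a single basis. The induction fixes $g^{(1)},\dots,g^{(k-1)}$ inside $C'$, so I must choose the minimum-weight codeword $a$ and, via the enabling lemma, the $k-1$ independent maximum-multiplicity points of $C'$ serving as its coordinate directions, so that $a$ does not over-concentrate on any of them. Quantitatively, the $\gamma(\mathcal{M})$ coordinates of $a$ lying outside $\mathrm{supp}(C')$ are forced, and the remaining $d-\gamma(\mathcal{M})$ coordinates must meet each maximum-multiplicity point $P_i$ of $C'$ in at most $\lceil d/q^{k-2}\rceil-\gamma(\mathcal{M})$ positions, which is exactly what the effective-length bound above demands. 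Proving that a compatible choice of $a$ and of the coordinate directions always exists, presumably by an averaging or counting argument over the minimum-weight codewords and the maximum-multiplicity points of $C'$, is where I expect the real difficulty to lie, and I anticipate that the divisibility $q^e\mid d$ enters precisely here to guarantee enough room.
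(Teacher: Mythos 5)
Your reduction of the second clause is correct and matches the paper's bookkeeping: dropping the $i$-th row leaves effective length $n-\mathcal{M}(\langle e_i\rangle)$, so by the Griesmer bound the clause is equivalent to every coordinate point having multiplicity $\gamma(\mathcal{M})=\left\lceil d/q^{k-1}\right\rceil$ (Corollary~\ref{the maximum point multiplicity of Griesmer codes}). Your enabling lemma --- that the maximum-multiplicity points of a Griesmer multiset cannot all lie in a hyperplane --- is also correct as proved, and it is a genuinely nice observation not in the paper: by itself it already yields a basis satisfying the ``any $k-1$ rows'' clause. The handling of the constant-weight clause via Proposition~\ref{constant weight} and the inductive hypothesis is fine as well.

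The gap is exactly where you say you expect it: making the two clauses hold for one and the same basis. Your plan fixes the inductive basis $g^{(1)},\dots,g^{(k-1)}$ of $C'$ and then asks for a minimum-weight codeword $a$ whose support meets each of the $k-1$ coordinate-point column sets of $C'$ (each of size $\left\lceil d/q^{k-2}\right\rceil$, by the inductive clause applied to $C'$) in at most $\left\lceil d/q^{k-2}\right\rceil-\left\lceil d/q^{k-1}\right\rceil$ positions; no argument is given that such an $a$ exists, and the proposed ``averaging or counting argument'' is never carried out. Moreover, the quantifiers are troublesome: $C'$ itself depends on $a$ (it is chosen supplementary to $a$ via Proposition~\ref{supplementary}), and the inductive hypothesis, as stated, hands you \emph{some} good basis of $C'$ with no freedom to prescribe \emph{which} maximum-multiplicity points of $C'$ serve as its coordinate points --- which is what your compatibility condition requires. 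The paper resolves precisely this difficulty by reversing the order of construction: it fixes $a_1$ and $C'$ first and then builds the basis of $C'$ adaptively through residual codes, rather than invoking an inductive hypothesis as a black box. Concretely, Theorem~\ref{residual code and shortened code} makes $\mathrm{Res}(C,a_1)$ a Griesmer code with minimum distance $\left\lceil d/q\right\rceil$; Proposition~\ref{preimage of minimum weight codeword in Res(C,a)} supplies a minimum-weight codeword of $\mathrm{Res}(C,a_1)$ with a weight-$d$ preimage; and Lemma~\ref{bijection between C' and Res(C,a)} transfers that choice into $C'$ as $a_2$. Iterating inside successive residual codes produces a generator matrix with exactly $\left\lceil d/q^{k-1}\right\rceil$ columns proportional to each $e_j$ (your multiplicity condition, hence the second clause) and, simultaneously, effective length $g_q(e+1,d)$ for the span of the first $e+1$ rows, which gives the constant-weight clause via Proposition~\ref{constant weight}. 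So your skeleton and your lemma are sound, but the compatibility step you deferred is the actual content of the theorem, and it is not supplied.
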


\begin{proof}
By Proposition \ref{supplementary}, we can choose a generator matrix $G$ of $C$ such that the first row $a_1$ has minimum weight $d$, moreover, the last $k-1$ rows of $G$ span a $[g_q(k-1,d),k-1,d]_q$ Griesmer subcode $C'$ that is supplementary to the first row $a_1$ of $G$. Note that at this moment we just fix the first row $a_1$ for $G$ so the last $k-1$ rows still need to be determined and will be determined later. Since the effective lengths of $C$ and $C'$ are respectively $n=g_q(k,d)$ and $n'=g_q(k-1,d)$, then there are exactly
\begin{align*}
g_q(k,d)-g_q(k-1,d)=\sum\limits_{i=0}^{k-1}\left \lceil \frac{d}{q^i}  \right \rceil-\sum\limits_{i=0}^{k-2}\left \lceil \frac{d}{q^i}  \right \rceil=\left \lceil \frac{d}{q^{k-1}}  \right \rceil     
\end{align*}   
columns of $G$ having the form $(\alpha,0,\ldots,0)^{\top}\in\mathbb{F}_q^k$ where $\alpha\in\mathbb{F}_q^{\times}$.

Now we consider the residual code of $C$ with respect to the first row $a_1$, namely $\mathrm{Res}(C,a_1)$. By Theorem \ref{residual code and shortened code}, we know that $\mathrm{Res}(C,a_1)$ is a $\left[g_q\left(k-1,\left \lceil \frac{d}{q}  \right \rceil \right),k-1,\left \lceil \frac{d}{q}  \right \rceil \right]_q$ Griesmer code. For this residual code $\mathrm{Res}(C,a_1)$, by Proposition \ref{preimage of minimum weight codeword in Res(C,a)}, there exists a codeword $b\in C$ such that $\mathrm{wt}(\mathrm{res}(b,a_1))=\left \lceil \frac{d}{q}  \right \rceil $ and then we apply the above operation again so that there are exactly
\begin{align*}
g_q\left(k-1,\left \lceil \frac{d}{q}  \right \rceil\right)-g_q\left(k-2,\left \lceil \frac{d}{q}  \right \rceil\right)=\sum\limits_{i=0}^{k-1-1}\left \lceil \frac{\left \lceil \frac{d}{q}  \right \rceil}{q^i}  \right \rceil-\sum\limits_{i=0}^{k-2-1}\left \lceil \frac{\left \lceil \frac{d}{q}  \right \rceil}{q^i}  \right \rceil=\left \lceil \frac{d}{q^{k-1}}  \right \rceil      
\end{align*}
columns of $G$ having the form $(0,\beta,\ldots,0)^{\top}\in\mathbb{F}_q^k$ where $\beta\in\mathbb{F}_q^{\times}$. Moreover, Lemma \ref{bijection between C' and Res(C,a)} guarantees the existence of some $a_2\in C'$ that satisfies the same key properties as $b$ does in the above construction. Iterating this process, we can produce a basis $\{a_1,a_2,\ldots,a_k\}$ of $C$ to obtain a generator matrix $G$ with exactly $\left \lceil \frac{d}{q^{k-1}}  \right \rceil$ columns which are multiple of $e_j=(0,\ldots,0,1,0,\ldots,0)^{\top}$ with $1$ in the $j$-th place and 0 in other places for all $j=1,\ldots,k$. Furthermore, if $e\ge k-1$, by Proposition \ref{constant weight}, $C$ is a constant weight code; if $e<k$, by construction, we see that the effective length of the subcode $\mathrm{span}_{\mathbb{F}_q}\{a_1,a_2,\ldots,a_{e+1}\}$ is $g_q(e+1,d)$. Therefore, $\mathrm{span}_{\mathbb{F}_q}\{a_1,a_2,\ldots,a_{e+1}\}$ is a $[g_q(e+1,d),e+1,d]_q$ Griesmer code and so by Proposition \ref{constant weight}, it is a constant weight subcode of $C$.   

Finally, we show that any $k-1$ rows of $G$ span a Griesmer subcode. Let $S$ be a set of some $k-1$ rows of $G$ and $D$ be the subcode spanned by $S$. If $S$ contains the first row of $G$, then the minimum weight of $D$ is $d$.  Moreover, $D$ has effective length $n-\left \lceil \frac{d}{q^{k-1}}  \right \rceil=g_q(k-1,d)$ by the above construction, and hence this code $D$ must be a $[g_q(k-1,d),k-1,d]_q$ Griesmer subcode of $C$. If $D$ is spanned by the last $k-1$ rows of $G$, then $D$ is exactly the supplementary Griesmer subcode $C'$. We are done.
\end{proof}

For a $[g_q(k,d),k,d]_q$ Griesmer code $C$, we may call the corresponding multiset $\mathcal{M}_C$ a \textit{Griesmer multiset} and $\gamma(\mathcal{M}_C)=\left \lceil d/q^{k-1} \right \rceil $. In \cite{ward2001divisible}, a point $g$ of a Griesmer multiset with the maximum point multiplicity is called an \textit{endpoint}. From the proof of Theorem \ref{A basis for Griesmer codes}, we see that there are at least $k$ endpoints of an $[n,k,d]_q$ Griesmer code. The following proposition gives a better lower bound for the number of endpoints of a Griesmer code.

\begin{proposition}[Lemma 11 in \cite{ward2001divisible}]
Let $C$ be an $[n,k,d]_q$ Griesmer code. Then the number of endpoints of $C$ is at least $g_q(k,t)$, where $t=d-(\left \lceil d/q^{k-1} \right \rceil-1)q^{k-1}$.    
\end{proposition}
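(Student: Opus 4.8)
The plan is to prove the bound by induction on the dimension $k$, classifying the endpoints according to whether they lie on a fixed hyperplane of maximum multiplicity. Throughout, write $m:=\gamma(\mathcal{M}_C)=\lceil d/q^{k-1}\rceil$ by Corollary \ref{the maximum point multiplicity of Griesmer codes}, so that an endpoint is precisely a point of multiplicity $m$, and recall that $t=d-(m-1)q^{k-1}$ satisfies $1\le t\le q^{k-1}$. For the base case $k=1$ the multiset is a single point of multiplicity $d=m$, giving exactly $1=g_q(1,1)=g_q(1,t)$ endpoint, as required.

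For the inductive step, fix a minimum weight codeword $a\in C$ and let $H_0$ be the associated hyperplane, so that $\mathcal{M}_C(H_0)=n-d$ is maximal. First I would count the endpoints lying off $H_0$: there are exactly $q^{k-1}$ points off $H_0$, their total multiplicity is $n-\mathcal{M}_C(H_0)=d$, and every non-endpoint contributes at most $m-1$; hence if $r$ denotes the number of endpoints off $H_0$, then $d\le rm+(q^{k-1}-r)(m-1)=(m-1)q^{k-1}+r$, giving $r\ge t$. Next I would count the endpoints on $H_0$. By Theorem \ref{residual code and shortened code} the residual code $\mathrm{Res}(C,a)$ is a $[g_q(k-1,\lceil d/q\rceil),k-1,\lceil d/q\rceil]_q$ Griesmer code, and since its generator matrix is obtained by keeping exactly the columns of $G$ lying in $H_0$, its multiset is $\mathcal{M}_C|_{H_0}$. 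A direct ceiling computation shows that its maximum multiplicity is again $m$ and that its own $t$-parameter equals $\lceil t/q\rceil$; consequently its endpoints are exactly the endpoints of $C$ lying on $H_0$, and by the induction hypothesis there are at least $g_q(k-1,\lceil t/q\rceil)$ of them.

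Since the endpoints off $H_0$ and those on $H_0$ are disjoint, the total number of endpoints of $C$ is at least $t+g_q(k-1,\lceil t/q\rceil)$, and the proof closes upon verifying the telescoping identity
\[
g_q(k,t)=t+\sum_{i=1}^{k-1}\left\lceil \frac{t}{q^i}\right\rceil=t+g_q\!\left(k-1,\left\lceil \frac{t}{q}\right\rceil\right),
\]
which uses the nested-ceiling rule $\lceil \lceil t/q\rceil/q^{i}\rceil=\lceil t/q^{i+1}\rceil$. The main obstacle is the parameter bookkeeping in the second count: one must confirm that passing to the residual code preserves the maximum multiplicity $m$ (so that its endpoints remain genuine endpoints of $C$) and that the $t$-parameter transforms precisely as $t\mapsto\lceil t/q\rceil$, so that the two partial counts sum to $g_q(k,t)$ exactly rather than merely up to an error term. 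By contrast, the count off $H_0$ is an elementary averaging argument, and the identification of $\mathrm{Res}(C,a)$ with the restriction $\mathcal{M}_C|_{H_0}$ is routine once the generator matrix is arranged as above.
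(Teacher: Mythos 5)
Your proof is correct, but there is nothing in the paper to compare it against: the paper states this proposition as a quotation (Lemma 11 of \cite{ward2001divisible}) and does not reproduce a proof, so the only question is whether your argument stands on its own. It does. The three substantive checks all work out: (i) the averaging count off $H_0$ gives $d\le r+(m-1)q^{k-1}$, hence at least $t$ endpoints off $H_0$; (ii) by Theorem \ref{residual code and shortened code}, $\mathrm{Res}(C,a)$ is a $\left[g_q\left(k-1,\left\lceil d/q\right\rceil\right),k-1,\left\lceil d/q\right\rceil\right]_q$ Griesmer code, so by Corollary \ref{the maximum point multiplicity of Griesmer codes} its maximum multiplicity is $\left\lceil\left\lceil d/q\right\rceil/q^{k-2}\right\rceil=\left\lceil d/q^{k-1}\right\rceil=m$, and its $t$-parameter is $\left\lceil d/q\right\rceil-(m-1)q^{k-2}=\left\lceil t/q\right\rceil$, the last equality holding because $(m-1)q^{k-1}$ is divisible by $q$ and can be pulled out of the ceiling; (iii) the telescoping identity $g_q(k,t)=t+g_q\left(k-1,\left\lceil t/q\right\rceil\right)$ follows from the nested-ceiling rule together with $\left\lceil t/q^0\right\rceil=t$. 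The one step you dismiss as routine---that the multiset of $\mathrm{Res}(C,a)$ is, up to projective equivalence (which preserves multiplicities and hence endpoint counts), the restriction $\mathcal{M}_C|_{H_0}$---is exactly Lemma \ref{Grisemer code restricted to a constant weight hyperplane} of the paper, so you could simply invoke that lemma instead of rearranging the generator matrix by hand. Your residual-code induction is the natural argument behind Ward's lemma and fits cleanly into the paper's geometric framework; the paper itself simply never carries it out.
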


\section{The Algebraic Proof of \text{Theorem \ref{First Theorem}}}\label{The Algebraic Method}

\subsection{Ward's Divisibility Criterion}

In 1990, Ward developed the \textit{weight polarization} method \cite{ward1990weight}, and, as an application, gave a new proof of Ax's theorem \cite{ax1964zeroes}. In 1998, Ward empolyed this technique to prove Theorem \ref{Divisibility over F_p} ($q=p$). However, it seems difficult to apply this method when $q=p^f$ with $f\ge 2$. One of the difficulties lies in analyzing the $p$-adic valuation of a sum involving certain binomial coefficients. To overcome this difficulty, we compute some specific $p$-adic valuations. More precisely, the sum mentioned above has the following form:  
\begin{align*}
c(r,s;1):=\binom{rq}{s}+\binom{rq}{(q-1)+s}+\cdots+\binom{rq}{a(q-1)+s}+\cdots=\sum_{i\equiv s\pmod{q-1}}\binom{rq}{i},
\end{align*}    
where $1\le r,s\le q-1$ and the above summation is in fact a finite sum because the binomial coefficient $\binom{rq}{a(q-1)+s}$ will be 0 if $a\ge q$. The summation of a similar form has been extensively studied, see \cites{sun1995combinatorial,sun2002sum,sun2007congruences}.

Below, we will give a brief overview of weight polarization in the context of $p$-adic numbers.

Let $\mathbb{Z}_p$ be the ring of $p$-adic integers and $\mathbb{Q}_p$ be the field of $p$-adic numbers. For $q=p^f$, $\mathbb{Q}_p$ has a unique unramified extension $\mathbb{Q}_p(\xi_{q-1})$ whose residue class field is $\mathbb{F}_q$, where $\xi_{q-1}$ is a primitive $(q-1)$-st root of unity. Let $\mathbb{Z}_p[\xi_{q-1}]$ be the ring of integers of $\mathbb{Q}_p(\xi_{q-1})$. Note that $\mathbb{Z}_p[\xi_{q-1}]$ is a local ring and the maximal ideal of $\mathbb{Z}_p[\xi_{q-1}]$ is $(p)$. $\mathbb{Q}_p(\xi_{q-1})$ is the splitting field of $X^q-X$ over $\mathbb{Q}_p$, and $\mathbb{Z}_p[\xi_{q-1}]$ contains the full group $U_{q-1}$ of $(q-1)$-st roots of unity. Let $U=U_{q-1}\cup\{0\}$.

We have a canonical map
\[\varphi:\mathbb{Z}_p[\xi_{q-1}]\to \mathbb{Z}_p[\xi_{q-1}]/(p) \cong \mathbb{F}_q.\]
Note that $\varphi$ is a bijection when restricted to $U$. For each $x$ in $\mathbb{Z}_p[\xi_{q-1}]$, there is a unique $T(x)\in U$ satisfying
\begin{align*}
x\equiv T(x)\pmod{p}.   
\end{align*}
The element $T(x)$ is called the \textit{Teichm\"uller representative} of $x$, and the set $U$ consists of Teichm\"uller representatives. When there is no ambiguity, if $x\in \mathbb{F}_q$, we also use $T(x)$ to denote the \textit{Teichm\"uller lift} of $x$, namely the element in $U$ such that $\varphi(T(x)) = x$. For a vector $c=(c_1,\ldots,c_n)\in\mathbb{F}_q^n$, let $T(c)$ be the Teichm\"uller lift of $c$:
\begin{align*}
T(c)=(T(c_1),\ldots, T(c_n)).    
\end{align*}
If $R$ is any commutative ring, and $a_i\in R^n$ for $i=1,\ldots,r$, the component-wise product (sometimes called Schur product or Hadamard product) of $a_1,\ldots,a_r$ will be denoted by $a_1\circ\cdots\circ a_r$. For $a\in R^n$, we will write $\overbrace{a\circ\cdots\circ a}^{r}$ as $a^{\circ(r)}$, and use $\sigma(a)$ to denote the sum of the components of $a$. By $S_p(x)$ we denote the digit sum of $p$-adic expansion of $x\in\mathbb{N}$. By $\nu_p(x)$ we denote the $p$-adic valuation of $x\in \mathbb{Z}_p[\xi_{q-1}]$. The divisibility criterion we will employ is Theorem 5.3 of \cite{ward1990weight}:

\begin{theorem}[Theorem 5.3 in \cite{ward1990weight}]\label{divisibility criterion}
Let $C$ be a linear code in $\mathbb{F}_q^n$ with additive spanning set $B$. Then $p^e$ is a divisor of $C$ if and only if 
\begin{align*}
e\le\frac{1}{p-1}\sum_{i=1}^m S_p(r_i)-f+\nu_p\left(\sigma\left(T(b_1)^{\circ(r_1)}\circ\cdots \circ T(b_m)^{\circ(r_m)}\right)\right)
\end{align*}
for all $M=(r_1,\ldots,r_m)$ with $\sum_{i=1}^m r_i\equiv0\pmod{q-1}$ and all choices of $b_1,\ldots,b_m$ in $B$.
\end{theorem}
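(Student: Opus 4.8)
The plan is to convert the combinatorial weight of a codeword into an exact $p$-adic expression and then read off its valuation monomial by monomial. The starting point is the identity $\mathrm{wt}(c)=\sigma\bigl(T(c)^{\circ(q-1)}\bigr)$, which holds in $\mathbb{Z}$ because each Teichm\"uller coordinate $T(c_i)$ is either $0$ (when $c_i=0$) or a $(q-1)$-st root of unity (when $c_i\ne 0$), so $T(c_i)^{q-1}$ is exactly the indicator of $c_i\ne 0$. Writing a codeword additively as $c=\sum_j \lambda_j b_j$ with $b_j\in B$ and $\lambda_j\in\mathbb{F}_p$, I would lift to $\mathbb{Z}_p[\xi_{q-1}]$ and expand $T(c)^{\circ(q-1)}$ into a combination of the ``monomials'' $T(b_{1})^{\circ r_1}\circ\cdots\circ T(b_{m})^{\circ r_m}$; the whole criterion then becomes a statement about the valuations $\nu_p\bigl(\sigma(\cdots)\bigr)$ of these monomial sums.

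First I would establish the expansion. Since reduction mod $p$ sends $T(x)$ to $x$, the multinomial expansion of $c_i^{q-1}=\bigl(\sum_j\lambda_j (b_j)_i\bigr)^{q-1}$ over $\mathbb{F}_q$ lifts to a congruence
\[
T(c_i)^{q-1}\equiv \sum_{r_1+\cdots+r_m=q-1}\binom{q-1}{r_1,\ldots,r_m}\Bigl(\prod_j T(\lambda_j)^{r_j}\Bigr)\prod_j T((b_j)_i)^{r_j}\pmod p,
\]
and summing over $i$ expresses $\mathrm{wt}(c)$ modulo $p$ through the $\sigma$ of the monomials. The crucial bookkeeping is that, by Legendre's formula, $\nu_p\binom{q-1}{r_1,\ldots,r_m}=\frac{1}{p-1}\bigl(\sum_i S_p(r_i)-S_p(q-1)\bigr)=\frac{1}{p-1}\sum_i S_p(r_i)-f$, using $S_p(q-1)=f(p-1)$; this is precisely the constant appearing in the statement. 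To pass from mod $p$ to mod $p^e$ I would iterate using the Frobenius twist $x\mapsto x^{q}$, which merely permutes the Teichm\"uller set and hence the vectors $T(b_j)$, preserving both $\nu_p$ and $\sigma$; composing these twists raises the total degree to multiples of $q-1$ and, after reducing exponents modulo $q-1$ on the nonzero support, yields exactly the relaxed condition $\sum_i r_i\equiv 0\pmod{q-1}$ while keeping the normalization $-f$ intact.

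With the expansion in hand, sufficiency is a term-by-term estimate: each monomial contributes valuation at least $\nu_p\binom{q-1}{\vec r}+\nu_p(\sigma(\text{monomial}))$, since the Teichm\"uller units $T(\lambda_j)$ do not affect valuation, so the displayed inequality for all monomial data forces $\nu_p(\mathrm{wt}(c))\ge e$ for every $c$. Necessity is the delicate direction, because a single violating monomial could in principle be cancelled by others in the sum for a fixed $c$. To rule this out I would let the scalars $\lambda_j$ range over $\mathbb{F}_p$ and exploit that the coefficient systems $\bigl(\prod_j T(\lambda_j)^{r_j}\bigr)$ are linearly independent as the $\lambda_j$ vary (a Vandermonde-type nonvanishing), so the monomial attaining the minimal value of the threshold cannot be annihilated; this produces a codeword whose weight has valuation equal to that minimum. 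I expect the main obstacle to be exactly this valuation bookkeeping across the Frobenius iteration: proving that the threshold $\frac{1}{p-1}\sum_i S_p(r_i)-f+\nu_p(\sigma(\cdots))$ is \emph{attained} and not merely a lower bound, i.e.\ controlling cancellation among monomials of equal valuation, which is where the genericity of the $\lambda_j$ and the precise analysis of the multinomial coefficients must be combined with care.
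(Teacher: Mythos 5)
First, a framing remark: the paper never proves this statement at all --- it is imported verbatim as Theorem 5.3 of \cite{ward1990weight} --- so your proposal is measured against Ward's original argument rather than anything in this paper. Your starting point is right: $\mathrm{wt}(c)=\sigma\left(T(c)^{\circ(q-1)}\right)$ holds exactly, and Kummer's theorem gives $\nu_p\binom{q-1}{r_1,\ldots,r_m}=\frac{1}{p-1}\sum_{i}S_p(r_i)-f$, which is indeed where the constant in the criterion comes from. The first genuine gap is that your expansion is established only modulo $p$, which can never certify divisibility by $p^e$ for $e\ge 2$, and your proposed repair (``iterate the Frobenius twist $x\mapsto x^q$'') cannot fix this: on the Teichm\"uller set $U$ one has $u^q=u$, so this map is the identity on every vector $T(b_j)$, and applying an automorphism (or the identity) to a mod-$p$ congruence still yields a mod-$p$ congruence --- it cannot raise the modulus. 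The mechanism that actually works is polarization: from $T(c_i)=T(c_i)^{q^n}$ and the lifting-the-exponent property (if $x\equiv y\pmod{p}$ then $x^{q^n}\equiv y^{q^n}\pmod{p^{n+1}}$) one gets $T(c_i)^{q-1}\equiv\left(\sum_j T(\lambda_j)T((b_j)_i)\right)^{(q-1)q^n}\pmod{p^{n+1}}$, and expanding, reducing exponents via $X^q=X$, and letting $n\to\infty$ produces coefficients $c(r_1,\ldots,r_m)=\lim_n c(r_1,\ldots,r_m;n)$ that are $p$-adic limits of sums of \emph{many} multinomial coefficients $\binom{(q-1)q^n}{s_1,\ldots,s_m}$ with $s_j\equiv r_j\pmod{q-1}$, not the single coefficient $\binom{q-1}{r_1,\ldots,r_m}$. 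Your claim that the normalization $-f$ stays ``intact'' after reducing exponents modulo $q-1$ is precisely the nontrivial valuation lemma that must be proved at this point, namely that $S_p(s)\ge S_p(r)$ whenever $s\equiv r\pmod{q-1}$, $s\ge 1$ and $1\le r\le q-1$; you assert it without proof, and it is the heart of the sufficiency direction. (That such coefficient-valuation analysis is delicate when $f>1$ is exactly the point of Section \ref{The Algebraic Method} of this paper, e.g.\ Lemma \ref{valuation of c(q-1,k)} and Proposition \ref{valuation of c(r+p^j,p^j)}.)

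Second, the necessity direction fails as written. You want to isolate a violating monomial by letting the scalars $\lambda_j$ range over $\mathbb{F}_p$ and invoking Vandermonde independence of the coefficient systems $\prod_j T(\lambda_j)^{r_j}$. But for $\lambda\in\mathbb{F}_p^{\times}$ the unit $T(\lambda)$ is a $(p-1)$-st root of unity, so $T(\lambda)^{r}$ depends only on $r\bmod (p-1)$; when $f>1$ there are $q-1$ exponent classes modulo $q-1$ but only $p-1$ available character values, so monomials whose exponents agree modulo $p-1$ yet differ modulo $q-1$ produce \emph{identical} coefficient systems and can never be separated this way (for $q=4$, $p=2$ the prime-field scalars give no information whatsoever). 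Vandermonde separation does work when one may scale by all of $\mathbb{F}_q^{\times}$ --- this is exactly how the present paper applies the criterion in the proof of Theorem \ref{First Theorem}, with a spanning set $B$ closed under $\mathbb{F}_q^{\times}$-multiplication and a Vandermonde matrix in the powers of $\xi_{q-1}$ --- but the theorem is stated for an arbitrary additive spanning set, so your argument needs either that extra closure hypothesis or Ward's inductive polarization argument to close this direction.
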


Let $C$ be a $[g_q(k,d),k,d]_q$ Griesmer code and $\{a_1,\ldots,a_k\}$ be a basis as constructed in Theorem \ref{A basis for Griesmer codes}, i.e., $\mathrm{span}_{\mathbb{F}_q}\{a_1,a_2,\ldots,a_{e+1}\}$ is a constant weight subcode and any $k-1$ of them span a $[g_q(k-1,d),k-1,d]_q$ Griesmer subcode. Note that $\mathrm{span}_{\mathbb{F}_q}\{a_1,a_2\}$ is a $[g_q(2,d),2,d]_q$ subcode of $C$. If $q|d$, then by Proposition \ref{constant weight}, $\mathrm{span}_{\mathbb{F}_q}\{a_1,a_2\}$ is a constant weight code. Let 
\begin{align*}
B=\{\lambda a\mid\lambda\in\mathbb{F}_q^{\times}\}\cup \{\lambda a_2\mid\lambda\in\mathbb{F}_q^{\times}\}\cup\cdots\cup\{\lambda a_k\mid\lambda\in\mathbb{F}_q^{\times}\}    
\end{align*}
where $a=a_1+\alpha a_2\in C\backslash\mathrm{span}_{\mathbb{F}_q}\{a_2,\ldots,a_k\}$ for some $\alpha\in\mathbb{F}_q$. Then $B$ is an additive spanning set of $C$. Moreover, if $q|d$, we have $\mathrm{wt}(a)=d$ for all $\alpha\in\mathbb{F}_q$. Let $A_i=T(a_i)$, and $\alpha'=T(\alpha)$ for short. If we consider $B$ as the additive spanning set of $C$, then $p^e$ is a divisor of $C$ if and only if 
\begin{equation}\label{divisibility condition}
p^{e-\frac{1}{p-1}\sum_{i=1}^k S_p(r_i)+f}\;\big|\;\sigma\left(T(A_1+\alpha' A_2)^{\circ(r_1)}\circ A_2^{\circ(r_2)}\circ \cdots\circ A_k^{\circ(r_k)}\right)   
\end{equation}
for all $M=(r_1,\ldots,r_m)$ with $\sum_{i=1}^m r_i\equiv0\pmod{q-1}$, where all scalar multiples $\lambda$'s are omitted since they do not change the $p$-adic valuation. 

First of all, we demonstrate how to expand the Schur product in (\ref{divisibility condition}), which can be found in \cite{ward1990weight}. Let $1\le r\le q-1$ and $X,Y\in U=U_{q-1}\cup\{0\}$. After expanding $(X+Y)^{rq^n}$ with $X^{q}=X$ and $Y^{q}=Y$, we obtain the following equality:
\begin{equation*}
(X+Y)^{rq^n}-X^r-Y^r=\sum_{k=1}^{q-1}c(r,k;n)X^{q-1+r-k}Y^k,    
\end{equation*}
where $c(r,k;n)$ is a sum of some binomial coefficients. Let $n\to\infty$, we will see that 
\begin{align*}
T(X+Y)^r=X^r+Y^r+\sum_{k=1}^{q-1}c(r,k)X^{q-1+r-k}Y^k,    
\end{align*}
where $c(r,k)=\lim_{n \to \infty} c(r,k;n)$, with the limit taken in the field of $p$-adic numbers. A similar result for Schur products also holds as follows.

\begin{proposition}[c.f. Proposition 8 in \cite{ward1998divisibility}]\label{Expansion}
Let $x,y\in\mathbb{F}_q^n$, and let $T(x)=X$, $T(y)=Y$. Then for $1\le r\le q-1$,
\begin{align*}
T(x+y)^{\circ(r)}=X^{\circ(r)}+Y^{\circ(r)}+\sum_{i=1}^{r-1}c(r,i)X^{\circ(r-i)}\circ Y^{\circ(i)}+\sum_{i=r}^{q-1}c(r,i)X^{\circ(q-1+r-i)}\circ Y^{\circ(i)}.    
\end{align*}
\end{proposition}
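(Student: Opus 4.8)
The plan is to reduce the claimed vector identity to the scalar identity for Teichm\"uller lifts that was just established, namely
\[
T(X+Y)^r=X^r+Y^r+\sum_{k=1}^{q-1}c(r,k)\,X^{q-1+r-k}Y^k
\]
for $X,Y\in U$ and $1\le r\le q-1$, and then to reconcile the two different ranges of $X$-exponents appearing in the statement. Since the Schur product, the scalar powers, and the Teichm\"uller lift all act coordinatewise, it suffices to verify the asserted equality in each coordinate $j$. Writing $X_j=T(x_j)$ and $Y_j=T(y_j)$, the $j$-th coordinate of $T(x+y)^{\circ(r)}$ is $T(x_j+y_j)^r$. Because $X_j+Y_j\equiv x_j+y_j\pmod p$, the Teichm\"uller representative of $X_j+Y_j$ equals $T(x_j+y_j)$, so $T(x_j+y_j)^r=T(X_j+Y_j)^r$ and the scalar identity applies verbatim in coordinate $j$.

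First I would record the key algebraic fact that, for every $X\in U$ and every integer $m\ge1$, one has $X^{q-1+m}=X^m$. Indeed $X^q=X$ forces $X(X^{q-1}-1)=0$ in the integral domain $\mathbb{Z}_p[\xi_{q-1}]$, so either $X=0$ or $X^{q-1}=1$, and in both cases $X^{q-1+m}=X^m$ once $m\ge1$. Applying this to the scalar identity, each summand $c(r,k)\,X_j^{q-1+r-k}Y_j^k$ with $1\le k\le r-1$ has $r-k\ge1$, so its $X_j$-exponent may be lowered to $r-k$; these terms assemble into $\sum_{i=1}^{r-1}c(r,i)X_j^{r-i}Y_j^i$. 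The summands with $r\le k\le q-1$ have $X_j$-exponent $q-1+r-k\ge r\ge1$ and are left untouched, giving $\sum_{i=r}^{q-1}c(r,i)X_j^{q-1+r-i}Y_j^i$. Reassembling coordinatewise then yields exactly the right-hand side of the proposition.

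The one point demanding care -- and the reason the statement splits the sum at $i=r$ rather than anywhere else -- is the boundary term $k=r$, where the reduction $X_j^{q-1}\to X_j^{0}$ is \emph{not} permissible: if $X_j=0$ then $X_j^{q-1}=0$ while $X_j^{0}=1$. Thus the term $c(r,r)X_j^{q-1}Y_j^r$ must retain its exponent $q-1$ and be placed in the second sum, which is precisely how the proposition is phrased. Handling this $m=0$ case correctly, together with the observation that all exponents occurring in both sums are strictly positive so that zero coordinates cause no further trouble, is the only subtle step; everything else is a coordinatewise transcription of the already-proved scalar expansion.
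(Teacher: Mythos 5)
Your proof is correct and follows the route the paper intends: the paper derives the scalar identity $T(X+Y)^r=X^r+Y^r+\sum_{k=1}^{q-1}c(r,k)X^{q-1+r-k}Y^k$ for $X,Y\in U$ and then simply asserts the Schur-product analogue (citing Ward's Proposition 8) without proof, and your argument is exactly the coordinatewise transcription that this assertion relies on. Your explicit treatment of the exponent reduction $X^{q-1+m}=X^m$ --- valid only for $m\ge 1$, which is precisely why the sum must split at $i=r$ and the term $c(r,r)X^{\circ(q-1)}\circ Y^{\circ(r)}$ keeps the exponent $q-1$ --- correctly supplies the one detail the paper leaves implicit.
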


\subsection{The Algebraic Method}

In this subsection, we will prove Theorem \ref{First Theorem} by applying Theorem \ref{divisibility criterion} with the basis $\{a_1,\ldots,a_k\}$ and the corresponding additive spanning set $B$ as mentioned above. Before that, we need some results below on the $p$-adic valuation of binomial coefficients.

\begin{theorem}[Kummer's theorem, \cite{kummer1852erganzungssatze}]\label{Kummer's theorem}
Given integers $n\ge m\ge 0$ and a prime number $p$, the $p$-adic valuation of the binomial coefficient $\binom{n}{m}$ is given by
\begin{align*}
\nu_p\binom{n}{m}=\frac{S_p(m)+S_p(n-m)-S_p(n)}{p-1},    
\end{align*}
which is equal to the number of carries when $m$ is added to $n-m$ in base $p$.
\end{theorem}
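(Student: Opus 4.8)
The plan is to reduce both assertions to Legendre's formula for the $p$-adic valuation of a factorial, namely $\nu_p(n!) = \frac{n - S_p(n)}{p-1}$, and then read off the two equalities. First I would establish Legendre's formula by counting, for each $i \geq 1$, the multiples of $p^i$ among $1, 2, \ldots, n$; there are exactly $\lfloor n/p^i \rfloor$ of them, and summing their contributions over $i$ gives $\nu_p(n!) = \sum_{i \geq 1} \lfloor n/p^i \rfloor$. Writing $n = \sum_{j \geq 0} a_j p^j$ in base $p$, so that $\lfloor n/p^i \rfloor = \sum_{j \geq i} a_j p^{j-i}$, and interchanging the order of summation converts this into $\sum_{j \geq 1} a_j (p^{j-1} + \cdots + 1) = \sum_{j \geq 1} a_j \frac{p^j - 1}{p-1}$, which telescopes to $\frac{n - S_p(n)}{p-1}$.

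With Legendre's formula in hand, the first equality follows immediately from the additivity of $\nu_p$. Since $\binom{n}{m} = n!/(m!(n-m)!)$, we have $\nu_p\binom{n}{m} = \nu_p(n!) - \nu_p(m!) - \nu_p((n-m)!)$, and substituting Legendre's formula three times, the linear terms $n$, $m$, and $n-m$ cancel because $n = m + (n-m)$, leaving exactly $\frac{S_p(m) + S_p(n-m) - S_p(n)}{p-1}$.

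For the carry interpretation, I would analyze the base-$p$ addition of $m$ and $n-m$ directly. Writing $m = \sum_j b_j p^j$ and $n - m = \sum_j d_j p^j$, and letting $c_j$ denote the carry into position $j$ with $c_0 = 0$, the schoolbook addition algorithm expresses the $j$-th digit of $n$ as $a_j = b_j + d_j + c_j - p\, c_{j+1}$; one checks that $b_j + d_j + c_j \leq 2p - 1$, so each carry indeed lies in $\{0,1\}$. Summing the digit relation over all $j$ and using $c_0 = 0$ yields $S_p(n) = S_p(m) + S_p(n-m) - (p-1)K$, where $K = \sum_{j \geq 1} c_j$ is the total number of carries. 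Rearranging gives $\frac{S_p(m) + S_p(n-m) - S_p(n)}{p-1} = K$, which matches the first expression and completes the proof.

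The arguments are entirely elementary, so there is no serious obstacle; the only step requiring genuine care is the bookkeeping in the carry argument, specifically verifying that every carry is $0$ or $1$ and correctly tracking the telescoping of the carry terms when summing $a_j = b_j + d_j + c_j - p\, c_{j+1}$ across all digit positions.
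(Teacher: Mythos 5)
Your proof is correct and complete. Note that the paper itself offers no proof of this statement: it is quoted as a classical result with a citation to Kummer's 1852 paper, and is then used as a black box (e.g.\ in Corollary \ref{shifted p-adic valuation} and Lemma \ref{valuation of binom}). Your argument is the standard textbook derivation — Legendre's formula $\nu_p(n!) = \sum_{i\ge 1}\lfloor n/p^i\rfloor = \frac{n-S_p(n)}{p-1}$, the cancellation $n = m + (n-m)$ giving the digit-sum formula, and the digit-by-digit carry bookkeeping $a_j = b_j + d_j + c_j - p\,c_{j+1}$ with $c_j \in \{0,1\}$ giving the carry interpretation — and each step checks out, so it supplies a valid self-contained proof of what the paper leaves as a citation.
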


\begin{corollary}\label{shifted p-adic valuation}
The $p$-adic valuation of $\binom{np^s}{mp^s}$ is equal to the $p$-adic valuation of $\binom{n}{m}$.    
\end{corollary}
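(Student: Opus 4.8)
The plan is to invoke Kummer's theorem (Theorem \ref{Kummer's theorem}) and reduce everything to a single elementary observation about base-$p$ digits, namely that multiplying a nonnegative integer by $p^s$ shifts its base-$p$ expansion up by $s$ places. This appends $s$ zero digits at the bottom while leaving the remaining digits untouched, so it preserves the digit sum: $S_p(np^s)=S_p(n)$ and $S_p(mp^s)=S_p(m)$.

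First I would record these two identities. Next I would observe that $np^s-mp^s=(n-m)p^s$, so the same shifting fact yields $S_p(np^s-mp^s)=S_p((n-m)p^s)=S_p(n-m)$. With the three digit-sum equalities in hand, I would substitute directly into the formula of Theorem \ref{Kummer's theorem}:
\[
\nu_p\binom{np^s}{mp^s}=\frac{S_p(mp^s)+S_p(np^s-mp^s)-S_p(np^s)}{p-1}=\frac{S_p(m)+S_p(n-m)-S_p(n)}{p-1}=\nu_p\binom{n}{m},
\]
which is exactly the assertion.

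There is essentially no obstacle here: this is a direct corollary of Kummer's theorem, and the only content is the triviality that scaling by a power of $p$ does not change a base-$p$ digit sum. As an alternative I could argue through the carry-counting interpretation recorded in Theorem \ref{Kummer's theorem}: adding $mp^s$ to $(n-m)p^s$ in base $p$ is the $s$-place upward shift of adding $m$ to $n-m$, and since the lowest $s$ digit positions of both summands are zero they generate no carries, so carries occur in precisely the same (shifted) positions in both additions. The two carry counts therefore coincide, giving the same valuation.
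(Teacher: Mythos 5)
Your proposal is correct and follows exactly the paper's own proof: both apply Kummer's theorem and use the fact that $S_p(\ell\cdot p^t)=S_p(\ell)$, together with the observation that $np^s-mp^s=(n-m)p^s$, to equate the two valuations. The carry-counting remark is a nice alternative phrasing but is logically the same argument.
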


\begin{proof}
By Theorem \ref{Kummer's theorem}, we have
\begin{align*}
\nu_p\binom{np^s}{mp^s}=\frac{S_p(mp^s)+S_p(np^s-mp^s)-S_p(np^s)}{p-1}=\frac{S_p(m)+S_p(n-m)-S_p(n)}{p-1}=\nu_p\binom{n}{m}
\end{align*}
since $S_p(\ell\cdot p^t)=S_p(\ell)$ for any positive integers $\ell,t$.
\end{proof}

\begin{lemma}\label{the number of carries in certain z=x+y version 1}
     Let $x,y,z$ be nonnegative integers such that $z=x+y$. If $\nu_p(x) = m$ and $\nu_p(z)=n$ for some $n\ge m$, then $\nu_p\binom{z}{x}\ge n-m$; in other words, there are at least $n-m$ carries in the base-$p$ addition of $z = x+y$.
\end{lemma}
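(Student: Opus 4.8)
The plan is to reduce the whole statement to a single elementary identity, rather than to argue directly with base-$p$ digits. The key observation is the absorption identity
\[
x\binom{z}{x}=z\binom{z-1}{x-1},
\]
valid for all integers $1\le x\le z$. Since $\nu_p(x)=m$ is finite we have $x\ge 1$, and $z=x+y\ge x$ because $y\ge 0$; hence both sides make sense and, crucially, $\binom{z-1}{x-1}$ is a genuine element of $\mathbb{N}_0$.

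First I would take $p$-adic valuations of both sides and use that $\nu_p$ is additive on products, obtaining
\[
\nu_p(x)+\nu_p\binom{z}{x}=\nu_p(z)+\nu_p\binom{z-1}{x-1}.
\]
Because $\binom{z-1}{x-1}$ is a nonnegative integer, its valuation is at least $0$, so rearranging gives
\[
\nu_p\binom{z}{x}=\nu_p(z)-\nu_p(x)+\nu_p\binom{z-1}{x-1}\ \ge\ \nu_p(z)-\nu_p(x)=n-m,
\]
which is exactly the claim. The degenerate case $m=n$ is covered automatically, and if $x=z$ then $y=0$, $\binom{z}{x}=1$, and $n=m$, consistent with the bound.

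Finally I would record the combinatorial reading demanded by the phrasing ``there are at least $n-m$ carries'': by Kummer's theorem (Theorem \ref{Kummer's theorem}), $\nu_p\binom{z}{x}$ equals the number of carries in the base-$p$ addition $x+y=z$, so the inequality is literally a lower bound on that carry count. For a reader who prefers a direct digit argument, one can instead verify that each of the $n-m$ positions $m,m+1,\dots,n-1$ forces a carry out: the bottom $m$ digits of $x$ and the bottom $n$ digits of $z$ all vanish, so there is no carry into position $m$, while $x_m\neq 0$ together with $z_m=0$ forces a carry out of position $m$; an induction on the position then propagates a carry out of every position up to $n-1$, since at each such position the corresponding digit of $z$ is $0$ while the running sum lies strictly between $0$ and $2p$. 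I do not expect a substantive obstacle here; the only care needed is in the bookkeeping of the incoming carry in this digit argument, and the absorption-identity route sidesteps that entirely, leaving only the mild verifications $x\ge 1$ and $x\le z$ that legitimize the identity and the nonnegativity of $\nu_p\binom{z-1}{x-1}$.
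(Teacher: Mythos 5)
Your proof is correct and takes essentially the same route as the paper: both rest on the absorption identity $x\binom{z}{x}=z\binom{z-1}{x-1}$ (the paper writes it as $\binom{z}{x}=\frac{z}{x}\binom{z-1}{x-1}$), the nonnegativity of $\nu_p\binom{z-1}{x-1}$, and Kummer's theorem for the carry-count reading. Your version is marginally tidier in keeping all valuations on integers rather than applying $\nu_p$ to the rational number $z/x$, but this is a cosmetic difference, not a different argument.
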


\begin{proof}
    Consider the $p$-adic valuation of the binomial coefficient $\binom{z}{x}$, we have
    \[\nu_p\binom{z}{x} = \nu_p\left(\frac{z}{x}\binom{z-1}{x-1}\right)\ge \nu_p\left(\frac{z}{x}\right)=\nu_p(z)-\nu_p(x)=n-m.\]
    By Theorem \ref{Kummer's theorem}, the number of carries in the base-$p$ addition of $z = x+y$ is at least $n-m$.
\end{proof}

\begin{proposition}\label{the number of carries in certain z=x+y version 2}
    Let $x,y,z$ be nonnegative integers such that $z=x+y$. Write $x = x'+x'',z = z'+z''$, where $\nu_p(x'),\nu_p(z')\geq n$ and $0\le x'',z''\le p^n-1$. Assume that the $p$-adic expansions of $x''$ and $z''$ are $x''=\sum_{i=0}^{n-1}x_ip^i,z'' = \sum_{i = 0}^{n-1}z_ip^i$. If $x_i\ge z_i$ for $i = 1,\ldots,n-1$ and $x_0>z_0$, then there are at least $n$ carries in the base-$p$ addition of $z = x+y$.
   
\end{proposition}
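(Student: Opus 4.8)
The plan is to work directly with the carry sequence of the base-$p$ addition $z = x + y$ and to show that a carry is produced out of each of the positions $0, 1, \dots, n-1$, which already accounts for $n$ carries. Concretely, I would write the base-$p$ digits as $x = \sum_i x_i p^i$, $y = \sum_i y_i p^i$, and $z = \sum_i z_i p^i$, noting that for $i < n$ these are exactly the digits of $x''$, $y$, and $z''$, and then introduce the carry bits $c_i \in \{0,1\}$ defined by $c_0 = 0$ together with the digit recurrence $x_i + y_i + c_i = z_i + p\,c_{i+1}$ (here $c_{i+1}$ is the carry out of position $i$, and the bound $x_i, y_i \le p-1$ forces $c_{i+1} \in \{0,1\}$). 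Since the total number of carries equals $\sum_{i \ge 0} c_{i+1}$, it suffices to prove that $c_1 = c_2 = \cdots = c_n = 1$.

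This I would establish by a short induction on the position $i$. For the base case at position $0$, the recurrence reads $x_0 + y_0 = z_0 + p\,c_1$; since $x_0 > z_0$ and $y_0 \ge 0$, the left-hand side is at least $z_0 + 1$, so $p\,c_1 \ge 1$ and hence $c_1 = 1$. For the inductive step, suppose $c_i = 1$ for some $1 \le i \le n-1$. Using the hypothesis $x_i \ge z_i$ together with $y_i \ge 0$ and the incoming carry, I get $x_i + y_i + c_i \ge z_i + 1$, and comparing this with $z_i + p\,c_{i+1}$ forces $c_{i+1} = 1$. This closes the induction and shows a carry out of every one of the first $n$ positions, so the addition has at least $n$ carries; by Kummer's theorem (Theorem~\ref{Kummer's theorem}) this is equivalent to the statement $\nu_p\binom{z}{x} \ge n$.

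The point that requires care — and the reason the hypotheses are asymmetric — is the interplay between the strict inequality $x_0 > z_0$ at the lowest digit and the non-strict inequalities $x_i \ge z_i$ above it. The strict inequality is precisely what ignites the first carry; once a carry $c_i = 1$ is present, it supplies the extra unit that pushes $x_i + y_i + c_i$ strictly past $z_i$ even in the boundary case $x_i = z_i$, so the carry propagates undamped through all of the remaining $n-1$ positions. I therefore expect the only genuine subtlety to be the consistent bookkeeping of incoming versus outgoing carries, ensuring that exactly the positions $0, \dots, n-1$ are counted; the inequalities themselves are then immediate from the recurrence.
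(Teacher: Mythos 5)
Your proof is correct, but it takes a genuinely different route from the paper's. You argue directly on the carry sequence: the strict inequality $x_0>z_0$ ignites a carry out of position $0$, and the non-strict inequalities $x_i\ge z_i$ together with the incoming carry keep it propagating through positions $1,\ldots,n-1$, giving $c_1=\cdots=c_n=1$. The induction is sound: at position $0$ the recurrence $x_0+y_0=z_0+p\,c_1$ with $x_0>z_0$ forces $c_1=1$, and at position $i$ the recurrence $x_i+y_i+c_i=z_i+p\,c_{i+1}$ with $c_i=1$ and $x_i\ge z_i$ forces $c_{i+1}=1$; since the digits of $x$ and $z$ below position $n$ are exactly those of $x''$ and $z''$, the hypotheses apply where you use them. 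The paper instead works at the level of digit sums: it sets $w=x-z''$ and proves, by repeatedly applying Kummer's theorem to carry-free additions (which is where the hypotheses $x_i\ge z_i$, $x_0>z_0$ enter), that $\nu_p\binom{z}{x}=\nu_p\binom{z'}{w}$; then, since $\nu_p(w)=0$ (lowest digit $x_0-z_0>0$) and $\nu_p(z')\ge n$, Lemma \ref{the number of carries in certain z=x+y version 1} gives $\nu_p\binom{z'}{w}\ge n$. Your argument is more elementary and self-contained — it needs Kummer's theorem only to translate carries into the valuation statement, and it makes transparent why the hypothesis is strict at position $0$ but not above — whereas the paper's proof reuses its earlier valuation lemma and stays entirely within the digit-sum calculus that the rest of Section \ref{The Algebraic Method} is built on.
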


\begin{proof}
    We claim that the number of carries in the base-$p$ addition of $z = x+y$ is equal to the number of carries in the base-$p$ addition of $z' = w+y$, where $w = x-z'' = x'+(x''-z'')$. 
    Note that if there are no carries in the base-$p$ addition of $c = a+b$, then $S_p(c) = S_p(a)+S_p(b)$ by Theorem \ref{Kummer's theorem}. By repeatedly applying this observation, we see that
    \begin{align*}
        \nu_p\binom{z}{x} &= \frac{S_p(x)+S_p(y)-S_p(z)}{p-1} \\
        &= \frac{S_p(x')+S_p(x'')+S_p(y)-S_p(z')-S_p(z'')}{p-1} \\
        &=\frac{S_p(x')+S_p(x''-z'')+S_p(y)-S_p(z')}{p-1}\\
        &=\frac{S_p(x'+x''-z'')+S_p(y)-S_p(z')}{p-1}\\
        &=\frac{S_p(w)+S_p(y)-S_p(z')}{p-1}\\
        &=\nu_p\binom{z'}{w}.
    \end{align*}
    Hence the claim holds. Note that $w=x'+(x''-z'')$ and the lowest digit of $w$ is $x_0-z_0>0$, thus $\nu_p(w)=0$. Since $\nu_p(z')\ge n$, by Lemma \ref{the number of carries in certain z=x+y version 1}, we know that the number of carries in the base-$p$ addition of $z'=w+y$ is
    \begin{align*}
    \nu_p\binom{z'}{w}\ge\nu_p(z')\ge n.    
    \end{align*} 
    The proof is now complete.
\end{proof}

\begin{lemma}\label{valuation of binom}
For $0\le k\le q-1$, the $p$-adic valuation of $\binom{q-1}{k}$ is 0.    
\end{lemma}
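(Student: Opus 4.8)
The plan is to invoke Kummer's theorem (Theorem \ref{Kummer's theorem}) directly, exploiting the special base-$p$ shape of $q-1$. Since $q=p^f$, the integer $q-1=p^f-1$ has a base-$p$ expansion consisting of exactly $f$ digits, each equal to $p-1$; in particular $S_p(q-1)=f(p-1)$, and every digit is maximal. This maximality is the entire engine of the argument.

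First I would expand $k$ in base $p$ as $k=\sum_{i=0}^{f-1}k_ip^i$ with $0\le k_i\le p-1$, which is legitimate precisely because the hypothesis $0\le k\le q-1$ forces an $f$-digit expansion. Because each digit satisfies $k_i\le p-1$, the subtraction $q-1-k$ can be carried out digit by digit with no borrowing: the base-$p$ expansion of $q-1-k$ is $\sum_{i=0}^{f-1}(p-1-k_i)p^i$, and each complementary digit $p-1-k_i$ still lies in $\{0,\ldots,p-1\}$. Hence $S_p(q-1-k)=\sum_{i=0}^{f-1}(p-1-k_i)=f(p-1)-S_p(k)$.

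Substituting into the digit-sum formula of Theorem \ref{Kummer's theorem} then collapses everything:
\[
\nu_p\binom{q-1}{k}=\frac{S_p(k)+S_p(q-1-k)-S_p(q-1)}{p-1}=\frac{S_p(k)+\bigl(f(p-1)-S_p(k)\bigr)-f(p-1)}{p-1}=0.
\]
Equivalently, in the carry formulation of Kummer's theorem, adding $k$ to its complement $q-1-k$ produces the digit $p-1$ in every position, so no carry ever occurs, and the valuation is $0$.

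I do not expect any genuine obstacle here: the whole content is the single observation that $q-1$ has all base-$p$ digits equal to $p-1$, which simultaneously guarantees that the digit-by-digit subtraction needs no borrow and that the reconstituting addition generates no carry. The only point demanding care is checking that $0\le k\le q-1$ really does yield nonnegative complementary digits $p-1-k_i$, but this is exactly the stated range hypothesis, so the verification is immediate. (If one prefers to avoid Kummer entirely, the same conclusion follows in one line from Lucas' theorem, since $\binom{q-1}{k}\equiv\prod_i\binom{p-1}{k_i}\not\equiv 0\pmod p$; I would keep the Kummer-based version for consistency with the surrounding development.)
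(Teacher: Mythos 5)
Your proof is correct and follows exactly the same route as the paper's: expand $k$ in base $p$, observe that $q-1-k$ has complementary digits $p-1-k_i$ so that no carries occur in the addition $k+(q-1-k)=q-1$, and conclude $\nu_p\binom{q-1}{k}=0$ via Kummer's theorem. The extra remarks (the explicit digit-sum computation and the Lucas-theorem alternative) are fine but add nothing beyond the paper's argument.
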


\begin{proof}
Suppose that the $p$-adic expansion of $k$ is $k=\sum_{i=0}^{f-1}k_ip^i$ where $0\le k_i\le p-1$. Then the $p$-adic expansion of $q-1-k$ is $q-1-k=\sum_{i=0}^{f-1}(p-1-k_i)p^i$. Thus, there is no carry in the base-$p$ addition of $k+(q-1-k)=q-1$, we see that $S_p(k)+S_p(q-1-k)=S_p(q-1)$, and so $\nu_p\binom{q-1}{k}=0$ by Theorem \ref{Kummer's theorem}.
\end{proof}

\begin{lemma}\label{valuation of c(q-1,k)}
For $1\le k\le q-1$, the $p$-adic valuation of $c(q-1,k)$ is 0.
\end{lemma}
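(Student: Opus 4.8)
The plan is to prove the lemma by reducing the computation of $c(q-1,k)$ modulo $p$ to a single binomial coefficient and then invoking Lemma~\ref{valuation of binom}. Since $\nu_p(c(q-1,k))=0$ is equivalent to $c(q-1,k)$ being a $p$-adic unit, i.e. $c(q-1,k)\not\equiv 0\pmod p$, and since $c(q-1,k)=\lim_{n\to\infty}c(q-1,k;n)$ is a $p$-adic limit of the integer finite-stage sums $c(q-1,k;n)$ that collect the coefficients $\binom{(q-1)q^n}{i}$ over indices $i\equiv k\pmod{q-1}$, I would first reduce the problem to computing the residue of these finite sums modulo $p$. The limit is then congruent modulo $p$ to this residue, provided the residue turns out to be independent of $n$, which the digit analysis below confirms.

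Next I would analyze $\binom{(q-1)q^n}{i}$ modulo $p$ using Kummer's theorem (Theorem~\ref{Kummer's theorem}) together with the digit-shift Corollary~\ref{shifted p-adic valuation}. The base-$p$ expansion of $(q-1)q^n=(p^f-1)p^{fn}$ consists of the digit $p-1$ in the $f$ consecutive positions $fn,fn+1,\dots,fn+f-1$ and $0$ everywhere else. Hence the no-carry ($\nu_p=0$) case of Kummer's theorem forces every nonvanishing term to have index $i=q^n m$ with $0\le m\le q-1$, and for such indices the digit-shift gives $\binom{(q-1)q^n}{q^n m}\equiv\binom{q-1}{m}\pmod p$. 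Because $q^n\equiv 1\pmod{q-1}$, the congruence condition $i\equiv k$ becomes $m\equiv k\pmod{q-1}$; in particular the surviving terms, and hence the residue of $c(q-1,k;n)$ modulo $p$, do not depend on $n$.

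For $k$ in the generic range the residue class $k\pmod{q-1}$ is represented by a single admissible value of $m$, namely $m=k$, so the computation collapses to $c(q-1,k)\equiv\binom{q-1}{k}\pmod p$. This is a $p$-adic unit by Lemma~\ref{valuation of binom}, which yields $\nu_p(c(q-1,k))=0$.

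I expect the main obstacle to be the boundary residue class $k\equiv 0\pmod{q-1}$. Here one must track with care which indices $i$ actually occur in the Schur-product expansion of Proposition~\ref{Expansion}: the extreme indices $i=0$ and $i=(q-1)q^n$ correspond precisely to the pure powers $X^{\circ(q-1)}$ and $Y^{\circ(q-1)}$ that are already split off in that expansion, and they fall into this same class and so must be excluded from $c(q-1,k;n)$. Getting this inclusion--exclusion right, and pinning down the correct surviving residue in this class, is the delicate step where the real work lies; the generic classes are handled cleanly by the argument above.
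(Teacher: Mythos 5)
Your treatment of the range $1\le k\le q-2$ is correct, and it is essentially the paper's own argument: both proofs isolate the unique index in the residue class of $k$ modulo $q-1$ that is divisible by a full power of $q$ (the index $kq$ at stage one, $kq^n$ at stage $n$), use Kummer's theorem (Theorem \ref{Kummer's theorem}) to show every other term is divisible by $p$, and identify the surviving term with the unit $\binom{q-1}{k}$ via Corollary \ref{shifted p-adic valuation} and Lemma \ref{valuation of binom}. The only real difference is that the paper first passes to the stage-one sum via Ward's congruence $c(r,k)\equiv c(r,k;1)\pmod{pq}$ from \cite{ward1990weight}, whereas you work at stage $n$ and take the limit; that is fine, since a $p$-adic limit of units is a unit (alternatively, Lucas' theorem makes the residue independent of $n$ — note that Corollary \ref{shifted p-adic valuation} by itself gives only equality of valuations, not the congruence $\binom{(q-1)q^n}{q^n m}\equiv\binom{q-1}{m}\pmod{p}$ you assert, but for a single surviving term the valuation statement suffices).

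The genuine gap is the boundary case $k=q-1$, which is part of the statement and which you explicitly leave unresolved ("the delicate step where the real work lies"). This case is not bookkeeping that can be deferred: for $k\equiv 0\pmod{q-1}$ the \emph{only} admissible index of the form $mq^n$ with $m\ge1$ is $i=(q-1)q^n$ itself, i.e., exactly the top index whose membership you question. The paper's proof works with the sum $c(q-1,k;1)=\sum_{a=0}^{q-1}\binom{(q-1)q}{a(q-1)+k}$ as defined in Section 5.1; for $k=q-1$ this \emph{includes} the term $a=q-1$, namely $\binom{(q-1)q}{(q-1)q}=1$, and that term is the unique $p$-adic unit in the sum — it is what makes Lemma \ref{valuation of c(q-1,k)} true in this case. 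Under the exclusion you propose, every remaining term in this class is divisible by $p$ (Case 1 of the paper's proof applies to all of them), so the quantity you would be computing has valuation at least $1$: concretely, for $q=p=3$ and $k=2$ one gets $\binom{6}{2}+\binom{6}{4}=30\equiv0\pmod{3}$, while including $\binom{6}{6}=1$ gives the unit $31$. So along the route you sketch the statement is not merely harder at $k=q-1$; it becomes false, and no amount of careful inclusion--exclusion will recover it. You have in fact put your finger on a real tension in the setup (Proposition \ref{Expansion} splits off the pure term $Y^{\circ(q-1)}$, while the summation used in the paper's proof of the lemma counts the corresponding binomial coefficient inside $c(q-1,q-1;1)$), but flagging this tension is not a proof: to establish the lemma as stated you must adopt the paper's definition of $c(q-1,k;1)$, i.e., the normalization under which the top term is counted, and your proposal commits to the opposite choice.
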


\begin{proof}
By the proof of Lemma 3.1 in \cite{ward1990weight}, we know that for $1\le r,k\le q-1$, 
\begin{align*}
c(r,k)\equiv c(r,k;1)\pmod{pq}.   
\end{align*}   
When $r=q-1$, we consider  
\begin{align*}
c(q-1,k;1)=\binom{(q-1)q}{k}+\binom{(q-1)q}{(q-1)+k}+\cdots+\binom{(q-1)q}{a(q-1)+k}+\cdots    
\end{align*}
where the summation is restricted to $0\le a\le q-1$. We consider the summands in two cases:
\begin{enumerate}
    \item If $q\nmid a(q-1)+k$, then $\nu_p(a(q-1)+k)<f$ and so by Lemma \ref{the number of carries in certain z=x+y version 1},
\begin{align*}
\nu_p\binom{(q-1)q}{a(q-1)+k}\ge \nu_p\left((q-1)q\right)-\nu_p(a(q-1)+k)\ge 1.    
\end{align*} 
\item If $q|a(q-1)+k$, then $a=k$ since $0\le a\le q-1$. We have
\begin{align*}
\nu_p\binom{(q-1)q}{k(q-1)+k}&=\nu_p\binom{(q-1)q}{kq}\\
&=\nu_p\binom{q-1}{k} & \text{by Corollary \ref{shifted p-adic valuation}}\\
&=0 & \text{by Lemma \ref{valuation of binom}}  
\end{align*}
\end{enumerate}
It follows that $\nu_p(c(q-1,k;1))=0$, and so $\nu_p(c(q-1,k))=0$. 
\end{proof}

\begin{proposition}\label{valuation of c(r+p^j,p^j)}
Let $j,r$ be two integers such that $0\le j\le f-1$ and $0\le r\le q-1-p^j$. Then the $p$-adic valuation of $c(r+p^j,p^j)$ is equal to $\nu_p\binom{r+p^j}{p^j}$.
\end{proposition}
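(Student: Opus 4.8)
The plan is to evaluate the finite sum
$c(r+p^j,p^j;1)=\sum_{a\ge 0}\binom{bq}{a(q-1)+p^j}$, where I write $b:=r+p^j$ (so $1\le b\le q-1$) and $m_a:=a(q-1)+p^j$, and then transfer the answer to $c(r+p^j,p^j)$ through the congruence $c(r,k)\equiv c(r,k;1)\pmod{pq}$ recalled in the proof of Lemma~\ref{valuation of c(q-1,k)}. Since $b\le q-1$ has at most $f$ base-$p$ digits, the carry chain produced by adding $p^j$ to $r$ cannot exit position $f-1$, so $n_0:=\nu_p\binom{b}{p^j}\le f-1-j$; in particular $n_0<f+1=\nu_p(pq)$, and it suffices to prove $\nu_p\bigl(c(r+p^j,p^j;1)\bigr)=n_0$, for then the higher-valuation error term coming from the congruence does not affect the answer. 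By the ultrametric inequality this reduces to producing one term of valuation exactly $n_0$ and showing every other (nonzero) term has strictly larger valuation. The distinguished term is $a=p^j$: there $m_{p^j}=p^jq$, so $\binom{bq}{p^jq}$ has valuation $\nu_p\binom{b}{p^j}=n_0$ by Corollary~\ref{shifted p-adic valuation}. The nonzero terms are exactly those with $0\le a\le b$, and I must beat $n_0$ for every $a\ne p^j$ in this range.

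The case $0\le a<p^j$ is short. Here $m_a=aq+(p^j-a)$ has its lowest nonzero base-$p$ digit in positions $0,\dots,f-1$, so $\nu_p(m_a)=\nu_p(p^j-a)\le j$, while $\nu_p(bq)=f+\nu_p(b)\ge f$. Lemma~\ref{the number of carries in certain z=x+y version 1} then produces at least $\nu_p(bq)-\nu_p(m_a)\ge f-j$ carries, whence $\nu_p\binom{bq}{m_a}\ge f-j>f-1-j\ge n_0$.

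The real work is the range $p^j<a\le b$. Writing $c:=a-p^j$ and $\mu:=\nu_p(c)$, I split $m_a=(a-1)q+(q-c)$ and $w_a:=bq-m_a=(b-a)q+c$ into their high and low $q$-blocks, which occupy disjoint digit positions. Using $S_p(q-c)=f(p-1)-S_p(c-1)$, the identity $S_p(c)-S_p(c-1)=1-\mu(p-1)$, and Kummer's theorem (Theorem~\ref{Kummer's theorem}), this digit-sum bookkeeping collapses to the clean formula
\[
\nu_p\binom{bq}{m_a}=\bigl(f-\nu_p(a-p^j)\bigr)+\nu_p(a)+\nu_p\binom{b}{a}.
\]
I expect establishing this identity to be the main technical obstacle; once it is in hand the rest is a comparison of valuations. (For the subfamily with $p\nmid(a-p^j)$ one can instead read off at least $n_0+1$ carries directly from Proposition~\ref{the number of carries in certain z=x+y version 2}, since then $m_a$ has nonzero units digit while $bq$ has its lowest $f$ digits equal to $0$; but the identity treats all $a$ uniformly.)

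Granting the identity, I finish by comparing $\mu$ with $j$. If $\mu\le j$ then $\nu_p(a)\ge\mu$, so $\bigl(f-\mu\bigr)+\nu_p(a)\ge f$ and $\nu_p\binom{bq}{m_a}\ge f>n_0$. If $\mu>j$, then $a=p^j+c$ has digit $1$ in position $j$ and digit $0$ in all lower positions; meanwhile, writing $n_0=t$, the quantity $n_0$ is precisely the length of the run of digits $p-1$ of $r$ starting at position $j$, so $b=r+p^j$ satisfies $b_j=\dots=b_{j+t-1}=0$. Subtracting $a$ from $b$ in base $p$ therefore forces a borrow out of each of the positions $j,\dots,j+t-1$, giving $\nu_p\binom{b}{a}\ge t=n_0$ by Theorem~\ref{Kummer's theorem}; combined with $f-\mu\ge 1$ this yields $\nu_p\binom{bq}{m_a}\ge 1+n_0>n_0$ (the case $n_0=0$ being immediate from $f-\mu\ge 1$). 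In every case the valuation strictly exceeds $n_0$, so $a=p^j$ is the unique minimizer and $\nu_p\bigl(c(r+p^j,p^j;1)\bigr)=n_0$. The mod-$pq$ congruence then upgrades this to $\nu_p\bigl(c(r+p^j,p^j)\bigr)=\nu_p\binom{r+p^j}{p^j}$, as asserted.
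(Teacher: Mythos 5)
Your proposal is correct, and although it shares the paper's overall skeleton---reduce to $c(r+p^j,p^j;1)$ via the congruence modulo $pq$, single out $a=p^j$ as the term of valuation exactly $\nu_p\binom{r+p^j}{p^j}$ using Corollary \ref{shifted p-adic valuation}, and show every other nonzero term has strictly larger valuation---your treatment of the remaining terms takes a genuinely different route. The paper splits according to whether $p^{j+1}$ divides $a(q-1)+p^j$, handles the easy cases with the ratio trick $\binom{n}{k}=\tfrac{n}{k}\binom{n-1}{k-1}$, and in the hard case (writing $a=a'p^{j+1}+p^j$, $a'=b'p^t$ with $p\nmid b'$) carries out an explicit digit expansion and invokes the technical Proposition \ref{the number of carries in certain z=x+y version 2}. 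You instead split at $a<p^j$ versus $a>p^j$, dispatch the first range with Lemma \ref{the number of carries in certain z=x+y version 1}, and for $a>p^j$ derive an exact valuation formula $\nu_p\binom{bq}{a(q-1)+p^j}=\bigl(f-\nu_p(a-p^j)\bigr)+\nu_p(a)+\nu_p\binom{b}{a}$ with $b=r+p^j$. I verified this identity: setting $c=a-p^j$, the splittings $m_a=(a-1)q+(q-c)$ and $bq-m_a=(b-a)q+c$ into disjoint $q$-blocks, together with Kummer's theorem (Theorem \ref{Kummer's theorem}), $S_p(q-c)=f(p-1)-S_p(c-1)$, and the identity $S_p(x)-S_p(x-1)=1-\nu_p(x)(p-1)$ applied to \emph{both} $x=c$ and $x=a$ (you state it only for $c$, but the $\nu_p(a)$ term in your formula comes from applying it to $a$), collapse the digit sums exactly as claimed; so the step you flag as the main obstacle is a short computation from the ingredients you list. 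Your comparison step is also sound: for $\nu_p(a-p^j)\le j$ the formula gives valuation at least $f>n_0$, and for $\nu_p(a-p^j)>j$ the borrow count at positions $j,\dots,j+n_0-1$ gives $\nu_p\binom{b}{a}\ge n_0$, hence a total of at least $n_0+1$. What your route buys is an exact valuation for every term rather than a lower bound (in the paper's hard case your formula reproduces precisely its estimate $s+f-1-t$, in the paper's notation), and it removes any need for Proposition \ref{the number of carries in certain z=x+y version 2}, replacing the paper's most delicate digit bookkeeping on the large binomial by a transparent borrow argument on the small binomial $\binom{b}{a}$; the paper's approach, by contrast, never needs a closed formula and gets by with one-sided estimates throughout.
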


\begin{proof}
Again by the proof of Lemma 3.1 in \cite{ward1990weight}, $c(r,k)\equiv c(r,k;1)\pmod{pq}$, we have 
\begin{align*}
c(r+p^j,p^j)\equiv \binom{(r+p^j)q}{p^j}+\binom{(r+p^j)q}{(q-1)+p^j}+\cdots+\binom{(r+p^j)q}{a(q-1)+p^j}+\cdots\pmod{pq}    
\end{align*}  
where the summation is restricted to $0\le a\le q-1$. One term of the above sum is 
\begin{align*}
\binom{(r+p^j)q}{p^j(q-1)+p^j}=\binom{(r+p^j)q}{p^jq},
\end{align*}
which has the same $p$-adic valuation as $\binom{r+p^j}{p^j}$ by Corollary \ref{shifted p-adic valuation}. Thus, to prove that the $p$-adic valuation of $c(r+p^j,p^j)$ is equal to $\nu_p\binom{r+p^j}{p^j}$, it suffices to prove that for $a\ne p^j$, 
\begin{align*}
\nu_p\binom{(r+p^j)q}{a(q-1)+p^j}>\nu_p\binom{r+p^j}{p^j}.    
\end{align*}
Write $r$ in base $p$ so that $r=\sum_{i=0}^{f-1} r_ip^{i}$. Since $\nu_p\binom{r+p^j}{p^j}$ is equal to the number of carries when $p^j$ is added to $r$ in base $p$, by Theorem \ref{Kummer's theorem}, we see that 
\begin{align*}
s:=\nu_p\binom{r+p^j}{p^j}\le f-j-1.    
\end{align*}
Note that $s$ is equal to the number of consecutive $p-1$ starting from $r_j$, that is, $r_j=p-1,\ldots,r_{s+j-1}=p-1$, but $r_{s+j}\ne p-1$.\\

\noindent\textbf{Case 1}. When $\nu_p\binom{r+p^j}{p^j}=0$: 

Since 
\begin{align*}
\binom{(r+p^j)q}{a(q-1)+p^j}=\frac{(r+p^j)q}{a(q-1)+p^j}\binom{(r+p^j)q-1}{a(q-1)+p^j-1}  
\end{align*}
and for $a\ne p^j$, the numerator of $\frac{(r+p^j)q}{a(q-1)+p^j}$ is divisible by $q$ but the denominator of $\frac{(r+p^j)q}{a(q-1)+p^j}$ is not divisible by $q$, we see that 
\begin{align*}
\nu_p\binom{(r+p^j)q}{a(q-1)+p^j}\ge 1.    
\end{align*}
Hence, the $p$-adic valuation of $c(r+p^j,p^j)$ is equal to $\nu_p\binom{r+p^j}{p^j}$.\\

\noindent\textbf{Case 2}. If $\nu_p\binom{r+p^j}{p^j}>0$, then the $j$-th digit in the $p$-adic expansion of $r+p^j$ is 0.

\begin{enumerate}
    \item When $p^{j+1}\nmid a(q-1)+p^j$: 
    
    Since
\begin{align*}
\binom{(r+p^j)q}{a(q-1)+p^j}=\frac{(r+p^j)q}{a(q-1)+p^j}\binom{(r+p^j)q-1}{a(q-1)+p^j-1},
\end{align*}
we have 
\begin{align*}
\nu_p\binom{(r+p^j)q}{a(q-1)+p^j}\ge\nu_p\left(\frac{(r+p^j)q}{a(q-1)+p^j}\right)\ge f-j>s.    
\end{align*}
\item When $p^{j+1}\mid a(q-1)+p^j$ and $a\ne p^j$: 

In this case, $p^j-a\equiv a(q-1)+p^j\equiv0\pmod{p^{j+1}}$ since $p^{j+1}|q$, so we can write $a=a'p^{j+1}+p^j$ where $1\le a'<p^{f-j-1}$. Assume that $a'=bp^t$ where $p\nmid b$ and $t=\nu_p(a')$. Since $a=bp^{j+t+1}+p^j<q=p^f$, we have $f-j-1-t>0$.

We compute
\begin{align*}
\nu_p\binom{(r+p^j)q}{a(q-1)+p^j} &=\nu_p\binom{(r+p^j)q}{(a'p^{j+1}q+p^jq-a'p^{j+1}-p^j)+p^j} &\text{since $a=a'p^{j+1}+p^j$}\\
&= \nu_p\binom{(r+p^j)p^f}{bp^{j+1+t+f}+p^{j+f}-bp^{j+1+t}} &\text{since $a'=bp^t$}\\
&= \nu_p\binom{(r+p^j)p^{f-j-1-t}}{bp^f+(p^{f-1-t}-b)} &\text{by Corollary \ref{shifted p-adic valuation}}
\end{align*}

\begin{claim*}
We have
\begin{align*}
\nu_p\binom{(r+p^j)p^{f-j-1-t}}{bp^f+(p^{f-1-t}-b)}>s.    
\end{align*}
\end{claim*}

\textbf{Proof of the Claim.} Suppose that $b=b_{f-j-t-2}p^{f-j-t-2}+\cdots+b_0$ where $b_0\ne0$, or write it as $\left \langle b_{f-j-t-2},\ldots, b_0 \right \rangle_p$. Similarly, $r=r_{f-1}p^{f-1}+\cdots+r_0$ and by the assumption before, we have 
\begin{align*}
r=\langle \overbrace{r_{f-1},\ldots, r_{s+j}}^{f-s-j}, \overbrace{p-1,\ldots, p-1}^{s},\overbrace{r_{j-1},\ldots,r_0}^{j} \rangle_p.    
\end{align*}
Then 
\begin{align*}
r+p^j&= \langle \overbrace{r_{f-1},\ldots, r_{s+j}+1}^{f-s-j},\overbrace{ 0,\ldots, 0}^{s},\overbrace{r_{j-1},\ldots,r_0}^{j} \rangle_p, \\ 
(r+p^j)p^{f-j-1-t}&= \langle \overbrace{r_{f-1},\ldots, r_{s+j}+1}^{f-s-j},\overbrace{0,\ldots, 0}^{s},\overbrace{r_{j-1},\ldots,r_0}^{j},\overbrace{0,\ldots,0}^{f-j-1-t} \rangle_p.
\end{align*}
On the other hand, we have
\begin{align*}
&bp^f+(p^{f-1-t}-b)\\
=&\langle \overbrace{b_{f-j-t-2},\ldots,b_0}^{f-j-t-1},\overbrace{0,\ldots,0}^{t+1},\overbrace{p-1,\ldots,p-1}^{j}, \overbrace{p-1-b_{f-j-t-2},\ldots,p-1-b_1,p-b_0}^{f-j-t-1} \rangle_p
\end{align*}
Finally, applying Proposition \ref{the number of carries in certain z=x+y version 2} with $z=(r+p^j)p^{f-j-1-t}, x=bp^f+(p^{f-1-t}-b)$ and $y=z-x$ for $n=s+j+(f-j-1-t)=s+f-1-t$, we obtain
\begin{align*}
\nu_p\binom{(r+p^j)p^{f-j-1-t}}{bp^f+(p^{f-1-t}-b)}\ge n=s+f-1-t>s.
\end{align*}
\end{enumerate}
Now we can conclude that $\nu_p(c(r+p^j,p^j))=\nu_p\binom{r+p^j}{p^j}$.
\end{proof}

Finally, we are ready to prove Theorem \ref{First Theorem}. We restate Theorem \ref{First Theorem} below.

\begin{tthmbis}{First Theorem}
Let $C$ be a $[g_q(k,d),k,d]_q$ Griesmer code where $q=p^f$. If $q^e|d$, then $p^{e} | \mathrm{wt}(c)$ for all $c\in C$.
\end{tthmbis}

\begin{proof}[Proof of Theorem \ref{First Theorem}]
As before, we will apply Theorem \ref{divisibility criterion} with the basis $\{a_1,\ldots,a_k\}$ constructed in Theorem \ref{A basis for Griesmer codes} and the corresponding additive spanning set $B$, and then use the divisibility in (\ref{divisibility condition}) to prove Theorem \ref{First Theorem}. We proceed by induction on $k$. When $k\le e +1$, the conclusion is true by Proposition \ref{constant weight}. Assume that Theorem \ref{First Theorem} holds for Griesmer codes of dimension less than $k$, where $k\ge2$. Next we prove that Theorem \ref{First Theorem} holds for Griesmer codes of dimension $k$, or equivalently, the divisibility in (\ref{divisibility condition}) holds for all $(r_1,\ldots,r_k)$. Also, we denote $C_1=\mathrm{span}_{\mathbb{F}_q}\{a_2,\ldots,a_k\}$. The following proof will be divided into three steps.

\vspace{0.3cm}

\noindent\textbf{The First Step.} First, we will prove that when $r_1=q-1$, the divisibility in (\ref{divisibility condition}) holds. 

When $e=0$, Theorem \ref{First Theorem} is clearly true so the divisibility in (\ref{divisibility condition}) holds. Assume that $e\ge 1$ so $\mathrm{wt}(a_1+\alpha a_2)=d$ for any $\alpha\in\mathbb{F}_q$ by the argument before. When $r_1=q-1$, we observe
\begin{align*}
\sigma\left(T(A_1+\alpha' A_2)^{\circ(q-1)}\circ A_2^{\circ(r_2)}\circ\cdots\circ A_k^{\circ(r_k)}\right)=\sigma\left(\mathrm{proj}(A_2)^{\circ(r_2)}\circ\cdots\circ \mathrm{proj}(A_k)^{\circ(r_k)}\right),
\end{align*}
where $\mathrm{proj}(A_i)=T(\mathrm{proj}(a_i,a_1+\alpha a_2))$ with $i=2,\ldots,k$ and $\mathrm{proj}(a_i,a_1+\alpha a_2)$ is the projected codeword in $\mathrm{Proj}(C_1,a_1+\alpha a_2)$. Since $\mathrm{Res}(C,a_1+\alpha a_2)$ is an $\left[n-d,k-1,\frac{d}{q} \right]_q$ Griesmer code with $q^{e-1}|\frac{d}{q}$ by Theorem \ref{residual code and shortened code}, so it is $p^{e-1}$-divisible by induction. As a result, $\mathrm{Res}(C_1,a_1+\alpha a_2)$ is $p^{e-1}$-divisible. Note that the subcode $C_1$ is a $[g_q(k-1,d),k-1,d]_q$ Griesmer subcode, so it is $p^e$-divisible also by induction. Therefore, $\mathrm{Proj}(C_1,a_1+\alpha a_2)$ is $p^{e-1}$-divisible and hence
\begin{align*}
p^{e-1-\frac{1}{p-1}\sum_{i=2}^k S_p(r_i)+f} \;\big|\;\sigma\left(\mathrm{proj}(A_2)^{\circ(r_2)}\circ\cdots\circ \mathrm{proj}(A_k)^{\circ(r_k)}\right),   
\end{align*}
that is
\begin{align}\label{divisibility of r_1=q-1}
p^{e+f-1-\frac{1}{p-1}(S_p(q-1)+\sum_{i=2}^k S_p(r_i))+f} \;\big|\;\sigma\left(T(A_1+\alpha' A_2)^{\circ(q-1)}\circ A_2^{\circ(r_2)}\circ\cdots\circ A_k^{\circ(r_k)}\right),
\end{align}
which implies
\begin{align*}
p^{e-\frac{1}{p-1}(S_p(q-1)+\sum_{i=2}^k S_p(r_i))+f} \;\big|\;\sigma\left(T(A_1+\alpha' A_2)^{\circ(q-1)}\circ A_2^{\circ(r_2)}\circ\cdots\circ A_k^{\circ(r_k)}\right).    
\end{align*}
Hence the case $r_1=q-1$ is done.

\vspace{0.3cm}

\noindent\textbf{The Second Step.} Next we will prove that when $r_1+r_2\ge q-1$, the divisibility in (\ref{divisibility condition}) also holds. 

Suppose that $r_1+r_2\ge q-1$. Applying Proposition \ref{Expansion} with $X=T(A_1+ \alpha' A_2)$ and $Y=\beta' A_2$, we have
\begin{align*}
&\sigma\left(T(A_1+\alpha' A_2+\beta' A_2)^{\circ(q-1)}\circ A_2^{\circ(r_1+r_2-(q-1))}\circ A_3^{\circ(r_3)}\circ\cdots\circ A_k^{\circ(r_k)}\right) \\
=\;&\sigma\left(T(A_1+\alpha' A_2)^{\circ(q-1)}\circ A_2^{\circ(r_1+r_2-(q-1))}\circ A_3^{\circ(r_3)}\circ\cdots\circ A_k^{\circ(r_k)}\right)\\
&+(\beta')^{q-1}\sigma\left(A_2^{\circ(r_1+r_2-(q-1))}\circ A_3^{\circ(r_3)}\circ\cdots\circ A_k^{\circ(r_k)}\right)\\
&+\sum_{i=1}^{2q-r_1-r_2-2}(\beta')^i c(q-1,i)\sigma\left(T(A_1+\alpha' A_2)^{\circ(q-1-i)}\circ A_2^{\circ(r_1+r_2-(q-1)+i)}\circ A_3^{\circ(r_3)}\circ\cdots\circ A_k^{\circ(r_k)}\right)\\
&+\sum_{i=2q-r_1-r_2-1}^{q-2}(\beta')^{i} c(q-1,i)\sigma\left(T(A_1+\alpha' A_2)^{\circ(q-1-i)}\circ A_2^{\circ(r_1+r_2-2(q-1)+i)}\circ A_3^{\circ(r_3)}\circ\cdots\circ A_k^{\circ(r_k)}\right)\\
&+(\beta')^{q-1} c(q-1,q-1)\sigma\left(T(A_1+\alpha' A_2)^{\circ(q-1)}\circ A_2^{\circ(r_1+r_2-(q-1))}\circ A_3^{\circ(r_3)}\circ\cdots\circ A_k^{\circ(r_k)}\right).
\end{align*}
Since the divisibility in (\ref{divisibility condition}) holds for $r_1=0$ and $(\ref{divisibility of r_1=q-1})$ holds, we have
\begin{align*}
p^{s_1}\;&\big|\;\sigma\left(T(A_1+\alpha' A_2+\beta' A_2)^{\circ(q-1)}\circ A_2^{\circ(r_1+r_2-(q-1))}\circ A_3^{\circ(r_3)}\circ\cdots\circ A_k^{\circ(r_k)}\right)&\text{by (\ref{divisibility of r_1=q-1})}\\
p^{s_1} \;&\big|\;\sigma\left(T(A_1+\alpha' A_2)^{\circ(q-1)}\circ A_2^{\circ(r_1+r_2-(q-1))}\circ A_3^{\circ(r_3)}\circ\cdots\circ A_k^{\circ(r_k)}\right) &\text{by (\ref{divisibility of r_1=q-1})}\\
p^{s_2}\;&\big|\;\sigma\left(A_2^{\circ(r_1+r_2-(q-1))}\circ A_3^{\circ(r_3)}\circ\cdots\circ A_k^{\circ(r_k)}\right)&\text{by (\ref{divisibility condition})}
\end{align*}
where
\begin{align*}
s_1&=e+f-1-\frac{1}{p-1}\left(S_p(q-1)+S_p(r_1+r_2-(q-1))+\sum_{i=3}^k S_p(r_i)\right)+f\\
&=e-1-\frac{1}{p-1}\left( S_p(r_1+r_2-(q-1))+\sum_{i=3}^k S_p(r_i)\right)+f\\
s_2&=e-\frac{1}{p-1}\left(S_p(r_1+r_2-(q-1))+\sum_{i=3}^k S_p(r_i)\right)+f=s_1+1
\end{align*}
and hence for any $\beta\in\mathbb{F}_q^{\times}$, $p^{s_1}$ divides $S(\beta'):=\sum_{i=1}^{q-1}(\beta')^i x_i$, where
\begin{align*}
x_i&:=c(q-1,i)\sigma\left(T(A_1+\alpha' A_2)^{\circ(q-1-i)}\circ A_2^{\circ(r_1+r_2-(q-1)+i)}\circ A_3^{\circ(r_3)}\circ\cdots\circ A_k^{\circ(r_k)}\right),\\
x_j&:=c(q-1,j)\sigma\left(T(A_1+\alpha' A_2)^{\circ(q-1-j)}\circ A_2^{\circ(r_1+r_2-2(q-1)+j)}\circ A_3^{\circ(r_3)}\circ\cdots\circ A_k^{\circ(r_k)}\right),\\
x_{q-1}&:=c(q-1,q-1)\sigma\left(T(A_1+\alpha' A_2)^{\circ(q-1)}\circ A_2^{\circ(r_1+r_2-(q-1))}\circ A_3^{\circ(r_3)}\circ\cdots\circ A_k^{\circ(r_k)}\right),
\end{align*}
with $i=1,\ldots, 2q-r_1-r_2-2,j=2q-r_1-r_2-1,\ldots,q-2$. 

Therefore, for every $\beta\in\mathbb{F}_q^{\times}$, there exists a $c(\beta')\in\mathbb{Z}_p[\xi_{q-1}]$ such that $S(\beta')=c(\beta')\cdot p^{s_1}$. Consider the following linear system of equations in $x_i$'s: 
\begin{align*}
\begin{pmatrix}
 1 & 1 & \cdots & 1\\
 \xi_{q-1} & \xi_{q-1}^2 & \cdots & \xi_{q-1}^{q-1}\\
 \vdots & \vdots &  & \vdots\\
\xi_{q-1}^{q-2}  & \left(\xi_{q-1}^{q-2}\right)^2 & \cdots & \left(\xi_{q-1}^{q-2}\right)^{q-1}
\end{pmatrix}\begin{pmatrix}
x_1 \\
x_2 \\
\vdots \\
x_{q-1}
\end{pmatrix}=\begin{pmatrix}
S(1) \\
S(\xi_{q-1}) \\
\vdots \\
S\left(\xi_{q-1}^{q-2}\right)
\end{pmatrix}    
\end{align*}
Note that $\varphi$ maps units to $\mathbb{F}_q^{\times}$, $(p)$ to 0 and the above Vandermonde determinant is a unit in $\mathbb{Z}_p[\xi_{q-1}]$, as its image of $\varphi$ in $\mathbb{F}_q$ is nonzero. Thus, $p^{s_1}|x_i$ for $i=1,\ldots,q-1$. Since $p\nmid c(q-1,i)$ for $i=1,\ldots,2q-r_1-r_2-2$ by Lemma \ref{valuation of c(q-1,k)}, we obtain
\begin{align*}
p^{s_1} \;\big|\;\sigma\left(T(A_1+\alpha' A_2)^{\circ(q-1-i)}\circ A_2^{\circ(r_1+r_2-(q-1)+i)}\circ A_3^{\circ(r_3)}\circ\cdots\circ A_k^{\circ(r_k)}\right).
\end{align*}
In particular, for $i=q-1-r_1$, we get
\begin{align*}
p^{s_1} \;\big|\; \sigma\left(T(A_1+\alpha' A_2)^{\circ(r_1)}\circ A_2^{\circ(r_2)}\circ A_3^{\circ(r_3)}\circ\cdots\circ A_k^{\circ(r_k)}\right),
\end{align*}
which implies
\begin{align*}
p^{e-\frac{1}{p-1}\sum_{i=1}^k S_p(r_i)+f} \;\big|\; \sigma\left(T(A_1+\alpha' A_2)^{\circ(r_1)}\circ A_2^{\circ(r_2)}\circ A_3^{\circ(r_3)}\circ\cdots\circ A_k^{\circ(r_k)}\right)
\end{align*}
because the following equivalent inequalities hold (the explanations are given below):
\begin{align*}
&e-\frac{1}{p-1}\sum_{i=1}^k S_p(r_i)+f\le s_1=e-1-\frac{1}{p-1}\left(S_p(r_1+r_2-(q-1))+\sum_{i=3}^k S_p(r_i)\right)+f \\   
&\Longleftrightarrow\quad S_p(q-1)+S_p(r_1+r_2-(q-1))-(f-1)(p-1)\le S_p(r_1)+S_p(r_2) \\
&\Longleftrightarrow\quad 2S_p(q-1)-S_p(2(q-1)-r_1-r_2)-(f-1)(p-1)\le S_p(r_1)+S_p(r_2)\\
&\Longleftrightarrow\quad S_p(q-1-r_1)+S_p(q-1-r_2)-S_p(2(q-1)-r_1-r_2)\le (f-1)(p-1).
\end{align*}
The first equivalence follows from the direct simplification and the fact that $S_p(q-1)=f(p-1)$. The second equivalence follows from Lemma \ref{valuation of binom} with $(r_1+r_2-(q-1))+(2(q-1)-r_1-r_2)=q-1$ where $0\le r_1+r_2-(q-1)\le q-1$. Similarly, the third equivalence follows from Lemma \ref{valuation of binom} with $r_1+(q-1-r_1)=q-1$ and $r_2+(q-1-r_2)=q-1$ where $0\le r_1,r_2 \le q-1$. The last inequality holds because
\begin{align*}
\frac{S_p(q-1-r_1)+S_p(q-1-r_2)-S_p(2(q-1)-r_1-r_2)}{p-1}
\end{align*}
represents the number of carries in the base-$p$ addition of $(q-1-r_1)+(q-1-r_2)=2(q-1)-r_1-r_2$, which is less than $f$ since $2(q-1)-r_1-r_2\le q-1$.

This proves that the divisibility in (\ref{divisibility condition}) holds for $r_1+r_2\ge q-1$.

\vspace{0.3cm}

\noindent\textbf{The Third Step.} Now we use the backwards induction on $r_1$. Assume that for $r_1\ge r+1$, the divisibility in (\ref{divisibility condition}) holds. We will prove that for $r_1=r$, the divisibility in (\ref{divisibility condition}) holds. 

Let $\nu_p(r_2)=j$, i.e., $p^j||r_2$. Since when $r_2=0$, the divisibility in (\ref{divisibility condition}) clearly holds by induction hypothesis. Thus, it remains to consider $r_2\ge p^j$. If $r+p^j\ge q-1$, then $r+r_2\ge r+p^j\ge q-1$, and so the divisibility in (\ref{divisibility condition}) holds for $(r,r_2,\ldots,r_k)$. If $r+p^j<q-1$, since $r+p^j\ge r+1$, by induction the divisibility in (\ref{divisibility condition}) holds for $(r+p^j,r_2-p^j,r_3,\ldots,r_k)$, we know that 
\begin{align*}
p^{s_3} \;\big|\; \sigma\left(T(A_1+\alpha' A_2+\beta' A_2)^{\circ(r+p^j)}\circ A_2^{\circ(r_2-p^j)}\circ A_3^{\circ(r_3)}\circ\cdots\circ A_k^{\circ(r_k)}\right),    
\end{align*}
where $s_3=e-\frac{1}{p-1}(S_p(r+p^j)+S_p(r_2-p^j)+\sum_{i=3}^k S_p(r_i))+f$. By Proposition \ref{Expansion} and a similar argument of solving a system of linear equations, we see that
\begin{align*}
p^{s_3} \;\big|\; c(r+p^j,p^j)\sigma\left(T(A_1+\alpha' A_2)^{\circ(r)}\circ A_2^{\circ(r_2)}\circ A_3^{\circ(r_3)}\circ\cdots\circ A_k^{\circ(r_k)}\right).   
\end{align*}
By Theorem \ref{Kummer's theorem} and Proposition \ref{valuation of c(r+p^j,p^j)}, we have 
\begin{align*}
\nu_p(c(r+p^j,p^j))=\nu_p\binom{r+p^j}{p^j}=\frac{S_p(p^j)+S_p(r)-S_p(r+p^j)}{p-1}   
\end{align*}  
Therefore, we obtain
\begin{align*}
p^{e-\frac{1}{p-1}(S_p(r)+\sum_{i=2}^k S_p(r_i))+f} \;\big|\; \sigma\left(T(A_1+\alpha' A_2)^{\circ(r)}\circ A_2^{\circ(r_2)}\circ A_3^{\circ(r_3)}\circ\cdots\circ A_k^{\circ(r_k)}\right)    
\end{align*}
since 
\begin{align*}
S_p(r+p^j)+S_p(r_2-p^j)+S_p(p^j)+S_p(r)-S_p(r+p^j)= S_p(r)+S_p(r_2)    
\end{align*}
where we have used $\nu_p(r_2)=j$. Thus, the divisibility in (\ref{divisibility condition}) holds for $(r,r_2,r_3,\ldots,r_k)$.  

The proof of Theorem \ref{First Theorem} is now complete.
\end{proof}

\section{The Geometric Proof of Theorem \ref{Second Theorem}}\label{Geometric Approach}

We also need a lemma which will play a similar role as the following inheritance property of the divisibility. 

\begin{proposition}[Lemma 3.4 in \cite{kurz2021divisible}]\label{Divisibility inherits}
Let $C$ be an $[n,k]_q$ code with effective length $n$ and $\mathcal{M}_C$ be the corresponding multiset. If $C$ is $\Delta$-divisible for some $\Delta$ divisible by $q$, then the multiset $\mathcal{M}_C|_H$ is $\Delta/q$-divisible for any hyperplane $H\in\mathcal{H}$ in $\mathrm{PG}(k-1,q)$.
\end{proposition}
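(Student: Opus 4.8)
The plan is to reduce everything to the congruence characterization of divisibility in Corollary \ref{congruence corollary} and then run a short double-counting argument over a pencil of hyperplanes. First I would unwind what $\Delta/q$-divisibility of $\mathcal{M}_C|_H$ actually asserts: viewing the hyperplane $H$ as a projective space $\mathrm{PG}(k-2,q)$ in its own right, its ``hyperplanes'' are precisely the $(k-2)$-dimensional subspaces $S$ of $V$ with $S\subseteq H$, and the total mass of the restricted multiset is $\mathcal{M}_C|_H(H)=\mathcal{M}_C(H)$. By Corollary \ref{congruence corollary} it therefore suffices to prove that
\[
\mathcal{M}_C(S)\equiv \mathcal{M}_C(H)\pmod{\Delta/q}
\]
for every such $S$, where I have used that $\mathcal{M}_C|_H(S)=\mathcal{M}_C(S)$ since $S\subseteq H$.

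Next I would fix $S$ and consider the pencil $H=H_0,H_1,\ldots,H_q$ of the $q+1$ hyperplanes of $\mathrm{PG}(k-1,q)$ containing $S$ (these correspond to the $q+1$ one-dimensional subspaces of the quotient $V/S$). The crucial combinatorial identity comes from counting incidences between the points of $\mathcal{M}_C$ and the members of the pencil: a point lying in $S$ is counted in all $q+1$ hyperplanes, whereas a point $P$ outside $S$ lies in exactly one member of the pencil, namely $\langle S,P\rangle$. Summing the masses then yields
\[
\sum_{i=0}^{q}\mathcal{M}_C(H_i)=(q+1)\,\mathcal{M}_C(S)+\bigl(n-\mathcal{M}_C(S)\bigr)=n+q\,\mathcal{M}_C(S),
\]
where $n=\mathcal{M}_C(\mathcal{P})$ is the effective length.

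Finally I would invoke the hypothesis. Since $C$ is $\Delta$-divisible, Corollary \ref{congruence corollary} gives $\mathcal{M}_C(H_i)\equiv n\pmod{\Delta}$ for every $i$. Summing over the pencil and comparing with the identity above gives $(q+1)n\equiv n+q\,\mathcal{M}_C(S)\pmod{\Delta}$, that is, $q\bigl(\mathcal{M}_C(S)-n\bigr)\equiv 0\pmod{\Delta}$. Because $q\mid\Delta$, I may cancel a factor of $q$ to obtain $\mathcal{M}_C(S)\equiv n\pmod{\Delta/q}$; applying the same congruence to the member $H_0=H$ gives $\mathcal{M}_C(H)\equiv n\pmod{\Delta/q}$, and combining the two yields the required $\mathcal{M}_C(S)\equiv\mathcal{M}_C(H)\pmod{\Delta/q}$.

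The step I expect to be the main (if modest) obstacle is the first one: correctly interpreting divisibility of the \emph{restricted} multiset $\mathcal{M}_C|_H$, in particular pinning down its ambient projective space, its hyperplanes, and its total mass $\mathcal{M}_C(H)$ rather than $n$, so that Corollary \ref{congruence corollary} applies cleanly. Once that bookkeeping is in place, the pencil identity and the cancellation of $q$ are routine, with the cancellation relying on precisely the hypothesis $q\mid\Delta$.
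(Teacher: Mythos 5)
Your proof is correct. Note that the paper itself gives no proof of this proposition; it is imported verbatim as Lemma 3.4 of the cited survey by Kurz, and your argument --- reducing $\Delta/q$-divisibility of $\mathcal{M}_C|_H$ via Corollary \ref{congruence corollary} to the congruence $\mathcal{M}_C(S)\equiv\mathcal{M}_C(H)\pmod{\Delta/q}$ for codimension-$2$ subspaces $S\subseteq H$, then double-counting over the pencil of $q+1$ hyperplanes through $S$ to get $\sum_i\mathcal{M}_C(H_i)=n+q\,\mathcal{M}_C(S)$ and cancelling $q$ using $q\mid\Delta$ --- is precisely the standard proof of that lemma, so nothing further is needed.
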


If the multiset $\mathcal{M}$ is $\Delta$-divisible with $q^r|\Delta$. Then by induction, we can prove that the multiset $M|_U$ is $\Delta/q^j$-divisible for any subspace $U$ of codimension $j\le r$. 

Below, we will use an embedding from $\mathrm{PG}(k-2, q)$ to $\mathrm{PG}(k-1, q)$ by adding a zero in the first coordinate of each projective point.

\begin{lemma}\label{Grisemer code restricted to a constant weight hyperplane}
Let $C$ be a $[g_q(k,d),k,d]_q$ Griesmer code and $G$ be a generator matrix of $C$. Suppose that $h=aG\in C$ is a minimum weight codeword where $a^{\top}\in\mathbb{F}_q^k$, and $H_a:=a^{\perp}$ is the hyperplane consisting of vectors that are orthogonal to $a$. Then $\mathcal{M}_C|_{H_a}$ is projectively equivalent to a Griesmer multiset corresponding to the $[g_q(k-1,\left \lceil d/q \right \rceil),k-1,\left \lceil d/q \right \rceil]_q$ Griesmer code $\mathrm{Res}(C,h)$, where the latter is considered to be embedded in $\mathrm{PG}(k-1,q)$.   
\end{lemma}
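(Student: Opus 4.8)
The plan is to reduce to a convenient choice of coordinates and then recognize the restricted multiset as literally the multiset of the residual code under the stated embedding. First I would exploit the freedom in the generator matrix: replacing $G$ by $PG$ for an invertible $P\in\mathrm{GL}_k(\mathbb{F}_q)$ does not change the code $C$, and it induces a collineation of $\mathrm{PG}(k-1,q)$ under which $\mathcal{M}_C$ and the hyperplane $H_a$ transform compatibly. Choosing $P$ to have first row $a$, the codeword $h=aG$ becomes the first row of the new generator matrix, so with respect to it $h$ corresponds to the vector $e_1=(1,0,\ldots,0)$ and $H_a$ becomes the coordinate hyperplane $\{x_1=0\}$. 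Hence we may assume from the outset that the first row of $G$ equals $h$, that $a=e_1$, and that $H_a=\{(x_1,\ldots,x_k)^{\top}:x_1=0\}$.

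Under this normalization, a column $g_i$ of $G$ lies on $H_a$ exactly when its first entry vanishes, i.e.\ when $h_i=(aG)_i=g_{1,i}=0$, which is precisely the set of coordinates $i\notin\mathrm{supp}(h)$ retained in forming $\mathrm{Res}(C,h)$. Next I would produce a generator matrix for the residual code directly from $G$: writing the rows of $G$ as $g^{(1)}=h,g^{(2)},\ldots,g^{(k)}$, we have $\mathrm{res}(g^{(1)},h)=0$, while $\mathrm{res}(g^{(2)},h),\ldots,\mathrm{res}(g^{(k)},h)$ span $\mathrm{Res}(C,h)$; since $\dim\mathrm{Res}(C,h)=k-1$ by Lemma \ref{property of the residual code}, these $k-1$ rows form a generator matrix. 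For each retained coordinate $i$ (so $g_{1,i}=0$), the corresponding column of this matrix is $(g_{2,i},\ldots,g_{k,i})^{\top}$, that is, exactly the column $g_i\in H_a$ with its vanishing first coordinate deleted.

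This is the crux: the ``delete the zero first coordinate'' map is inverse to the embedding $\mathrm{PG}(k-2,q)\hookrightarrow\mathrm{PG}(k-1,q)$ described before the lemma, so under this identification the multiset of columns of the residual generator matrix coincides, point by point and with identical multiplicities, with the sub-multiset of $\mathcal{M}_C$ supported on $H_a$, namely $\mathcal{M}_C|_{H_a}$. I would note in passing that no column of the residual generator matrix is zero: each retained column comes from a nonzero $g_i$ with $g_{1,i}=0$, so its remaining entries $(g_{2,i},\ldots,g_{k,i})^{\top}$ do not all vanish. Hence the residual code has full length, consistent with its length $g_q(k,d)-d=g_q(k-1,\lceil d/q\rceil)$. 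Finally, Theorem \ref{residual code and shortened code} supplies the parameters: since $h$ has minimum weight $d$, $\mathrm{Res}(C,h)$ is a $[g_q(k-1,\lceil d/q\rceil),k-1,\lceil d/q\rceil]_q$ Griesmer code, completing the identification of $\mathcal{M}_C|_{H_a}$ with the corresponding Griesmer multiset.

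I do not anticipate a serious obstacle here; the content is essentially the observation that restricting the column multiset to the hyperplane $a^{\perp}$ is the geometric avatar of passing to the residual code $\mathrm{Res}(C,h)$. The only points demanding care are the bookkeeping in the change-of-basis reduction, namely ensuring that $H_a$ and the embedding align with the coordinate hyperplane $\{x_1=0\}$, and the verification that multiplicities are preserved under the coordinate-deletion identification; both are routine once the normalization $a=e_1$ is in place.
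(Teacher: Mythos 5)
Your proposal is correct and takes essentially the same route as the paper's proof: both reduce by an invertible change of basis carrying $a$ to $e_1^{\top}$ (so $H_a$ becomes the coordinate hyperplane $\{x_1=0\}$), identify the columns of the normalized generator matrix lying on that hyperplane with the columns of a generator matrix of $\mathrm{Res}(C,h)$ via the coordinate-deletion/embedding correspondence, and invoke Theorem \ref{residual code and shortened code} for the Griesmer parameters of the residual code. The only cosmetic difference is that you phrase the change of basis as an up-front ``without loss of generality'' reduction (and additionally check full length of the residual code), whereas the paper carries the matrix $R$ through explicitly and verifies the projective equivalence $\mathcal{M}_{G'}|_{H_{e_1^{\top}}}=R\,\mathcal{M}_G|_{H_a}$ at the end.
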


\begin{proof}
Since $\mathrm{GL}_k(q)$ acts transitively on $\mathbb{F}_q^k\backslash\{0\}$, there exists an $R\in\mathrm{GL}_k(q)$ such that 
\begin{align*}
a=\overbrace{(1,0,\ldots,0)}^{k}R = e_1^{\top}R.        
\end{align*}
Denote $G=(g_1,g_2,\ldots,g_n)$ and $G' = RG = (g_1',g_2'.\ldots,g_n')$, where $g_i' = Rg_i$. Note that $G'$ is another generator matrix of $C$ and the first row of $G'$ is the minimum weight codeword $h$, thus for $i\notin \mathrm{supp}(h)$, the first entry of $g_i'$ is zero. Let $G''$ be the matrix obtained by deleting the first row and the columns indexed by $\mathrm{supp}(h)$ from $G'$. Then $G''$ is a generator matrix of $\mathrm{Res}(C,h)$, which is a $[g_q(k-1,\left \lceil d/q \right \rceil),k-1,\left \lceil d/q \right \rceil]_q$ Griesmer code. 

Let $\mathcal{M}_{G},\mathcal{M}_{G'}$ and $\mathcal{M}_{G''}$ be the corresponding multisets of $G,G'$ and $G''$, respectively. We claim that $\mathcal{M}_{G'}|_{H_{e_1^{\top}}}$ is the image of $\mathcal{M}_{G''}$ via the embedding from $\mathrm{PG}(k-2,q)$ to $\mathrm{PG}(k-1,q)$, since
\begin{align*}
\mathcal{M}_{G'}|_{H_{e_1^{\top}}}=\left\{\left\langle g_i' \right\rangle :  e_1^{\top} g_i'=0 \right\}=\left\{\left\langle g_i' \right\rangle : g_i'(1) = 0 \right\}.
\end{align*}

It remains to show that $\mathcal{M}_G|_{H_a}$ is projectively equivalent to $\mathcal{M}_{G'}|_{H_{e_1^{\top}}}$. Note that
\begin{align*}
    \mathcal{M}_G|_{H_a}=\{\left\langle g_i \right\rangle : a g_i=0 \} =\{\left\langle g_i \right\rangle : e_1^{\top}g_i'=0 \},  
\end{align*}
so $\mathcal{M}_{G'}|_{H_{e_1^{\top}}} = R\mathcal{M}_G|_{H_a}$, and thus $\mathcal{M}_G|_{H_a}$ is  projectively equivalent to $\mathcal{M}_{G'}|_{H_{e_1^{\top}}}$.
\end{proof}

We are now in a position to give the proof of Theorem \ref{Second Theorem} using the geometric approach. Below we restate Theorem \ref{Second Theorem} for the convenience of the reader.

\begin{tthmbis}{Second Theorem}
Let $C$ be a $[g_q(k,d),k,d]_q$ Griesmer code where $q=p^f$. If $p^e|d$, then $\left\lceil p^{e-(f-1)(q-2)} \right\rceil$ is a divisor of $C$.    
\end{tthmbis}

\begin{proof}[Proof of Theorem \ref{Second Theorem}]
First, when $e\le(f-1)(q-2)$, the conclusion immediately follows since $\left\lceil p^{e-(f-1)(q-2)} \right\rceil=1$. 

It remains to consider the case when $e>(f-1)(q-2)$, we consider the dimension $k$ in two cases as follows.

\vspace{0.3cm}

\noindent\textbf{Case 1.} $k\le q-1$: 

\textbf{Subcase 1-1.} If $e\ge f(q-2)$ we have $q^{k-1}|q^{q-2}|p^e|d$. Then, by Proposition \ref{constant weight}, $C$ is a constant weight code so $p^e|\mathrm{wt}(c)$ for all $c\in C$. 

\textbf{Subcase 1-2.} If $(f-1)(q-2)<e<f(q-2)$, then by Theorem \ref{First Theorem}$'$, we have $p^{\lfloor e/f\rfloor}|\mathrm{wt}(c)$ for all $c\in C$. Since
\begin{align*}
\frac{e}{f}\ge e-(f-1)(q-2),    
\end{align*}
we see that $p^{e-(f-1)(q-2)} |\mathrm{wt}(c)$ for all $c\in C$.

\vspace{0.3cm}

\noindent\textbf{Case 2.} $k\ge q$: 

\textbf{Subcase 2-1.} If $(f-1)(q-2)<e<f(q-2)$, by the same argument as above, we can prove that $p^{e-(f-1)(q-2)} |\mathrm{wt}(c)$ for all $c\in C$.

\textbf{Subcase 2-2.} Let $e\ge f(q-2)$. We assume that the conclusion is true for Griesmer codes of dimension less than $k$. By Theorem \ref{A basis for Griesmer codes}, we can choose a generator matrix $G$ of $C$ such that any $k-1$ rows of $G$ span a $[g_q(k-1,d),k-1,d]_q$ Griesmer subcode of $C$. According to Corollary \ref{congruence corollary}, the goal is to prove that for the multiset $\mathcal{M}$ corresponding to the columns of $G$, the following congruence
\begin{align}\label{congruence equation 1}
\mathcal{M}(H)\equiv n\pmod{p^{e-(f-1)(q-2)}}
\end{align}
holds for every hyperplane $H\in\mathcal{H}$. 

Since Theorem \ref{Second Theorem} holds for Griesmer codes of dimension less than $k$ by the induction hypothesis, we can apply this to these $k$ Griesmer subcodes $C_i$, where $C_i$ is spanned by all rows of $G$ except the $i$-th row, $i=1,\ldots,k$. For any $h=(h_1,\ldots,h_k)$ with some $h_i=0$, since $h G\in C_i$, we have $p^{e-(f-1)(q-2)}|\mathrm{wt}(h G)$. Define
\begin{align*}
H_{b}=\{(x_1,x_2,\ldots,x_k)\in\mathbb{F}_q^k:b_1x_1+b_2x_2+\cdots+b_kx_k=0\},    
\end{align*}
where $b=(b_1,b_2,\ldots,b_k)$. Then by Proposition \ref{weight},
\begin{align*}
n-\mathcal{M}(H_{h})=\mathrm{wt}(h G)\equiv0\pmod{p^{e-(f-1)(q-2)}},
\end{align*}
thus congruence (\ref{congruence equation 1}) holds for the hyperplane $H_{h}$.

Next, we will prove that for any hyperplane $H_{u}$ where $u=(u_1,\ldots,u_k)$ with $u_i\ne0$ for all $i$, congruence $(\ref{congruence equation 1})$ still holds. Suppose that $\mathbb{F}_q=\{\gamma_1,\gamma_2,\ldots,\gamma_{q}=0\}$. Since $k\ge q$, consider the following system of two linear equations:
\begin{numcases}{}
u_1 x_1+u_2 x_2+\cdots+u_k x_k&=\;\;0 \label{equation 1}\\
\gamma_1 u_1 x_1+\gamma_2 u_2 x_2+\cdots+\gamma_{q-1} u_{q-1} x_{q-1}&=\;\;0 \label{equation 2}
\end{numcases}
which defines a $(k-2)$-dimensional subspace $H''$. Note that there are totally ${k-(k-2)\brack (k-1)-(k-2)}_q=q+1$ hyperplanes containing $H''$ and two of them are the hyperplanes $H_{u}$ and $H_q$ defined by (\ref{equation 1}) and (\ref{equation 2}) respectively. The remaining $q-1$ of them are exactly the hyperplane $H_i$ defined by the equation $(\ref{equation 2})-\gamma_i\times(\ref{equation 1})$, where $i=1,2,\ldots,q-1$; that is: 
\begin{align*}
H_i:=\left\{(x_1,x_2,\ldots,x_k)\in\mathbb{F}_q^k: \sum_{i=1}^{q-1}\gamma_i u_ix_i-\gamma_i \sum_{j=1}^k u_j x_j=0 \right\}, \quad i=1,2,\ldots,q-1.
\end{align*}
Moreover, by induction we see that the congruence (\ref{congruence equation 1}) holds for hyperplanes $H_1,H_2,\ldots,H_q$ because the coefficient of $x_i$ in $(\ref{equation 2})-\gamma_i\times(\ref{equation 1})$ is exactly 0 as $k\ge q$. Therefore, we have
\begin{align}\label{divisibility of certain hyperplanes}
\mathcal{M}(H_i)\equiv n\pmod{p^{e-(f-1)(q-2)}},\quad i=1,2,\ldots,q.
\end{align}
Note that $p^{e}|d$ with $e\ge f(q-2)$, so $q^{q-2}|d$. Then by Theorem \ref{A basis for Griesmer codes}, the first $(q-1)$ rows $\{a_1,a_2,\ldots,a_{q-1}\}$ span a Griesmer subcode with constant weight $d$, and hence 
\begin{align*}
(\gamma_1 u_1,\gamma_2 u_2,\ldots,\gamma_{q-1} u_{q-1},0,\ldots,0)G\in\mathrm{span}_{\mathbb{F}_q}\{a_1,a_2,\ldots,a_{q-1}\}    
\end{align*}
has minimum weight $d$. By Lemma \ref{Grisemer code restricted to a constant weight hyperplane}, we know that the multiset $\mathcal{M}|_{H_q}$ corresponds to a $[g_q(k-1,d/q),k-1,d/q]_q$ Griesmer code, which is $\left\lceil p^{e-(f-1)(q-2)}/q\right\rceil$-divisible by induction. Therefore, we have
\begin{align*}
\mathcal{M}|_{H_q}(H_1)\equiv\#\mathcal{M}|_{H_q}= \mathcal{M}(H_q)\equiv n\pmod{\left\lceil p^{e-(f-1)(q-2)}/q\right\rceil}.    
\end{align*}
It follows that
\begin{align}\label{divisibility inherits for overlap subspace}
\mathcal{M}(H'')=\mathcal{M}(H_q\cap H_1)=\mathcal{M}|_{H_q}(H_1)\equiv n\pmod{\left\lceil p^{e-(f-1)(q-2)}/q\right\rceil}.
\end{align}
Note that
\begin{align}\label{overlap of q+1 hyperplanes}
n=\#\mathcal{M}=\mathcal{M}(H_{u})+\sum_{i=1}^{q}\mathcal{M}(H_i)-q\mathcal{M}(H''). 
\end{align}
Combining congruences (\ref{divisibility of certain hyperplanes}), (\ref{divisibility inherits for overlap subspace}) and equality (\ref{overlap of q+1 hyperplanes}), we obtain 
\begin{align*}
\mathcal{M}(H_{u})\equiv n\pmod{p^{e-(f-1)(q-2)}}    
\end{align*}
which shows that any hyperplane will satisfy congruence (\ref{congruence equation 1}), and hence the conclusion is true for Griesmer codes of dimension $k$. This completes the proof of Theorem \ref{Second Theorem}.
\end{proof}

By the geometric argument in the proof of Theorem \ref{Second Theorem}, we can reduce Conjecture \ref{Conjecture 1} to the case when $k\le q-1$. In other words, we have the following result.

\begin{theorem}\label{reduction of Ward's conjecture}
If Conjecture \ref{Conjecture 1} holds for any $[g_q(k,d),k,d]_q$ Griesmer code with $k\le q-1$, then it will be true for all Griesmer codes over $\mathbb{F}_q$.
\end{theorem}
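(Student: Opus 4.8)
The plan is to prove Conjecture \ref{Conjecture 1} by induction on the dimension $k$, using the hypothesis (that the conjecture holds for all Griesmer codes of dimension $\le q-1$) as the base of the induction and then showing that its validity in all dimensions smaller than $k$ forces it in dimension $k$ once $k\ge q$. Throughout I take $e=\nu_p(d)$ and write $\Delta=\lceil p^{e-(f-1)}\rceil$ for the conjectured divisor; if $e\le f-1$ then $\Delta=1$ and nothing is to be proved, so I assume $e\ge f$, whence $\Delta=p^{e-f+1}$ and, crucially, $\Delta\mid p^e\mid d$. The engine of the inductive step is precisely the geometric pencil argument of Subcase 2-2 in the proof of Theorem \ref{Second Theorem}, run now with the larger modulus $\Delta$ in place of $p^{e-(f-1)(q-2)}$.

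Concretely, I would fix a generator matrix $G$ via Theorem \ref{A basis for Griesmer codes} so that any $k-1$ rows span a $[g_q(k-1,d),k-1,d]_q$ Griesmer subcode, and then, by Corollary \ref{congruence corollary}, aim for $\mathcal{M}(H)\equiv n\pmod{\Delta}$ for every hyperplane $H$. For a hyperplane $H_h$ whose defining vector $h$ has a zero entry, $hG$ lies in a subcode $C_i$ spanned by all rows but one; as $C_i$ is Griesmer of dimension $k-1$, the induction hypothesis gives $\Delta\mid\mathrm{wt}(hG)$, i.e.\ the desired congruence. A second automatic case, not needed in Theorem \ref{Second Theorem} but useful here, is a minimum-weight hyperplane $H$: there $\mathcal{M}(H)=n-d\equiv n\pmod{\Delta}$ since $\Delta\mid d$. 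It remains to handle a hyperplane $H_u$ with $u=(u_1,\dots,u_k)$ all-nonzero. For this I reproduce the two-equation construction: pairing $H_u$ with $\psi=(\gamma_1u_1,\dots,\gamma_{q-1}u_{q-1},0,\dots,0)$ produces a codimension-$2$ flat $H''$ lying on exactly $q+1$ hyperplanes $H_u,H_1,\dots,H_q$, and because $k\ge q$ each of $H_1,\dots,H_q$ has a zero coordinate and hence satisfies $\mathcal{M}(H_i)\equiv n\pmod{\Delta}$. The inclusion-exclusion identity $n=\mathcal{M}(H_u)+\sum_{i=1}^{q}\mathcal{M}(H_i)-q\,\mathcal{M}(H'')$ then reduces the whole problem to controlling $q\,\mathcal{M}(H'')$ modulo $\Delta$.

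At this point I split on $\Delta$ versus $q$. If $\Delta\le q$ (that is, $e\le 2f-1$), then $\Delta\mid q$, so $q\,\mathcal{M}(H'')\equiv0\equiv qn\pmod{\Delta}$ automatically and the identity yields $\mathcal{M}(H_u)\equiv n\pmod{\Delta}$ with no further input. If $\Delta>q$ (that is, $e\ge 2f$), I need the sharper congruence $\mathcal{M}(H'')\equiv n\pmod{\Delta/q}$. To get it I arrange $H_q=\ker\psi$ to be a minimum-weight hyperplane; then Lemma \ref{Grisemer code restricted to a constant weight hyperplane} identifies $\mathcal{M}|_{H_q}$ with the Griesmer multiset of $\mathrm{Res}(C,\psi G)$, a $[g_q(k-1,d/q),k-1,d/q]_q$ code whose distance has $p$-valuation $e-f$; by induction this multiset is $(\Delta/q)$-divisible, since $\Delta/q=p^{e-2f+1}=\lceil p^{(e-f)-(f-1)}\rceil$. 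Writing $H''=H_q\cap H_1$ and applying Corollary \ref{congruence corollary} inside $H_q$ gives $\mathcal{M}(H'')=\mathcal{M}|_{H_q}(H_1)\equiv\#\mathcal{M}|_{H_q}=\mathcal{M}(H_q)\equiv n\pmod{\Delta/q}$, so $q\,\mathcal{M}(H'')\equiv qn\pmod{\Delta}$ and the inclusion-exclusion closes the case.

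The hard part will be making $H_q$ a minimum-weight hyperplane in the regime $\Delta>q$. When $q^{q-2}\mid d$ this is free: Theorem \ref{A basis for Griesmer codes} makes the first $q-1$ rows span a constant-weight subcode, so $\psi G\in\mathrm{span}\{a_1,\dots,a_{q-1}\}$ has weight $d$, and at the same time the ratios $-\psi_i/u_i$ exhaust $\mathbb{F}_q$, which is exactly what puts a zero coordinate on every member of the pencil. For $q\ge 5$, however, there is an intermediate range $2f\le e<f(q-2)$ in which $\Delta>q$ yet no constant-weight subcode of size $q-1$ is available, so one cannot draw $\psi$ from it. The relaxation noted above helps—every minimum-weight hyperplane occurring in the pencil is automatically admissible because $\Delta\mid d$—but it does not by itself close the range, since a constant-weight subcode of size $m<q-1$ forces at most one extra minimum-weight member of the pencil while still leaving uncovered residues. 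The crux is therefore to produce, for each all-nonzero $H_u$, a minimum-weight codeword whose coordinate vector (after rescaling the basis so that $u=(1,\dots,1)$) realizes every element of $\mathbb{F}_q$; I expect this to require a finer analysis of the distribution of minimum-weight codewords of a Griesmer code, and it is the single step separating a complete proof from the clean case $q^{q-2}\mid d$.
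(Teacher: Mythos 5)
Your proposal follows exactly the route the paper intends: the paper's entire proof of this theorem is the single sentence that ``the geometric argument in the proof of Theorem \ref{Second Theorem}'' yields the reduction, i.e.\ one reruns the pencil argument of Subcase 2-2 with the modulus $\Delta=\left\lceil p^{e-(f-1)}\right\rceil$ in place of $p^{e-(f-1)(q-2)}$, which is precisely your plan. The regimes you do close are closed correctly: hyperplanes with a zero coefficient via the subcodes from Theorem \ref{A basis for Griesmer codes} and induction on $k$; the observation (not needed in the paper's Theorem \ref{Second Theorem} but needed here) that minimum-weight hyperplanes are automatically admissible because $\Delta\mid d$; the case $\Delta\le q$, where $q\,\mathcal{M}(H'')\equiv qn\equiv 0\pmod{\Delta}$ makes the correction term harmless; and the case $q^{q-2}\mid d$, where the constant-weight subcode makes $H_q$ minimum-weight, so Lemma \ref{Grisemer code restricted to a constant weight hyperplane} together with the induction hypothesis applied to the $[g_q(k-1,d/q),k-1,d/q]_q$ residual code gives $\mathcal{M}(H'')\equiv n\pmod{\Delta/q}$.

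The gap you flag in the intermediate range $2f\le e<f(q-2)$ is genuine, and you should be aware that the paper's own one-sentence proof does not fill it. In that range the only hyperplanes whose congruence is certified are those with a zero coefficient (induction) and the minimum-weight ones ($\Delta\mid d$); but the only mechanism available for certifying a minimum-weight member of the pencil is the constant-weight subcode of Theorem \ref{A basis for Griesmer codes}, which involves only the first $\lfloor e/f\rfloor+1<q-1$ basis rows, so for the remaining $q-2-\lfloor e/f\rfloor\ge 1$ values of $t$ the hyperplane $H_{\psi+tu}$ is again all-nonzero and uncontrolled, and the argument becomes circular. Nor can one fall back on Theorem \ref{First Theorem}, as Subcase 2-1 of the proof of Theorem \ref{Second Theorem} does for its weaker modulus: Theorem \ref{First Theorem} gives $p^{\lfloor e/f\rfloor}$, which dominates $p^{e-(f-1)(q-2)}$ throughout that range but is strictly smaller than $\Delta=p^{e-f+1}$ whenever $e>f$ and $f\ge 2$. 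Since the intermediate range is empty exactly when $q\le 4$ (and for $f=1$ Theorem \ref{First Theorem} already gives $\Delta$), the cited geometric argument proves the reduction only for prime $q$ and for $q=4$, where Conjecture \ref{Conjecture 1} is in any case known (Theorem \ref{Divisibility over F_p}, Theorem \ref{evidences}); for non-prime $q\ge 8$ the step you isolate --- producing, for each all-nonzero $u$, a minimum-weight codeword whose pencil covers the residues left over --- is exactly what is missing, both from your write-up and from the paper.
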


\section{Summary and Discussions}

\subsection{Summary}

Let $C$ be a $[g_q(k,d),k,d]_q$ Griesmer code where $q=p^f$. Suppose that $q^e|d$. In this paper, we have found a basis $\{a_1,a_2,\ldots,a_k\}$ of $C$ such that $\{a_1,a_2,\ldots,a_{\min\{e+1,k\}}\}$ span a constant weight code. Moreover, any $k-1$ of them span a $[g_q(k-1,d),k-1,d]_q$ Griesmer subcode. In 1998, Ward applied Theorem \ref{divisibility criterion} to prove Theorem \ref{Divisibility over F_p}. However, Ward only considered Griesmer codes over the prime field $\mathbb{F}_p$ (rather than over $\mathbb{F}_q$, where $q$ is a proper prime power). In the case where $q$ is a proper prime power, the $p$-adic valuation of the coefficients $c(r,k)$ becomes more difficult to analyze. We are able to overcome these difficulties and prove Theorem \ref{First Theorem}. Secondly, using this basis together with Theorem \ref{First Theorem}, we are able to prove Theorem \ref{Second Theorem} from the geometric point of view. 

%The idea behind the formulation of Theorem \ref{Second Theorem} is that the divisibility of Griesmer codes of dimension $k\ge q$ can be reduced to the divisibility of Griesmer codes of dimension less than $q$. On the other hand, by Theorem \ref{First %Theorem}, we know that $q^e|d$ implies $p^e$ is a divisor of $C$. The divisibility transition from $p^e|d$ to $p^{e-\delta} | \mathrm{wt}(c)$ for all nonzero $c \in C$ (where $\delta \geq 0$ quantifies the potential divisibility loss) can be bounded as follows: when %$q^{q-3}p^{f-1} \mid d$, the weight divisibility satisfies
%\begin{align*}
%p^{q-3} | \mathrm{wt}(c), \quad \forall c \in C ,
%\end{align*}
%with the maximal possible loss factor being 
%\begin{align*}
%\frac{q^{q-3}p^{f-1}}{p^{q-3}} = p^{(f-1)(q-2)}.
%\end{align*}

% Hence, from $p^e|d$ to $p^{?}|\mathrm{wt}(c)$ for all $c\in C$, the divisibility of the Griesmer code $C$ can lose at most \begin{align*}
% q^{q-3}p^{f-1}/p^{q-3}=p^{(f-1)(q-2)},     
% \end{align*}
% when the divisibility from $q^{q-3}p^{f-1}|d$ to $p^{q-3}|\mathrm{wt}(c)$ for all $c\in C$.

\subsection{Discussions}

Recall that Ward's conjecture \cite{ward2001divisible} can be restated as follows:

\begin{conjbis}{Conjecture 1}
Let $C$ be a $[g_q(k,d),k,d]_q$ Griesmer code where $q=p^f$. If $p^e|d$, then $\left\lceil p^{e-(f-1)} \right\rceil$ is a divisor of $C$.  
\end{conjbis}

By Theorem \ref{reduction of Ward's conjecture}, we reduce Ward's conjecture to the case when $k\le q-1$; so the problem is now simplified as follows. 

\begin{problem}
Prove or disprove Ward's conjecture for $k\le q-1$.    
\end{problem}

Previously, the best divisibility results were Theorem \ref{Divisibility over F_p} and Theorem \ref{evidences}. Currently, the state of the art is given by Theorem \ref{First Theorem}, Theorem \ref{Second Theorem} and Theorem 14 of \cite{ward2001divisible}. 

We believe that the truth is closer to Theorem \ref{First Theorem} and Theorem \ref{Second Theorem} than to Ward's conjecture; in other words, Ward's conjecture might be false. The evidence to support our guess is that in the proof of Theorem \ref{First Theorem}, the inequalities can hold as equalities for specific cases. 

% In the case when $q=4$, Ward's conjecture is true probably because it suffices to examine that the divisibility of all $[n,k,d]_q$ Griesmer codes in Ward's conjecture indeed holds for $k\le q-1=3$ (the dimension is small, also, we note that the classification of $[n,3,d]_q$ Griesmer codes is known if $q|d$ \cite{hill2007geometric}).

If we are trying to prove Ward's conjecture, the method in the proof of Theorem \ref{First Theorem} seems difficult to improve. 

On the other hand, in order to disprove Ward's conjecture, we need to find a counterexample. The authors suggest searching for a $[g_q(k,d),k,d]_q$ Griesmer code with parameters $k,d,q=p^f$ subject to 
\begin{align*}
q:&\quad\text{non-prime }q\ge 8,\\
k:&\quad 4\le k\le q-1,\\
d:&\quad f+1\le\nu_p(d)<\min\{f(q-2),f(k-1))\},    
\end{align*}
which may provide a counterexample to Ward's conjecture. It turns out that Griesmer codes satisfying the above conditions may not exist when the code parameters are relatively small. This fact presents a challenge to finding a counterexample. 

%In the end, we refer the interested readers to consult the following sources: \url{https://web.mat.upc.edu/simeon.michael.ball/codebounds.html} and \url{https://mars39.lomo.jp/opu/griesmer.htm}.

\section*{Acknowledgments}

The authors would like to thank Simeon Ball for valuable discussions and suggestions. We also thank Zhen Jia for many discussions.

\bibliographystyle{amsplain}
% \bib, bibdiv, biblist are defined by the amsrefs package.
\begin{bibdiv}
\begin{biblist}

\bib{ax1964zeroes}{article}{
      author={Ax, J.},
       title={Zeroes of polynomials over finite fields},
        date={1964},
        ISSN={0002-9327,1080-6377},
     journal={Amer. J. Math.},
      volume={86},
       pages={255\ndash 261},
         url={https://doi.org/10.2307/2373163},
}

\bib{belov1974conjecture}{incollection}{
      author={Belov, B.~I.},
       title={A conjecture on the {G}riesmer boundary},
        date={1974},
   booktitle={Optimization methods and their applications ({A}ll-{U}nion {S}ummer {S}em., {K}hakusy, {L}ake {B}aikal, 1972) ({R}ussian)},
   publisher={Akad. Nauk SSSR Sibirsk. Otdel., \`Energet. Inst., Irkutsk},
       pages={100\ndash 106, 182},
}

\bib{bonisoli1984every}{article}{
      author={Bonisoli, A.},
       title={Every equidistant linear code is a sequence of dual {H}amming codes},
        date={1984},
        ISSN={0381-7032},
     journal={Ars Combin.},
      volume={18},
       pages={181\ndash 186},
}

\bib{conway2013sphere}{book}{
      author={Conway, J.~H.},
      author={Sloane, N. J.~A.},
       title={Sphere packings, lattices and groups},
     edition={Third},
      series={Grundlehren der mathematischen Wissenschaften [Fundamental Principles of Mathematical Sciences]},
   publisher={Springer-Verlag, New York},
        date={1999},
      volume={290},
        ISBN={0-387-98585-9},
         url={https://doi.org/10.1007/978-1-4757-6568-7},
        note={With additional contributions by E. Bannai, R. E. Borcherds, J. Leech, S. P. Norton, A. M. Odlyzko, R. A. Parker, L. Queen and B. B. Venkov},
}

\bib{dodunekov1986optimal}{incollection}{
      author={Dodunekov, S.~M.},
       title={Optimal linear codes},
        date={1986},
   booktitle={Mathematics and mathematical education ({B}ulgarian) ({S}unny {B}each ({S}l\cdprime nchev {B}ryag), 1986)},
   publisher={Publ. House Bulgar. Acad. Sci., Sofia},
       pages={57\ndash 68},
}

\bib{dodunekov1984minimum}{article}{
      author={Dodunekov, S.~M.},
      author={Manev, N.~L.},
       title={Minimum possible block length of a linear binary code for some distances},
        date={1984},
        ISSN={0032-9460},
     journal={Problems Inform. Transmission},
      volume={20},
      number={1},
       pages={8\ndash 14},
}

\bib{griesmer1960bound}{article}{
      author={Griesmer, J.~H.},
       title={A bound for error-correcting codes},
        date={1960},
        ISSN={0018-8646},
     journal={IBM J. Res. Develop.},
      volume={4},
       pages={532\ndash 542},
         url={https://doi.org/10.1147/rd.45.0532},
}

\bib{kawabata2022nonexistence}{article}{
      author={Kawabata, D.},
      author={Maruta, T.},
       title={On the nonexistence of ternary linear codes attaining the {G}riesmer bound},
        date={2022},
        ISSN={0925-1022,1573-7586},
     journal={Des. Codes Cryptogr.},
      volume={90},
      number={4},
       pages={947\ndash 956},
         url={https://doi.org/10.1007/s10623-022-01021-7},
}

\bib{kummer1852erganzungssatze}{article}{
      author={Kummer, E.~E.},
       title={\"{U}ber die {E}rg\"anzungss\"atze zu den allgemeinen {R}eciprocit\"atsgesetzen},
        date={1852},
        ISSN={0075-4102,1435-5345},
     journal={J. Reine Angew. Math.},
      volume={44},
       pages={93\ndash 146},
         url={https://doi.org/10.1515/crll.1852.44.93},
}

\bib{kurz2021divisible}{article}{
      author={Kurz, S.},
       title={Divisible codes},
        date={2021},
     journal={arXiv:2112.11763},
}

\bib{Landjev2001}{incollection}{
      author={Landjev, I.~N.},
       title={The geometric approach to linear codes},
        date={2001},
   booktitle={Finite Geometries},
      series={Dev. Math.},
      volume={3},
   publisher={Kluwer Acad. Publ., Dordrecht},
       pages={247\ndash 256},
         url={https://doi.org/10.1007/978-1-4613-0283-4_14},
}

\bib{maruta2004nonexistence}{article}{
      author={Maruta, T.},
       title={The nonexistence of some ternary linear codes of dimension 6},
        date={2004},
        ISSN={0012-365X,1872-681X},
     journal={Discrete Math.},
      volume={288},
      number={1-3},
       pages={125\ndash 133},
         url={https://doi.org/10.1016/j.disc.2004.07.003},
}

\bib{solomon1965algebraically}{article}{
      author={Solomon, G.},
      author={Stiffler, J.~J.},
       title={Algebraically punctured cyclic codes},
        date={1965},
        ISSN={0019-9958,1878-2981},
     journal={Information and Control},
      volume={8},
       pages={170\ndash 179},
}

\bib{sun1995combinatorial}{article}{
      author={Sun, Z.~H.},
       title={The combinatorial sum {$\sum_{k\equiv r\pmod m}\binom nk$} and its applications in number theory. {III}},
        date={1995},
        ISSN={0469-5097},
     journal={Nanjing Daxue Xuebao Shuxue Bannian Kan},
      volume={12},
      number={1},
       pages={90\ndash 102},
}

\bib{sun2002sum}{article}{
      author={Sun, Z.~W.},
       title={On the sum {$\sum_{k\equiv r\pmod m}{n\choose k}$} and related congruences},
        date={2002},
        ISSN={0021-2172,1565-8511},
     journal={Israel J. Math.},
      volume={128},
       pages={135\ndash 156},
         url={https://doi.org/10.1007/BF02785421},
}

\bib{sun2007congruences}{article}{
      author={Sun, Z.~W.},
      author={Tauraso, R.},
       title={Congruences for sums of binomial coefficients},
        date={2007},
        ISSN={0022-314X,1096-1658},
     journal={J. Number Theory},
      volume={126},
      number={2},
       pages={287\ndash 296},
         url={https://doi.org/10.1016/j.jnt.2007.01.002},
}

\bib{ward1981divisible}{article}{
      author={Ward, H.~N.},
       title={Divisible codes},
        date={1981},
        ISSN={0003-889X,1420-8938},
     journal={Arch. Math. (Basel)},
      volume={36},
      number={6},
       pages={485\ndash 494},
         url={https://doi.org/10.1007/BF01223730},
}

\bib{ward1990weight}{article}{
      author={Ward, H.~N.},
       title={Weight polarization and divisibility},
        date={1990},
        ISSN={0012-365X,1872-681X},
     journal={Discrete Math.},
      volume={83},
      number={2-3},
       pages={315\ndash 326},
         url={https://doi.org/10.1016/0012-365X(90)90015-A},
}

\bib{ward1998divisibility}{article}{
      author={Ward, H.~N.},
       title={Divisibility of codes meeting the {G}riesmer bound},
        date={1998},
        ISSN={0097-3165,1096-0899},
     journal={J. Combin. Theory Ser. A},
      volume={83},
      number={1},
       pages={79\ndash 93},
         url={https://doi.org/10.1006/jcta.1997.2864},
}

\bib{ward2001divisible}{article}{
      author={Ward, H.~N.},
       title={Divisible codes -- a survey},
        date={2001},
        ISSN={1310-6600,2816-5297},
     journal={Serdica Math. J.},
      volume={27},
      number={4},
       pages={263\ndash 278},
}

\bib{ward2004sequence}{article}{
      author={Ward, H.~N.},
       title={A sequence of unique quaternary {G}riesmer codes},
        date={2004},
        ISSN={0925-1022,1573-7586},
     journal={Des. Codes Cryptogr.},
      volume={33},
      number={1},
       pages={71\ndash 85},
         url={https://doi.org/10.1023/B:DESI.0000032608.92853.95},
}

\bib{ward1996characters}{article}{
      author={Ward, H.~N.},
      author={Wood, J.~A.},
       title={Characters and the equivalence of codes},
        date={1996},
        ISSN={0097-3165,1096-0899},
     journal={J. Combin. Theory Ser. A},
      volume={73},
      number={2},
       pages={348\ndash 352},
         url={http://www.sciencedirect.com/science?_ob=GatewayURL&_origin=MR&_method=citationSearch&_piikey=S0097316596900278&_version=1&md5=83e4dc0525c7d984a2a7eb2032a67d87},
}

\end{biblist}
\end{bibdiv}

{\small
Department of Mathematics and National Center for Applied Mathematics Shenzhen, Southern University of Science and Technology, Shenzhen 518055, China 

\text{Email address:} {\href{mailto:12131225@mail.sustech.edu.cn}{\textcolor{black}{12131225@mail.sustech.edu.cn}}}

\vspace{0.2cm}
School of Cyber Science and Technology, Shandong
University, Qingdao 266000, China 

\text{Email address:} {\href{mailto:hexianghuang@foxmail.com}{\textcolor{black}{hexianghuang@foxmail.com}}}

\vspace{0.2cm}
Department of Mathematics and Shenzhen International Center of Mathematics, Southern University of \text{Science} and Technology, Shenzhen 518055, China

\text{Email address:} {\href{mailto:xiangq@sustech.edu.cn}{\textcolor{black}{xiangq@sustech.edu.cn}}}
}

\end{document}